\documentclass[11pt]{article}
\usepackage[T1]{fontenc}
\usepackage{lmodern}
\usepackage[utf8]{inputenc}
\usepackage[english]{babel}
\usepackage[a4paper,margin=27mm]{geometry}
\usepackage{amsmath,amsfonts,amssymb,amsthm,mathtools,bm,mathrsfs,cite}
\usepackage{graphicx,cases,booktabs}

\usepackage[dvipsnames]{xcolor}
\usepackage{tikz}
\usetikzlibrary{positioning,arrows.meta,decorations.pathreplacing,calc}
\usepackage[prependcaption,textsize=tiny,textwidth=3cm]{todonotes}
\usepackage{microtype}
\makeatletter
\def\thmhead@plain#1#2#3{%
  \thmname{#1}\thmnumber{\@ifnotempty{#1}{ }\@upn{#2}}%
  \thmnote{ {\the\thm@notefont#3}}}
\let\thmhead\thmhead@plain
\makeatother

\usepackage[hidelinks]{hyperref}
\hypersetup{colorlinks = true,
linkcolor = MidnightBlue, anchorcolor =red,
citecolor = ForestGreen,
filecolor = red,urlcolor = red,
            pdftitle={Periodic Fabry-Perot resonances}}
\usepackage{cleveref}

\newtheorem{theorem}{Theorem}[section]
\newtheorem{lemma}[theorem]{Lemma}
\newtheorem{proposition}[theorem]{Proposition}
\newtheorem{corollary}[theorem]{Corollary}

\numberwithin{equation}{section}

\theoremstyle{remark}
\newtheorem{remark}[theorem]{Remark}

\theoremstyle{definition}
\newtheorem{definition}[theorem]{Definition}
\newtheorem{example}[theorem]{Example}

\newcommand{\mr}[1]{{\color{red} #1}}

\newcommand{\mc}[1]{\mathcal{#1}}

\def\bb{\begin{equation}
  \left\{\ 
   \begin{aligned} }
\def\ee{   \end{aligned}
  \right.
  \end{equation}}

\def\mm{ \left[
 \begin{matrix}}
\def\nn{\end{matrix} \right] }

\def \dd{\mathrm{d}}

\def\d{\delta}

\def\s{\sigma}

\def \diag{\operatorname{diag}}

\def\re{\mathfrak{Re}}
\def\im{\mathfrak{Im}}

\def \sdiv {\sdiv_S}

\def \b{\overline}

\def \R{\mathbb{R}}

\def \C{\mathbb{C}}

\def \Z{\mathbb{Z}}

\def \e{\mathrm{e}}
\def \i{\mathrm{i}}

\def \L{\mathcal{L}}

\def \O{\mathcal{O}}

\begin{document}

\title{Frequency-dependent capacitance matrix formulation for Fabry--P\'erot resonances. Part II: One-dimensional periodic media}

\author{Habib Ammari\footnote{
  ETH Z\"urich, Department of Mathematics, Rämistrasse 101, 8092 Z\"urich, Switzerland, (habib.ammari@math.ethz.ch).} \and Bowen Li\footnote{Department of Mathematics,  City University of Hong Kong, Kowloon Tong, Hong Kong SAR, (bowen.li@cityu.edu.hk).}
\and Ping Liu\footnote{
School of Mathematical Sciences, Zhejiang University, Hangzhou, 310027, China, (pingliu@zju.edu.cn, shaoyingjie323@zju.edu.cn).}  \thanks{
Institute of
Fundamental and Transdisciplinary Research, Zhejiang University, Hangzhou, 310027, China.}
\and Yingjie Shao\footnotemark[3]
}

\date{}
\maketitle
\begin{abstract}
We study Bloch resonances of one-dimensional periodic systems of high-contrast resonators beyond the subwavelength regime. An exact transfer-matrix formulation identifies the limiting wavenumbers with the union of the interior Neumann frequencies and the Dirichlet frequencies of the separating intervals. At each fixed reference wavenumber, we introduce a periodic frequency-dependent capacitance matrix whose positive eigenvalues determine all leading square-root splittings in the contrast parameter. A direct cyclic matching expansion proves the approximation, including multiple eigenvalues and small complex contrasts. A spectral exclusion result outside the limiting set yields a direct bandgap statement. At a common resonance of all intervals, we also derive uniform eigenmode approximations and distinguish the zero-frequency mode from nonzero double Bloch eigenvalues. For positive contrast, exact linear crossings are characterized
by the monodromy matrix being proportional to the identity.
A complete local classification is obtained for two resonators
per cell and, through band folding, for fully symmetric arrays
with any number of resonators per cell. We further formulate the weighted Zak phase for isolated bands and its quantization under reflection symmetry. Explicit two-resonator formulas and numerical illustrations complement the analysis.
\end{abstract}
\noindent\textbf{Keywords:} high-contrast resonators; Fabry--P\'erot
resonances; quasiperiodic capacitance matrix; Bloch bands; Dirac point;
Zak phase.\\
\textbf{2020 Mathematics Subject Classification:} 35P05, 34B30, 35C20, 15A18.

\section{Introduction}\label{sec:introduction}
High-contrast resonator systems provide a setting in which the
spectral properties of a continuous wave equation can be related to
those of a finite-dimensional interaction matrix. In the
subwavelength regime, the generalized capacitance matrix describes
the leading resonant frequencies and the amplitudes of the
corresponding modes. This approach has led to rigorous descriptions
of bandgap opening, localization, and interface modes; see
\cite{cbms136,discrete2024,ammari2017subwavelength,ammari2023edge,
topological2020,ammari2024exponentially}. In one dimension, the
geometry also allows an exact propagation-matrix description,
which complements the asymptotic reduction
\cite{Subwavelength_1D_High-Contrast,pm1}.

The present paper continues the analysis of finite one-dimensional
systems in~\cite{fabry1} by considering an infinite periodic chain.
Replacing the outgoing condition by a Bloch condition changes both
the spectral problem and the coupling matrix. For positive material
parameters and real quasimomentum, the Bloch problem is
self-adjoint and its wavenumbers are real. The additional spacing
across the boundary of a period cell closes the interaction graph
into a cycle and introduces a Bloch phase into the corner entries
of the reduced matrix. These features are responsible for the
dispersion and exact band crossings studied below.

Our results are related to the frequency-dependent capacitance
formulation for two- and three-dimensional systems
in~\cite{fabry23}. That work develops reductions near fixed
nonzero reference frequencies, proves periodic bandgap opening under
a uniform exterior non-resonance condition, and establishes generic
Dirac cones for a honeycomb scaling family. In one dimension, the
separating intervals are bounded components of the exterior.
Their Dirichlet frequencies must therefore be included in the
limiting spectrum. The explicit interval geometry allows us to
resolve coincidences between these frequencies and the interior
Neumann frequencies, which produce square-root shifts. The
one-dimensional linear crossings considered here are closed gaps
of a scalar periodic problem, rather than two-dimensional Dirac cones.

For a cell containing $N$ resonators, we first derive an exact
dispersion equation $\operatorname{tr}A(k;\delta)=2\cos(\alpha L)$,
where $A$ is the transfer matrix for one period. A suitable
normalization has a limit equal to a product of $2N$ sine
functions. Thus, at a fixed limiting wavenumber $k_0$, the number
of nearby zeros, counted in the wavenumber plane, is the number
$n(k_0)$ of resonant intervals in the cell. This count includes
resonators and spacings, whose optical lengths need not be equal.

The leading splitting is governed by a cyclic matrix
$\mathcal C^\gamma(k_0)$. Its weights are determined by adjacent
resonant intervals, and its effective Bloch phase includes the signs
accumulated by standing waves at $k_0$. If $\lambda_j>0$ is an
eigenvalue of this matrix, two wavenumbers have the expansions
\[
 k_j^\pm(\delta)=k_0\pm\sqrt{\lambda_j\delta}+\O(\delta).
\]
All remaining branches lie within $\O(\delta)$ of $k_0$.
The proof uses a direct expansion over matchings of the cyclic
graph. This gives the coefficients of the first rescaled polynomial
without requiring higher Newton-polygon edges to be attained. The result is local at each fixed $k_0$ and fixed quasimomentum. We also prove a spectral exclusion result: on any compact set disjoint from the limiting set, no Bloch wavenumber exists for sufficiently small positive contrast, which yields a direct bandgap statement.

When every interval is resonant at a nonzero $k_0$, the leading
eigenmodes are described by coupled resonator cosine amplitudes and
spacing sine amplitudes. Their different contrast scalings are
essential. At the phase for which the reduced matrix has a zero
eigenvalue, $k_0$ remains an exact double Bloch wavenumber. The same
double zero in the wavenumber variable at $k_0=0$ instead corresponds
to the single constant mode. This distinction is necessary when
interpreting band crossings.

For positive contrast, we prove that an exact
Dirac point occurs precisely when the monodromy is $+I$ or $-I$
at a nonzero wavenumber. Positivity of a Gram matrix yields a
nonzero crossing slope. We then classify every nearby Dirac point
for $N=2$, including both points fixed at their reference frequency
and contrast-dependent points.
For completely symmetric configurations, all Dirac points
are further classified for arbitrary \(N\) through band folding.
For a cell consisting of repeated identical resonator--spacing
pairs, the latter are explained by folding the bands of a smaller
period. Their interpretation must therefore retain the chosen
period cell; see the general Floquet framework
in~\cite{kuchment2016overview} and the one-dimensional analysis
in~\cite{Lin2022Topological}.

Finally, we discuss the Zak phase of simple isolated bands
\cite{zak1989}, motivated by the SSH model \cite{ssh1979}
and related studies of subwavelength resonator chains
\cite{topological2020}.
We specify the material-weighted inner product,
the reciprocal-lattice endpoint identification, and the phase conventions for quasiperiodic and periodic
Bloch functions. Under reflection symmetry, we prove
that the Zak phase is quantized and determined by endpoint
reflection eigenvalues. Using asymptotic eigenmode profiles
to identify these eigenvalues, we classify the Zak phases
for symmetric two-resonator configurations.

Section~2 fixes the geometry and notation.
Section~\ref{sec:transfer} gives the exact dispersion relation.
Section~\ref{sec:capacitance} proves the capacitance approximation,
the spectral-exclusion result, and the eigenmode estimates.
Section~\ref{sec:dirac} characterizes and classifies exact crossings, and 
Section~\ref{sec: Zak phase} determines the Zak phases under reflection symmetry. 
Section~\ref{sec:conclusion} discusses the scope of the results and further questions. Appendix~\ref{app:asymptotics N2} collects explicit resonance
and eigenmode expansions for two resonators per cell.

\section{Problem formulation}
Let $N\in\Z_+$. A period cell contains disjoint resonators
$D_j=(x_j^-,x_j^+)$, $j=1,\ldots,N$, ordered from left to right.
Set $x_1^-=0,x_{N+1}^-=x_1^-+L$, and extend all interfaces and material
coefficients periodically. Write
\[
 \ell_j=x_j^+-x_j^->0,\qquad s_j=x_{j+1}^--x_j^+>0,
 \qquad L=\sum_{j=1}^N(\ell_j+s_j).
\]
The resonator set in the line is
$D=\bigcup_{n\in\mathbb Z}\bigcup_{j=1}^N(D_j+nL)$.
The exterior density and the bulk modulus are $\rho,\kappa>0$;
the corresponding interior parameters are $\rho_b,\kappa_j>0$.
We use
\begin{equation*}\label{eq:contrast}
 v_j=\sqrt{\kappa_j/\rho_b},\quad v=\sqrt{\kappa/\rho},\quad
 k=\omega/v,\quad k_j=r_jk,\quad
 r_j=v/v_j>0,\quad \delta=\rho_b/\rho.
\end{equation*}
The lengths and the ratios $r_j$ remain fixed as $\delta\to0$.
We also allow analytic continuation to complex $\delta\ne0$;
positivity is explicitly imposed for self-adjoint and Dirac-point results.
% The one-resonator case is recorded separately in Remark \ref{rem: N=1}.

For real quasimomentum $\alpha\in Y^*=(-\pi/L,\pi/L]$, the Bloch problem is
\begin{equation}\label{equ: Bloch problem}
\begin{cases}
 u''+k^2u=0&\text{in }(x_j^+,x_{j+1}^-),\quad j=1,\ldots,N,\\
 u''+r_j^2k^2u=0&\text{in }(x_j^-,x_j^+),\quad j=1,\ldots,N,\\
 u|_+=u|_-&\text{at each interface},\\
 u'_+(x_j^-)=\delta u'_-(x_j^-),\quad
 u'_-(x_j^+)=\delta u'_+(x_j^+)&j=1,\ldots,N,\\
 u(x_{N+1}^-)=\e^{\i\alpha L}u(x_1^-),&\\
 u'_-(x_{N+1}^-)=\e^{\i\alpha L}u'_-(x_1^-).&
\end{cases}
\end{equation}
Here, $w|_\pm(x)=\lim_{h\downarrow0}w(x\pm h)$ denote spatial
one-sided traces. They are not interior/exterior labels.
The final derivative condition uses the exterior trace at both ends.
The interface conditions then give the same Bloch condition for the
interior traces. Figure~\ref{fig:geometry} illustrates this convention.
\begin{figure}[htbp]
\centering
\begin{tikzpicture}[x=0.91cm,y=0.9cm,>=Stealth]
\draw[->] (-.5,0)--(13,0) node[right] {$x$};
\foreach \a/\b/\j in {0/1.7/1,3/4.5/2,8.2/10/N}{
 \filldraw[fill=blue!12,draw=blue!65!black] (\a,0) rectangle (\b,.75);
 \node at ({(\a+\b)/2},.38) {$D_{\j}$};
 \node[below] at (\a,0) {$x_{\j}^-$};
 \node[below] at (\b,0) {$x_{\j}^+$};
 \draw[<->] (\a,1.1)--node[above] {$\ell_{\j}$}(\b,1.1);
}
\draw[<->] (1.7,1.1)--node[above] {$s_1$}(3,1.1);
\draw[<->] (10,1.1)--node[above] {$s_N$}(12,1.1);
\node at (6.3,.4) {$\cdots$};
\draw[dashed] (12,-.1)--(12,.8);
\node[below] at (12,0) {$x_{N+1}^-$};
\draw[<->] (0,-1)--node[fill=white] {$L$, Bloch multiplier $\e^{\i\alpha L}$}(12,-1);
\end{tikzpicture}
\caption{A period cell containing $N$ resonators
$D_j=(x_j^-,x_j^+)$ of lengths $\ell_j$, separated by
spacings of lengths $s_j$. The last spacing connects
$D_N$ to the first resonator in the next cell.
The cell has length $L$ and Bloch multiplier $\e^{\i\alpha L}$.}
\label{fig:geometry}
\end{figure}
\begin{definition}
We call \(\omega\) a \textit{resonant frequency} (or \textit{resonance}) if (\ref{equ: Bloch problem}) admits a nontrivial solution \(u\), which we refer to as an \textit{eigenmode}. By a slight abuse of notation, we also call the corresponding wavenumber \(k=\omega/v\) a \textit{resonance} when $\omega$  is a resonant frequency. The dependence of these resonances on the Bloch parameter \(\alpha\in Y^*\), expressed as \(\alpha\mapsto k(\alpha)\), defines the \textit{Bloch dispersion relation}.    
\end{definition}

When $u$ is an eigenmode of (\ref{equ: Bloch problem}), it extends uniquely to an
\(\alpha\)-quasiperiodic function on \(\R\) by setting
\[
u(x+nL):=\e^{\i n\alpha L}u(x),
\qquad x\in[x_1^-,x_{N+1}^-),\quad n\in\Z.
\]
The Bloch boundary conditions ensure that this extension
satisfies the same differential equations and transmission
conditions throughout the periodically extended medium.
We use the same notation \(u\) for the extended eigenmode.

\section{Characterization of Bloch resonances}\label{sec:transfer}

We characterize the resonances of \((\ref{equ: Bloch problem})\). To express the transmission
conditions of
\((\ref{equ: Bloch problem})\) as continuity conditions, we define
\begin{align}\label{equ: V(x) def}
V(x):=
\begin{cases}
\bigl(u(x),\,\delta^{-1/2}u'(x)\bigr)^{\top},
&x\in(x_j^-,x_j^+),\\
\bigl(u(x),\,\delta^{1/2}u'(x)\bigr)^{\top},
&x\in(x_j^+,x_{j+1}^-),
\end{cases}
\qquad j=1,\ldots,N,
\end{align}
with a fixed branch of \(\delta^{1/2}\). The transmission conditions ensure that
the left and right limits of \(V\) agree at each resonator interface
\(x_j^\pm\). We assign \(V\) this common limiting value and denote
it by
\begin{align}\label{equ: V_j^pm def}
V_j^\pm:=V(x_j^\pm)
=V|_-(x_j^\pm)
=V|_+(x_j^\pm).
\end{align}
Thus $V\in C\bigl([x_1^-,x_{N+1}^-];\mathbb C^2\bigr)$. The differential equations in the first two lines of
\((\ref{equ: Bloch problem})\) give the propagation relations
\begin{align}\label{equ: V(x) propagation}
\begin{cases}
V(x+a)=T_j^{\mathrm{Res}}(a)V(x),
&x,x+a\in[x_j^-,x_j^+],\\
V(x+a)=T^{\mathrm{Sp}}(a)V(x),
&x,x+a\in[x_j^+,x_{j+1}^-],
\end{cases}
\qquad j=1,\ldots,N,
\end{align}
where, for \(k\ne0\),
\[
T_j^{\mathrm{Res}}(a)=
\begin{pmatrix}
\cos(r_jka) &
\dfrac{\delta^{1/2}}{r_jk}\sin(r_jka)\\
-\dfrac{r_jk}{\delta^{1/2}}\sin(r_jka) &
\cos(r_jka)
\end{pmatrix},
\]
\[
T^{\mathrm{Sp}}(a)=
\begin{pmatrix}
\cos(ka) &
\dfrac{1}{k\delta^{1/2}}\sin(ka)\\[2mm]
-k\delta^{1/2}\sin(ka) &
\cos(ka)
\end{pmatrix}.
\]
The apparent singularities at \(k=0\) are removable. At \(k=0\),
these matrices are defined by their limits:
\[
T_j^{\mathrm{Res}}(a)\big|_{k=0}
=
\begin{pmatrix}
1&\delta^{1/2}a\\
0&1
\end{pmatrix},
\qquad
T^{\mathrm{Sp}}(a)\big|_{k=0}
=
\begin{pmatrix}
1&\delta^{-1/2}a\\
0&1
\end{pmatrix}.
\]
These matrices also follow directly from the differential equations
at \(k=0\). Hence, \((\ref{equ: V(x) propagation})\) holds for every
\(k\in\mathbb C\). In particular,
\begin{align}\label{equ: V_j^pm propagation}
V_j^+=T_j^{\mathrm{Res}}V_j^-,
\qquad
V_{j+1}^-=T_j^{\mathrm{Sp}}V_j^+,
\qquad j=1,\ldots,N,
\end{align}
where
\begin{align}\label{equ: T_j Res Sp def}
T_j^{\mathrm{Res}}:=T_j^{\mathrm{Res}}(\ell_j),
\qquad
T_j^{\mathrm{Sp}}:=T^{\mathrm{Sp}}(s_j),
\qquad j=1,\ldots,N.
\end{align}
These propagation relations reduce the resonance problem to a
condition on the transfer matrix over one cell.

\begin{theorem}\label{thm: resonant conition original}
For \(k\in\mathbb C\), define
\begin{align}\label{equ: A(k;delta) def}
A(k;\delta)=
T_N^{\mathrm{Sp}}T_N^{\mathrm{Res}}\cdots
T_2^{\mathrm{Sp}}T_2^{\mathrm{Res}}
T_1^{\mathrm{Sp}}T_1^{\mathrm{Res}},\qquad
D(k;\delta)=\operatorname{tr}A(k;\delta).
\end{align}
Then \(k\) is a resonance of
\((\ref{equ: Bloch problem})\) if and only if $D(k;\delta)=2\cos(\alpha L)$.
\end{theorem}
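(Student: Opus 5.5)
The plan is to reduce the Bloch problem \eqref{equ: Bloch problem} to an eigenvalue problem for the one-cell transfer matrix $A(k;\delta)$, and then to use $\det A=1$ to convert the eigenvalue condition into a condition on the trace. Throughout, $\delta\neq0$ is fixed, so the rescaling $u'\mapsto\delta^{\pm1/2}u'$ in \eqref{equ: V(x) def} is invertible.

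\textbf{Step 1: solutions correspond to initial vectors.} Let $u$ solve the differential equations and transmission conditions on $[x_1^-,x_{N+1}^-]$, without yet imposing the Bloch conditions. By \eqref{equ: V_j^pm propagation}, iterating over $j=1,\dots,N$ gives
\[
V_{N+1}^-=A(k;\delta)V_1^- .
\]
Conversely, every vector $V_1^-\in\C^2$ generates such a solution, obtained by propagating with \eqref{equ: V(x) propagation}. This holds for all $k\in\C$, including $k=0$, where the removable-singularity limits of the $T$ matrices are used. By uniqueness for the second-order ODEs on each subinterval, together with the transmission conditions, $u\equiv0$ if and only if $V_1^-=0$. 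So nontrivial solutions correspond exactly to nonzero initial vectors $V_1^-$.

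\textbf{Step 2: the Bloch conditions become an eigenvector condition.} Both endpoints $x_1^-$ and $x_{N+1}^-$ are approached from the spacing side when using the exterior trace $u'_-$. By the convention in \eqref{equ: V_j^pm def}, $V_1^-=(u(x_1^-),\delta^{1/2}u'_-(x_1^-))^\top$, where the left trace at $x_1^-$ is understood through the periodic extension, and similarly for $V_{N+1}^-$. Both components therefore carry the same scaling factor. Hence the last two lines of \eqref{equ: Bloch problem} are equivalent to
\[
V_{N+1}^-=\e^{\i\alpha L}V_1^- .
\]
Combining this with Step 1, $k$ is a resonance if and only if $\mu:=\e^{\i\alpha L}$ is an eigenvalue of $A(k;\delta)$ with an eigenvector $V_1^-\neq0$. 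Equivalently, $\det(A-\mu I)=0$.

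\textbf{Step 3: the trace condition.} Each factor $T_j^{\mathrm{Res}}$ and $T_j^{\mathrm{Sp}}$ has determinant $\cos^2+\sin^2=1$, and this also holds at $k=0$. Therefore $\det A=1$, and the characteristic polynomial is $\mu^2-\mu\operatorname{tr}A+1$. Since $\mu\neq0$, dividing by $\mu$ shows that $\det(A-\mu I)=0$ if and only if
\[
\operatorname{tr}A=\mu+\mu^{-1}=2\cos(\alpha L).
\]
This is the claimed condition $D(k;\delta)=2\cos(\alpha L)$.

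\textbf{Main obstacle.} The only delicate point is the bookkeeping in Step 2. One must check that the derivative scaling at the two cell endpoints is the exterior one $\delta^{1/2}$ in both cases, so that the Bloch multiplier acts on the vector $V$ and not merely on $u$. This is why the paper fixes the convention of using exterior traces at both ends in \eqref{equ: Bloch problem}.
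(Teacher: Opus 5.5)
Your proof is correct and follows the paper's argument. Propagation gives $V_{N+1}^-=A(k;\delta)V_1^-$, the Bloch conditions make resonance equivalent to $\e^{\i\alpha L}$ being an eigenvalue of $A(k;\delta)$, and $\det A=1$ turns $\det(A-\e^{\i\alpha L}I)=0$ into $D(k;\delta)=2\cos(\alpha L)$. Your added remarks on the injectivity of $V_1^-\mapsto u$ and on the exterior-trace convention at the cell endpoints spell out points the paper leaves brief.
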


\begin{proof}
By \((\ref{equ: V_j^pm propagation})\), $V_{N+1}^-=A(k;\delta)V_1^-$. The Bloch conditions in the last line of
\((\ref{equ: Bloch problem})\) give $V_{N+1}^-=\e^{\i\alpha L}V_1^-$. Conversely, any nonzero initial vector \(V_1^-\) that satisfies both
relations determines, by propagation through the cell, a nontrivial
solution of (\ref{equ: Bloch problem}). Thus, \(k\) is a resonance if and only if
\(\e^{\i\alpha L}\) is an eigenvalue of \(A(k;\delta)\).

Since each $T_j^{\text{Res}}$ and $T_j^{\text{Sp}}$ has determinant one, so does $A(k;\delta)$. Thus, \(\e^{\i\alpha L}\) is an eigenvalue of \(A(k;\delta)\) if and only if 
\[
\det\!\left(A(k;\delta)-\e^{\i\alpha L}I\right)
=
\e^{2\i\alpha L}
-D(k;\delta)\e^{\i\alpha L}+1
\]
vanishes. This determinant vanishes if and only if $D(k;\delta)=2\cos(\alpha L)$.
\end{proof}
\begin{example}\label{ex: k=0}
Using Theorem \ref{thm: resonant conition original}, the zero-frequency case can be described explicitly. When $\textstyle\delta\ne-(\sum_{j=1}^N s_j)/(\sum_{j=1}^N\ell_j)$, the propagation
matrices at \(k=0\) give
\[
A(0;\delta)
=
\begin{pmatrix}
1&S_\delta\\
0&1
\end{pmatrix},
\qquad
S_\delta
=
\sum_{j=1}^{N}
\bigl(\delta^{1/2}\ell_j+\delta^{-1/2}s_j\bigr)\ne0.
\]
Consequently, using \Cref{thm: resonant conition original}, \(k=0\) is a resonance if and only if
\(\e^{\i\alpha L}=1\). In that case,
\[
\operatorname{Ker}\bigl(A(0;\delta)-I\bigr)
=
\operatorname{span}\left\{(1,0)^\top\right\},
\]
so its eigenmodes are precisely the nonzero constant functions.    
\end{example}

The preceding theorem characterizes the resonances through the
propagation matrices. For a real Bloch parameter, positivity of the
coefficients also implies that every resonance is real.

\begin{proposition}\label{thm: k real}
Assume that \(\delta>0\) and
\(\alpha\in Y^*\). Then every resonance \(k\) of
\((\ref{equ: Bloch problem})\) is real.
\end{proposition}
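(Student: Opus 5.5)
We will prove the statement by an energy (Rayleigh-quotient) identity for the eigenmode, using the weighted form in which the transmission conditions become natural.

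Let $u$ be a nontrivial eigenmode for some $k\in\C$. Define the piecewise constant weights $a(x)=1$ in the spacings and $a(x)=\delta$ in the resonators $D_j$. Define $b(x)=1$ in the spacings and $b(x)=\delta r_j^2$ in $D_j$. The equation in each interval can then be written as $(au')'+k^2 b u=0$. Indeed, inside $D_j$ we have $\delta u''+\delta r_j^2k^2u=0$. The transmission conditions $u'_+(x_j^-)=\delta u'_-(x_j^-)$ and $u'_-(x_j^+)=\delta u'_+(x_j^+)$ say exactly that $au'$ is continuous across every interface, and $u$ is continuous there as well.

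Multiply the equation on each subinterval of the cell $[x_1^-,x_{N+1}^-]$ by $\overline{u}$, integrate by parts, and sum. All interior boundary terms cancel because $u$ and $au'$ are continuous. This gives
\[
\int_{x_1^-}^{x_{N+1}^-} a|u'|^2\,\dd x - k^2\int_{x_1^-}^{x_{N+1}^-} b|u|^2\,\dd x
= \Bigl[a u'\,\overline{u}\Bigr]_{x_1^-}^{x_{N+1}^-}.
\]
At both ends of the cell we use the exterior trace, where $a=1$. The Bloch conditions give $u(x_{N+1}^-)=\e^{\i\alpha L}u(x_1^-)$ and $u'_-(x_{N+1}^-)=\e^{\i\alpha L}u'_-(x_1^-)$. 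Because $\alpha$ is real, $|\e^{\i\alpha L}|^2=1$, so the two boundary terms coincide and the right-hand side vanishes. The consistency of the trace convention was noted after \eqref{equ: Bloch problem}: the right end uses the left trace in spacing $s_N$, and the left end is continued periodically.

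Since $\delta>0$, we have $a,b>0$, and therefore
\[
k^2\int b|u|^2 = \int a|u'|^2\ge 0.
\]
The integral $\int b|u|^2$ is positive because $u\not\equiv0$: a solution vanishing on the cell is trivial. Hence $k^2\ge0$ is real and nonnegative, so $k$ is real.

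The only delicate step is the bookkeeping of one-sided traces at the cell ends and at the interfaces. We must make sure that the boundary term really reduces to $|\e^{\i\alpha L}|^2-1=0$ times $u'_-(x_1^-)\overline{u(x_1^-)}$, and this is where the reality of $\alpha$ enters.
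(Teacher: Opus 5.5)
Your plan is the same energy identity the paper uses. The paper writes it for the continuous vector $V=(v_1,v_2)^\top$ as $\frac{\mathrm d}{\mathrm dx}(\overline{v_1}v_2)=a|v_2|^2-k^2b|v_1|^2$, which is your integration by parts in first-order form. However, as written your weights are inverted, so the step ``all interior boundary terms cancel'' fails.

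The traces in \eqref{equ: Bloch problem} are spatial, not interior/exterior labels. At $x_j^-$ the right trace $u'_+(x_j^-)$ lies in $D_j$, and at $x_j^+$ the left trace $u'_-(x_j^+)$ lies in $D_j$. Hence both transmission conditions say $u'|_{\mathrm{res}}=\delta\,u'|_{\mathrm{sp}}$. With your choice $a=\delta$ on $D_j$, continuity of $au'$ would instead mean $u'|_{\mathrm{sp}}=\delta\,u'|_{\mathrm{res}}$. Concretely, at an interior interface $x_j^-$ the two boundary contributions add up to
$u'_-(x_j^-)\overline{u(x_j^-)}-\delta u'_+(x_j^-)\overline{u(x_j^-)}=(1-\delta^2)u'_-(x_j^-)\overline{u(x_j^-)}$, which does not vanish in general. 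The same mismatch affects your left endpoint term. The cell starts with the resonator $D_1$, so the flux at $x_1^-$ is evaluated on the resonator side. With your weight it equals $\delta^2u'_-(x_1^-)$ rather than the exterior trace $u'_-(x_1^-)$.

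The repair is immediate: take $a=\delta^{-1}$ and $b=r_j^2\delta^{-1}$ on $D_j$, and $a=b=1$ on the spacings. Then:
\begin{itemize}
\item $(au')'+k^2bu=0$ holds on every interval;
\item $au'$ is genuinely continuous at every interface;
\item the resonator-side flux at $x_1^-$ equals $u'_-(x_1^-)$;
\item the boundary term is $(|\e^{\i\alpha L}|^2-1)\,u'_-(x_1^-)\overline{u(x_1^-)}=0$.
\end{itemize}
These weights are still positive for $\delta>0$, so the rest of your argument goes through unchanged: $\int b|u|^2>0$, hence $k^2=\int a|u'|^2/\int b|u|^2\ge0$. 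Up to the overall factor $\delta^{-1/2}$, the corrected weights are exactly the paper's $a^{-1}$ and $b$, and the resulting proof coincides with the paper's.
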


\begin{proof}
Define the piecewise constant functions \(a,b\) on the cell by
\[
a(x)=
\begin{cases}
\delta^{1/2}, & x\in(x_j^-,x_j^+),\\
\delta^{-1/2}, & x\in(x_j^+,x_{j+1}^-),
\end{cases}
\qquad
b(x)=
\begin{cases}
r_j^2\delta^{-1/2}, & x\in(x_j^-,x_j^+),\\
\delta^{1/2}, & x\in(x_j^+,x_{j+1}^-),
\end{cases}
\qquad j=1,\ldots,N.
\]
Thus, \(a,b>0\), and the continuous vector \(V\) satisfies
\begin{align}\label{equ: V(x) diff equ}
V'=K(x,k)V,
\qquad
K(x,k)=
\begin{pmatrix}
0&a(x)\\
-k^2b(x)&0
\end{pmatrix}
\end{align}
on each subinterval. Write \(V=(v_1,v_2)^\top\). Then $v_1'=av_2,v_2'=-k^2bv_1$ with $a,b>0$. Hence,
\[
\frac{\mathrm d}{\mathrm dx}
\bigl(\overline{v_1}v_2\bigr)
=
a|v_2|^2-k^2b|v_1|^2.
\]
Integrating from $x_1^-$ to $x_{N+1}^-$ and using
the Bloch condition gives
\[
k^2\int_{x_1^-}^{x_{N+1}^-}b|v_1|^2\,\mathrm dx
=
\int_{x_1^-}^{x_{N+1}^-}a|v_2|^2\,\mathrm dx.
\]
The integral on the left without \(k^2\) is strictly
positive. Otherwise, \(b>0\) would imply \(v_1\equiv0\),
and \(v_1'=av_2\), with \(a>0\), would then imply
\(v_2\equiv0\), contradicting the nontriviality of \(V\).
Since the integral on the right is real and nonnegative,
\(k^2\) is also real and nonnegative. Therefore
\(k\in\mathbb R\).
\end{proof}

\begin{remark}
The reality of $k$ in Proposition \ref{thm: k real} is a consequence of the Bloch condition, which causes the boundary term to vanish. In contrast, for finite scattering problems with an outgoing radiation
condition, the corresponding boundary term is nonzero and represents energy leakage. Hence, the
resonances are generally complex and typically have a negative imaginary part with the time convention $\e^{-\i\omega t}$, see \cite[Theorem
2.2(2)]{pm1}.
\end{remark}
\section{Capacitance matrix approximation}\label{sec:capacitance}
In this section, we derive a capacitance matrix approximation
for the Bloch resonances of (\ref{equ: Bloch problem}) as \(\delta\to0\). Starting from the exact dispersion relation in Theorem \ref{thm: resonant conition original},
we obtain a normalized equation and show that the leading
local resonance splittings are governed by the periodic frequency-dependent capacitance matrix \(\mathcal C^\gamma(k_0)\). Section \ref{subsec: resonances} establishes the resonance expansions near
a fixed limiting wavenumber \(k_0\in E\), including the
square-root splittings and the remaining \(\mathcal O(\delta)\) branches. A spectral exclusion result on compact sets outside \(E\)
is also obtained. Section \ref{subsec: eigenmodes} describes the corresponding eigenmodes when all intervals are resonant at \(k_0\), that is,
\(n(k_0)=2N\).
\subsection{Resonances}\label{subsec: resonances}
We begin with the resonance asymptotics as $\delta\to0$. To facilitate the subsequent analysis and computation, we introduce the following auxiliary matrices. Let  
\begin{align} \label{def:rlmatrix}
    \sigma_i=\frac{\d}{r_i},1\leq i\leq N,\quad R(z) := \frac{1}{2}\begin{pmatrix}
        {1+ z} & {1-z}\\
        {1- z} & {1+z}
    \end{pmatrix},
    & \quad
    L(z) := \begin{pmatrix}
         \e^{\i z} & 0\\
        0 & \e^{-\i z}
    \end{pmatrix}.
\end{align}
Define the vector  
\begin{equation} \label{def:vectort}
    \mathfrak{\bm t}:=(r_1\ell_1,\ s_{1},\ r_2\ell_{2},\ s_{2},\ \cdots,\ r_{N-1}\ell_{N-1},\ s_{N-1},\ r_N\ell_N,s_N)^\top\in\R^{2N}_{>0},
\end{equation}
whose \(j\)-th component is denoted by \(\mathfrak t_j\), and set  
\[
M_{\mathrm{tot}}(k;\delta)
=
\prod_{i=N}^1
L(\mathfrak t_{2i}k)R(\sigma_i^{-1})
L(\mathfrak t_{2i-1}k)R(\sigma_i),
\]
with the product ordered as \(i=N,\dots,1\). For \(k\ne0\), let \(P\) have columns
\((\delta^{-1/2},\i k)^\top\) and
\((\delta^{-1/2},-\i k)^\top\).
Recalling \eqref{equ: A(k;delta) def}, a direct computation gives
\[
P^{-1}T_i^{\text{Sp}}T_i^{\text{Res}}P
=L(\mathfrak{t}_{2i}k)R(\sigma_i^{-1})
 L(\mathfrak{t}_{2i-1}k)R(\sigma_i),
\]
and thus, $A(k;\delta)$ and $M_{\text{tot}}(k;\delta)$ are similar. Consequently, they have the same trace:
\[
D(k;\delta)=
\operatorname{tr}M_{\mathrm{tot}}(k;\delta),\quad k\ne0.
\]
Both sides are analytic functions of \(k\) since the propagation matrices
on the left are understood at \(k=0\) by their removable extensions.
Thus, the above equation also holds at $k=0$.
Let
\begin{align}\label{eq: f(z; sigma) expansion}
f(k;\delta)=\operatorname{tr}\left(\prod_{i=1}^N\frac{4\sigma_i}{(1+\sigma_i)^{2}}M_{\text{tot}}(k;\delta)\right)-\prod_{i=1}^N\frac{4\sigma_i}{(1+\sigma_i)^{2}}\cdot2\cos(\alpha L)
\end{align}
be analytic in $k$. In this section, we restrict $|\delta|<\min_j r_j$,
so the normalizing factors have no poles; for $\delta\ne0$ they are nonzero.
Theorem \ref{thm: resonant conition original} implies that $k\in\C$ is a resonance of (\ref{equ: Bloch problem}) if and only if  $k$ is a zero of $f(k;\delta)$.
This equivalent formulation will be used in the subsequent analysis. From definition (\ref{def:rlmatrix}) we obtain
\begin{align*}
\lim_{\sigma\to0}\frac{2}{1+\s}R(\s)=R_+:=\begin{pmatrix}
    1&1\\1&1
\end{pmatrix},\quad \lim_{\sigma\to0}\frac{2\s}{1+\s}R\left(\frac{1}{\s}\right)=R_-:=\begin{pmatrix}
    1&-1\\-1&1
\end{pmatrix}.
\end{align*}
Thus, as $\delta\to0$, we have $\sigma_i\to0,1\leq i\leq N$. Therefore, the matrix $\prod_{i=1}^N\frac{4\sigma_i}{(1+\sigma_i)^{2}}M_{\text{tot}}(k;\delta)$ uniformly converges to
\begin{align*}
&L(\mathfrak{t}_{2N}k)R_-L(\mathfrak{t}_{2N-1}k)R_+L(\mathfrak{t}_{2N-2}k)R_-\cdots R_-L(\mathfrak{t}_1k)R_+\\=&(2\i)^{2N-1}\prod_{j=1}^{2N-1}\sin (\mathfrak{t}_jk)\cdot\begin{pmatrix}
    \e^{\i \mathfrak{t}_{2N}k}&\e^{\i \mathfrak{t}_{2N}k}\\-\e^{-\i \mathfrak{t}_{2N}k}&-\e^{-\i \mathfrak{t}_{2N}k}.
\end{pmatrix},
\end{align*}
on any compact set $K\subset\C$. 
We then have the following lemma.
\begin{lemma}\label{thm: f(k;0) form}
When $\delta \to 0$, the analytic function $f(k;\delta)$ in \eqref{eq: f(z; sigma) expansion} uniformly converges to
\begin{align*}
    f(k;0)=(2\i)^{2N}\prod_{j=1}^{2N}\sin (\mathfrak{t}_j k).
\end{align*}
on any compact set $K\subset\C$. Then $k_0\in \C$ is a zero of $f(\cdot,0)$ if and only if $k_0\in E: = \cup_{j=1}^{2N}(\pi \Z/\mathfrak{t}_j)$. When $k_0\in E$, $k_0$ is a zero of order $n$, where
\begin{align}\label{equ: I def}
n=n(k_0):=\# I,\quad I=I(k_0):=\left\{j\middle|\mathfrak{t}_j k_0\in \pi\Z,1\leq j\leq2N\right\}.
\end{align}
In particular, all zeros of $f(\cdot;0)$ are real and $0$ is a zero of order $2N$.
\end{lemma}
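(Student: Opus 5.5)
We take the trace in the limit already displayed before the lemma and handle the constant term separately; the zero-set statements then follow from elementary properties of the sine. The displayed uniform limit of $\prod_i\frac{4\sigma_i}{(1+\sigma_i)^2}M_{\mathrm{tot}}(k;\delta)$ rests on the identity
\[
\frac{4\sigma}{(1+\sigma)^2}\,R(\sigma^{-1})\,X\,R(\sigma)=\Bigl[\tfrac{2\sigma}{1+\sigma}R(\sigma^{-1})\Bigr]X\Bigl[\tfrac{2}{1+\sigma}R(\sigma)\Bigr]
\]
applied factor by factor. Each bracket is a polynomial in $\sigma$ divided by $1+\sigma$. Since $|\sigma_i|\le|\delta|/r_i<1$, the brackets converge to $R_\pm$ as $\delta\to0$, uniformly in $k$. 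The entries of $L(\mathfrak t_j k)$ are entire in $k$, hence bounded on compact $K$, so the product converges uniformly on $K$. The second term of $f$ is $\prod_i\frac{4\sigma_i}{(1+\sigma_i)^2}\cdot 2\cos(\alpha L)$, a constant in $k$ that tends to $0$. So $f(k;\delta)\to\operatorname{tr}$ of the limit matrix, uniformly on $K$.

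For the trace, the paper states the closed form of the limit product; I would verify it by induction. The key observations are that $R_\pm$ are rank one, $R_+=(1,1)^\top(1,1)$ and $R_-=(1,-1)^\top(1,-1)$, and that
\[
(1,1)\,L(t)\,(1,-1)^\top=2\i\sin t,\qquad (1,-1)\,L(t)\,(1,1)^\top=2\i\sin t.
\]
Writing the product as
\[
L(\mathfrak t_{2N}k)\,(1,-1)^\top\,\Bigl[\prod_{j=1}^{2N-1}2\i\sin(\mathfrak t_jk)\Bigr]\,(1,1),
\]
each interior contraction between consecutive rank-one factors contributes one $2\i\sin(\mathfrak t_jk)$, for $j=1,\dots,2N-1$. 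The trace of this rank-one matrix is then
\[
(1,1)\,L(\mathfrak t_{2N}k)\,(1,-1)^\top=\e^{\i\mathfrak t_{2N}k}-\e^{-\i\mathfrak t_{2N}k}=2\i\sin(\mathfrak t_{2N}k).
\]
This gives $f(k;0)=(2\i)^{2N}\prod_{j=1}^{2N}\sin(\mathfrak t_jk)$.

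For the zeros, $\sin z=0$ exactly when $z\in\pi\Z$, and all these zeros are simple, including for complex $z$, since $\sin(x+\i y)=0$ forces $y=0$. So $\sin(\mathfrak t_jk)$ vanishes at $k_0$ iff $\mathfrak t_jk_0\in\pi\Z$, and there it has a simple zero because $\mathfrak t_j>0$. It follows that $f(\cdot;0)$ vanishes iff $k_0\in E$, and its order at $k_0$ is the number of vanishing factors, namely $n(k_0)=\#I(k_0)$. All zeros are real since every $\mathfrak t_j$ is real. At $k_0=0$ every index lies in $I$, so the order is $2N$. The only step needing care is the rank-one bookkeeping of signs in the trace, and the paper's displayed formula already supplies it.
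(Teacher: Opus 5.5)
Your proposal is correct and matches the paper's argument: factor-by-factor normalization to obtain $R_\pm$, evaluation of the rank-one limiting product with trace $(2\mathrm{i})^{2N}\prod_{j}\sin(\mathfrak t_j k)$, vanishing of the $\cos(\alpha L)$ term, and simplicity of each sine zero. Your explicit contraction with $(1,1)$ and $(1,-1)$ simply writes out the product identity that the paper displays without proof just before the lemma.
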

Since $f(k;\delta)$ uniformly converges to $f(k;0)$ on any compact set as $\delta\to0$, Rouch\'e's Theorem implies that there are exactly $n(k)$ zeros of $f(\cdot;\delta)$ in a small neighborhood of $k$ for every $k\in E$ when $|\delta|$ is small enough. This shows that (\ref{equ: Bloch problem}) has exactly $n(k)$ resonances near $k$ for every $k\in E$ when $\d\to0$. Fix a $k_0\in E$, our next target is to derive the asymptotic form of the $n(k_0)$ zeros of $f(k;\delta)$ near $k_0\in E$ when $\delta\to 0$. 

According to (\ref{equ: I def}), $j \in I$ if and only if there exists $m_j \in \mathbb{Z}$ such that 
\begin{equation}\label{equ:defmj}
\mathfrak{t}_j k_0 = m_j \pi.
\end{equation}
Similar to the setting in \cite{fabry1}, we set
\begin{equation} \label{def:vector bm t}
    {\bm t}:=(r_1^2\ell_1,\ s_{1},\ r_2^2\ell_{2},\ s_{2},\ \cdots,\ r_{N-1}^2\ell_{N-1},\ s_{N-1},\ r_N^2\ell_N,s_N)^\top\in\R^{2N}_{>0}
\end{equation}
and use $t_j$ to denote the $j$\textsuperscript{th} component of ${\bm t}$. We then define a wavenumber-dependent parameter 
\begin{align}\label{equ: t_j(k) def}
t_j(k_0) = 
\begin{cases}
    t_j, & \text{if } \mathfrak{t}_j k_0\in\pi\mathbb Z\text{ (or equivalently $j\in I$)}, \\
    \infty, & \text{otherwise},
\end{cases}
\end{align}
with the convention that $1/\infty = 0$. Based on this definition, we introduce the coupling coefficients
\begin{equation} \label{eq:coupling}
     \theta_j = \frac{1}{t_j t_{j+1}}, \quad \theta_j(k_0) = \frac{1}{t_j(k_0) t_{j+1}(k_0)}, \ 1 \leq j \leq 2N. 
\end{equation}
with the convention that $t_{2N+1}=t_{1},t_{2N+1}(k_0)=t_{1}(k_0)$.
For real $\gamma$, define the periodic frequency-dependent capacitance
matrix by its action:
\begin{equation}\label{eq:mathcalCkgamma_def}
 (\mathcal C^\gamma(k_0)q)_i
 =\theta_{2i-2}(k_0)(q_i-q_{i-1})
  +\theta_{2i-1}(k_0)(q_i-q_{i+1}),\qquad 1\leq i\leq N,
\end{equation}
where $q_0=\e^{-\i\gamma L}q_N$, $q_{N+1}=\e^{\i\gamma L}q_1$,
and $\theta_0=\theta_{2N}$. Contributions to the same entry are
\emph{added}; in particular, for $N=2$ the internal and wrap-around
off-diagonal terms both contribute. Equivalently,
\[
 \mathcal C^\gamma(k_0)=\mathcal C(k_0)
 -\theta_{2N}(k_0)\e^{-\i\gamma L}E_{1N}
 -\theta_{2N-1}(k_0)\e^{\i\gamma L}E_{N1},
\]
where $\mathcal C(k_0)$ has diagonal
$\theta_{2i-2}(k_0)+\theta_{2i-1}(k_0)$, upper diagonal
$-\theta_{2i-1}(k_0)$, and lower diagonal $-\theta_{2i}(k_0)$.
% \begin{remark}
% Although we write \(n = n(k_0),m = m(k_0),t_j(k_0),\theta_j(k_0)\) in the periodic system to emphasize their dependence on \(k_0 \in E\), all these quantities are in fact completely determined by \(I = I(k_0)\), see (\ref{equ: I def})(\ref{def:m_periodic})(\ref{equ: t_j(k) def}). Consequently, whenever convenient we may also denote them by \(n = n(I),m = m(I),t_j(I),\theta_j(I)\) without ambiguity. For the same reason, we may write $\tilde{n} = \tilde{n}(\tilde{I})$ and $\tilde{m} = \tilde{m}(\tilde{I})$ in the finite system.
% \end{remark}
\begin{remark}\label{rem:reference-matrix}
The dependence on $k_0$ in \eqref{eq:mathcalCkgamma_def}
is through the resonant set $I(k_0)$.
The matrix is associated with a fixed reference wavenumber
rather than an analytic function of a continuously varying
frequency. At $k_0=0$, it is the generalized quasiperiodic
capacitance matrix introduced in \cite{ammari2023edge}. At a nonzero coincidence of interior
Neumann and spacing Dirichlet frequencies, it describes
the leading singular coupling.
%All proofs below allow different $r_j$ and derive the
%matching expansion directly.
\end{remark}

Fix \(k_0\in E\). Regard \(I=I(k_0)\) as a subset of
\[
\Omega = \mathbb{Z}/(2N\mathbb{Z}) = \{1,2,\dots,2N\},
\]
where the representatives are chosen as the standard set $\{1,2,\dots,2N\}$. On $\Omega$ we define the cyclic distance $d(x,y) = \min_{k \in \mathbb{Z}} \bigl| (x - y) + 2Nk \bigr|$. It is straightforward to verify that $d$ is a metric, turning $(\Omega,d)$ into a finite metric space.
To describe the connected components of $I$ in the cyclic sense, on $I$ we define an equivalence relation $\sim$ by  
\[
x \sim y \;\Longleftrightarrow\; \exists s\geq0,\{v_i\}_{i=0}^s\subset I \text{ such that }v_0=x,v_s=y,d(v_i,v_{i-1})=1,1\leq i\leq s.
\]
The equivalence classes $\mathcal{I}_1, \dots, \mathcal{I}_p$ of $\sim$ are called the cyclic maximal connected components of $I$. Each $\mathcal{I}_j$ is a maximal subset of $I$ whose elements form an uninterrupted chain under the cyclic adjacency. Let $n_j = \#\mathcal{I}_j$ so that $\sum_{j=1}^p n_j = n = \#I$. Define
\begin{equation}\label{def:m_periodic}
m = m(k_0) := \sum_{j=1}^{p} \left\lfloor \frac{n_j}{2} \right\rfloor.
\end{equation}
For \(l\ge1\), set
\[
\Omega_l=
\left\{
(j_1,\dots,j_l)\in\Omega^l:
j_1<\cdots<j_l,\;
d(j_a,j_b)\ge2\ \text{for }a\ne b
\right\},
\]
where the ordering uses the chosen representatives.
Define
\begin{equation}\label{equ: C^per_I,l def}
C^{\text{per}}_{I,l}
=
\sum_{\substack{
(j_1,\dots,j_l)\in\Omega_l\\
\{j_i,j_i+1\}\subset I,\;1\le i\le l}}
\prod_{i=1}^l\theta_{j_i},
\qquad
C^{\text{per}}_{I,0}=1,\quad
C^{\text{per}}_{I,-1}=0.
\end{equation}
Here \(j_i+1\) is understood modulo \(2N\).
The selected pairs are disjoint edges within the cyclic
components of \(I\), so their number cannot exceed \(m\).
Consequently, \(C^{\text{per}}_{I,l}=0\) for \(l>m\),
with empty sums interpreted as zero. These coefficients yield the following spectral result
for the periodic capacitance matrix
\(\mc C^\gamma(k_0)\) defined in
\eqref{eq:mathcalCkgamma_def}.

\begin{lemma}[Spectrum of the periodic frequency-dependent capacitance matrix]\label{lem: spec of capacitance matrix}
 The characteristic polynomial of the $N\times N$ matrix $\mathcal{C}^{\gamma}(k_0)$ in (\ref{eq:mathcalCkgamma_def}) satisfies
\begin{align}\label{equ: C^gamma(k_0) characteristic}
\det\bigl(\mathcal{C}^{\gamma}(k_0)-\lambda I\bigr) = \sum_{l=0}^{m} C_{I,l}^{\mathrm{per}}\,(-\lambda)^{N-l}-\frac{2\cos(\gamma L)}{\prod_{j=1}^{2N}t_j(k_0)},
\end{align}
where $m=m(k_0)$ is defined in (\ref{def:m_periodic}). Consequently, when $n=n(k_0)<2N$, the eigenvalues of $\mathcal{C}^{\gamma}(k_0)$ are $\gamma$-independent. In particular, \(\mathcal{C}^{\gamma}(k_0)\) possesses exactly \(M\) nonzero eigenvalues counted with multiplicity, where
\[
M=\begin{cases}
    m,&n<2N\text{ or }\cos(\gamma L)<1,\\
    N-1,&n=2N\text{ and }\cos(\gamma L)=1.
\end{cases}
\]
Moreover, all eigenvalues of $\mc{C}^{\gamma}(k_0)$ are real and non-negative.
\end{lemma}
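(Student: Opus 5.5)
The plan is to compute the characteristic polynomial of the cyclic tridiagonal matrix $\mathcal C^\gamma(k_0)$ by a matching expansion, and to read off the spectral claims from that formula together with a Hermitian/Gram structure.

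\textbf{Characteristic polynomial.} Write $\mathcal C^\gamma(k_0)-\lambda I$ as a periodic Jacobi-type matrix. It has diagonal entries $d_i-\lambda$, with $d_i=\theta_{2i-2}+\theta_{2i-1}$ (all $\theta$'s evaluated at $k_0$). Its off-diagonal entries $(i,i+1)$ and $(i+1,i)$ are $-\theta_{2i-1}$ and $-\theta_{2i}$, and the corner entries carry the phases $\e^{\mp\i\gamma L}$.

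The standard permutation expansion for such a cyclic matrix splits into two kinds of terms:
\begin{itemize}
\item Permutations that are products of disjoint transpositions of cyclically adjacent indices. These give the ``matching'' terms.
\item The two full $N$-cycles. These give $-\prod_j\theta_{2j-1}\e^{\i\gamma L}-\prod_j\theta_{2j}\e^{-\i\gamma L}$ up to sign. (For $N=2$ one must be careful that the added corner contributions are handled correctly; I would treat $N\le2$ by direct computation.)
\end{itemize}

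Rather than expanding $d_i$ directly, I would encode the problem on the $2N$-cycle $\Omega$. Each diagonal entry $\theta_{2i-2}+\theta_{2i-1}$ and each off-diagonal product $\theta_{2i-1}\theta_{2i}$ then corresponds to choosing edges $j$ of the $2N$-cycle, where edge $j$ joins $j$ and $j+1$ and has weight $\theta_j$.

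The key combinatorial claim is a cancellation. Choosing $\theta_{2i-1}$ from diagonal $i$ and $\theta_{2i}$ from diagonal $i+1$ (two adjacent edges of the $2N$-cycle sharing the vertex $2i$) cancels exactly against the transposition term $-\theta_{2i-1}\theta_{2i}$. Likewise, choosing $\theta_{2i}$ from diagonal $i+1$ and $\theta_{2i+1}$ from the same diagonal is impossible, since we pick only one term per diagonal.

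I would prove this cancellation cleanly by induction, or equivalently by a sign-reversing involution on selections: a selection containing two edges that share an even vertex is paired with the corresponding transposition term. What survives is exactly the sets of pairwise nonadjacent edges, i.e.\ $d(j_a,j_b)\ge2$, each weighted by $(-\lambda)^{N-l}$. The full-cycle terms combine with the residual all-odd and all-even diagonal selections, leaving exactly $-2\cos(\gamma L)\prod_j\theta_{2j-1}$.

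Finally, $\theta_j(k_0)\neq0$ iff $\{j,j+1\}\subset I$, which restricts the sum to $C^{\rm per}_{I,l}$. Also $\prod_{j}\theta_{2j-1}(k_0)=\prod_j\theta_{2j}(k_0)=1/\prod_{j=1}^{2N}t_j(k_0)$. This identity is the careful bookkeeping step and I expect it to be the main obstacle. I would verify it first for $N=1,2$ and then run the induction on $N$ via the transfer-matrix form $\det = \operatorname{tr}\prod_i\begin{pmatrix} d_i-\lambda & -\theta_{2i-2}\theta_{2i-1}\\ 1&0\end{pmatrix} - (\ldots)$, comparing with the analogous product for the edge polynomial.

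\textbf{Consequences.}
\begin{itemize}
\item If $n<2N$, some $t_j(k_0)=\infty$, so the constant term $\frac{2\cos(\gamma L)}{\prod_{j=1}^{2N}t_j(k_0)}$ vanishes. The polynomial is then $\gamma$-independent, and its lowest nonvanishing power is $(-\lambda)^{N-m}$, because $C^{\rm per}_{I,m}>0$ as a sum of positive products (a maximum matching exists in each component). Hence there are exactly $m$ nonzero roots.
\item If $n=2N$, then $I=\Omega$ is a single cycle and $m=N$. The constant term is $C^{\rm per}_{I,N}-2\cos(\gamma L)\prod t_j^{-1}$. The two perfect matchings of the $2N$-cycle give $C^{\rm per}_{I,N}=2\prod t_j^{-1}$, so the constant term equals $2(1-\cos\gamma L)/\prod t_j$. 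This is nonzero unless $\cos(\gamma L)=1$. In that case $\lambda=0$ is a root, and it is simple because $C^{\rm per}_{I,N-1}>0$. This gives $M=N-1$.
\end{itemize}

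\textbf{Reality and nonnegativity.} $\mathcal C^\gamma(k_0)$ is not Hermitian as written when $\theta_{2i-1}\ne\theta_{2i}$. However, conjugating by a positive diagonal matrix $\Lambda$ with $\Lambda_{i+1}/\Lambda_i=\sqrt{\theta_{2i}/\theta_{2i-1}}$ symmetrizes each internal link. Around the cycle, the product of these ratios is $\prod\theta_{2i}/\prod\theta_{2i-1}=1$ when all $\theta$'s are nonzero, so the corner entries also become conjugate with modulus $\sqrt{\theta_{2N-1}\theta_{2N}}$. If some $\theta$ vanishes, the cycle is broken and the symmetrization is immediate.

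The resulting Hermitian matrix has the form $\sum_i w_i(e_i-\e^{\i\phi_i}e_{i+1})(e_i-\e^{\i\phi_i}e_{i+1})^*$ plus nonnegative diagonal remainders $\theta_{2i-2}+\theta_{2i-1}-2\sqrt{\cdot}\ge0$ (by AM–GM). It is therefore positive semidefinite, and all eigenvalues are real and nonnegative.
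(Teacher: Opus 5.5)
\textbf{What is correct.} Your derivation of the characteristic polynomial is correct, and it takes a different route from the paper. You expand the cyclic tridiagonal determinant over permutations. A sign-reversing involution at each doubly covered spacing vertex $2i$ cancels the selections $\theta_{2i-1}\theta_{2i}$ against the adjacent transpositions. What survives is the matchings of the $2N$-cycle together with the two rotations $-\prod_j\theta_{2j-1}\e^{\i\gamma L}-\prod_j\theta_{2j}\e^{-\i\gamma L}$. These rotations alone produce the $-2\cos(\gamma L)$ term. The two perfect matchings stay in $C^{\rm per}_{I,N}$, as you correctly use later, so your remark about them ``combining'' should be dropped.

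The paper instead writes $\mathcal C^\gamma(k_0)=\mathcal L^{-1}A_\gamma^*\mathcal S^{-1}A_\gamma$. It passes by Sylvester's identity to the Hermitian matrix $H=\mathcal L^{-1/2}A_\gamma^*\mathcal S^{-1}A_\gamma\mathcal L^{-1/2}$ and reads off the coefficients by Cauchy--Binet. That route gives reality and nonnegativity at no extra cost, for every zero pattern of the weights. Your separate argument for this last claim has two genuine gaps.

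\textbf{Gap 1: the positive semidefiniteness step is wrong.} Your diagonal similarity ($\Lambda_i\propto t_{2i-1}^{-1/2}$) produces exactly $H$, whose off-diagonal entries are $-w_i$ with $w_i=\sqrt{\theta_{2i-1}\theta_{2i}}$. Subtracting $\sum_iw_i(e_i-\e^{\i\phi_i}e_{i+1})(e_i-\e^{\i\phi_i}e_{i+1})^*$ leaves the diagonal remainder $\theta_{2i-2}+\theta_{2i-1}-w_{i-1}-w_i=\theta_{2i-2}\bigl(1-\sqrt{t_{2i-1}/t_{2i-3}}\bigr)+\theta_{2i-1}\bigl(1-\sqrt{t_{2i-1}/t_{2i+1}}\bigr)$, with indices mod $2N$. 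This is not an AM--GM expression, and it is negative whenever $t_{2i-1}$ exceeds both $t_{2i-3}$ and $t_{2i+1}$.

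There are two ways to repair this.
\begin{itemize}
\item Weight the rank-one pieces asymmetrically: $v_i=\sqrt{\theta_{2i-1}}\,e_i-\e^{\i\phi_i}\sqrt{\theta_{2i}}\,e_{i+1}$. Then $\sum_iv_iv_i^*=H$ exactly, with no remainder.
\item Once reality is known, use your own formula. All coefficients of powers of $-\lambda$ are nonnegative and the coefficient of $(-\lambda)^N$ is $1$, so $\det(\mathcal C^\gamma(k_0)-\lambda I)\ge(-\lambda)^N>0$ for $\lambda<0$.
\end{itemize}

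\textbf{Gap 2: the degenerate case is not immediate.} The claim that symmetrization is immediate when some $\theta_j(k_0)$ vanishes is false. Suppose the spacing index $2i$ lies in $I$ but exactly one of $2i\pm1$ does. Then exactly one of $\theta_{2i-1},\theta_{2i}$ vanishes, the corresponding link is one-directional, and no diagonal similarity can make the matrix symmetric. For example, $N=2$ and $I=\{1,2\}$ give $\begin{pmatrix}\theta_1&-\theta_1\\0&0\end{pmatrix}$. For such $I$ your proof of reality is missing.

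You can close this in either of two ways.
\begin{itemize}
\item Use the paper's factorization; Sylvester's identity tolerates zero weights.
\item Argue by continuity. Replace every $t_j(k_0)=\infty$ by a large finite value; then all weights are positive and $\prod_j\theta_{2j-1}=\prod_j\theta_{2j}$ holds automatically. Pass to the limit in the characteristic polynomial, whose roots depend continuously on its coefficients.
\end{itemize}
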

\begin{proof}
Let $A_\gamma$ be the incidence matrix acting by
$(A_\gamma q)_j=q_{j+1}-q_j$, where
$q_{N+1}=\e^{\i\gamma L}q_1$. Set
\begin{equation}\label{equ: A_gamma def}
 \mathcal L^{-1}=\operatorname{diag}_{1\leq j\leq N}
             \{t_{2j-1}(k_0)^{-1}\},\qquad
 \mathcal S^{-1}=\operatorname{diag}_{1\leq j\leq N}
             \{t_{2j}(k_0)^{-1}\}.
\end{equation}
These are finite nonnegative diagonal matrices; the superscript
$-1$ denotes the specified weights, even if some weights vanish.
When all intervals are resonant, $\mathcal L$ and $\mathcal S$
are their ordinary inverses. Directly from
\eqref{eq:mathcalCkgamma_def},
\[
 \mathcal C^\gamma=\mathcal L^{-1}A_\gamma^*
                         \mathcal S^{-1}A_\gamma.
\]
Here and below a star or the superscript $H$ denotes conjugate
transpose. Sylvester's determinant identity shows that this
matrix has the same characteristic polynomial as
\[
 H=\mathcal L^{-1/2}A_\gamma^*
                         \mathcal S^{-1}A_\gamma\mathcal L^{-1/2},
\]
which is Hermitian positive semidefinite. Therefore, all its
eigenvalues are real and nonnegative, including when weights vanish.

We compute the elementary symmetric coefficients of $H$ using
Cauchy--Binet. The coefficient of $(-\lambda)^{N-l}$ is
\begin{equation}\label{eq:cb-minors}
 \sum_{\substack{U,V\subset\{1,\ldots,N\}\\|U|=|V|=l}}
 \left(\prod_{j\in U}t_{2j-1}(k_0)^{-1}\right)
 \left(\prod_{j\in V}t_{2j}(k_0)^{-1}\right)
 |\det (A_\gamma)_{V,U}|^2.
\end{equation}
The bipartite graph of nonzero entries of $A_\gamma$ is the cycle
of $2N$ alternating resonator and spacing vertices. For $l<N$,
the subgraph induced by $U,V$ is a union of paths. A path has at
most one perfect matching: an endpoint, if matched, forces its
incident edge, and deletion of its endpoints proves the claim
inductively. Thus, a minor in \eqref{eq:cb-minors} vanishes unless
this induced graph has a perfect matching, in which case its
determinant is a single product of entries of modulus one.
Every size-$l$ matching of the full cycle determines exactly
one pair $U,V$ of its covered vertices, and its weight is
$\prod\theta_j$. Zero weights simply eliminate edges outside
the resonant set. It follows that \eqref{eq:cb-minors} equals
$C_{I,l}^{\mathrm{per}}$ for $l<N$.

For $l=N$, there is one full minor and
$|\det A_\gamma|^2=|1-\e^{\i\gamma L}|^2$.
The two perfect matchings of the full cycle each have weight
$\prod_{j=1}^{2N}t_j(k_0)^{-1}$, or zero if $I\ne\Omega$.
The final coefficient is therefore
\[
 C_{I,N}^{\mathrm{per}}
 -2\cos(\gamma L)\prod_{j=1}^{2N}t_j(k_0)^{-1}.
\]
This proves \eqref{equ: C^gamma(k_0) characteristic}.

If $n<2N$, the product vanishes, and the largest nonzero
matching coefficient is $C_{I,m}^{\mathrm{per}}>0$ (with the
empty matching giving $1$ when $m=0$). The zero eigenvalue has
multiplicity $N-m$. If $n=2N$, the constant term is
$2(1-\cos(\gamma L))/\prod_jt_j$. It is positive unless
$\e^{\i\gamma L}=1$. At that phase the constant term vanishes,
but $C_{\Omega,N-1}^{\mathrm{per}}>0$, giving a simple zero
eigenvalue and $N-1$ positive eigenvalues. All assertions follow.
\end{proof}

Following the notation of~\cite[Section~3.3]{pm1}, we introduce
\begin{equation*}\label{def:matrixrs}
    \nu_i := \frac{2\sigma_i}{1+\sigma_i} = \frac{2\delta}{\delta+r_i},1\leq i\leq   N, \quad
    R := \begin{pmatrix} -1 & 1 \\ -1 & 1 \end{pmatrix}, \quad
    S := \begin{pmatrix} 0 & -1 \\ 1 & 0 \end{pmatrix}.
\end{equation*}
We write $L_j(k) := L(\mathfrak{t}_j k)$ for the matrix associated 
with the $j$\textsuperscript{th} component $\mathfrak{t}_j$ of $\mathfrak{\bm{t}}$ defined in~\eqref{def:vectort}. Define
\begin{equation}\label{equ: G(k;delta) expand}
\begin{aligned}
    G(k;\delta) := L_{2N}(k)(R+\nu_N S)L_{2N-1}(k)(R+\nu_N S)L_{2N-2}(k)(R+\nu_{N-1} S)
    \cdots \\L_3(k)(R+\nu_2S)L_2(k)(R+\nu_1S)L_1(k)(R+\nu_1 S),
\end{aligned}
\end{equation}
and set
\begin{align*}\label{equ: g(k;nu) def}
g(k;\delta) :=\operatorname{tr} G(k;\delta)-\prod_{i=1}^N[\nu_i(2-\nu_i)]\cdot2\cos(\alpha L).
\end{align*}
A direct multiplication gives $\prod_{i=1}^N\frac{4\sigma_i}{(1+\sigma_i)^{2}}J_0 M_{\text{tot}}(k ; \delta) J_0=G(k ; \delta)$ with $J_0=\diag\{-1,1\}$ and thus, $f(k ; \delta)\equiv g(k ; \delta)$. Therefore, $k\in\C$ is a resonance of (\ref{equ: Bloch problem}) if and only if $k$ is a zero of $g(k;\delta)$. We expand
\begin{equation*}\label{equ:G_expession}
    G(k;\delta) = \sum_{l=0}^{\infty} G_l(k)\,\delta^l, \qquad
    g(k;\delta) = \sum_{l=0}^{\infty} g_l(k)\,\delta^l,
\end{equation*}
Then we have
\begin{align*}\label{equ: g_l_tr relation}
g_l(k)=\begin{cases}
    \operatorname{tr}G_l(k),&0\leq l\leq N-1,\\
    \operatorname{tr}G_l(k)-c_l\cdot\cos(\alpha L),&l\geq N,
\end{cases}
\end{align*}
with some non-zero constants $c_l$. In particular, $c_N=2^{2N+1}/{(r_1r_2\cdots r_N)}$.
We next prove directly the Taylor information needed for the
square-root scaling.

\begin{proposition}[Leading cyclic expansion]\label{thm: tr_G_order}
Let $z=k-k_0$ and write
$\operatorname{tr}G(k_0+z;\delta)=\sum_{q,l\geq0}d_{ql}z^q\delta^l$.
Then $d_{ql}=0$ if $q+2l<n$. For $0\leq l\leq m$,
\begin{equation}\label{equ: tr G_l expression}
 \operatorname{tr}G_l(k_0+z)
 =C_1(-2i)^{2N}(-1)^l C_{I,l}^{\mathrm{per}}z^{n-2l}
   +\O(z^{n-2l+1}),
\end{equation}
where
\begin{equation}\label{equ: C_1 def}
 C_1=(-1)^{\sum_{j\in I}m_j}
       \prod_{j\notin I}\sin(\mathfrak t_jk_0)
       \prod_{j\in I}\mathfrak t_j\ne0.
\end{equation}
There are no further coefficients on $q+2l=n$.
Thus, the part of the Newton diagram on this line is determined
exactly, without any assertion about a second edge of the diagram.
\end{proposition}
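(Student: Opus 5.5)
The plan is to expand $\operatorname{tr}G(k_0+z;\delta)$ by multilinearity over the $2N$ factors $R+\nu_iS$, choosing at each position either $R$ or $\nu_iS$. Each choice pattern is indexed by the set $P\subset\Omega$ of positions where $S$ is chosen. Since $\nu_i=2\delta/r_i+\O(\delta^2)$, the pattern $P$ contributes at $\delta$-order at least $|P|$. For its $z$-order, I will use the rank-one factorization $R=u w^\top$ with $u=(1,1)^\top$ and $w=(-1,1)^\top$. Consider a maximal run of consecutive $R$'s (cyclically) separated by single factors $L_j(k)$. Inside such a run, each sandwich $R L_j R$ collapses to $u\,(w^\top L_j u)\,w^\top$, and $w^\top L(x)u=-2\i\sin x$. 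Hence every interval $j$ lying between two $R$-positions contributes a factor $-2\i\sin(\mathfrak t_jk)$. This factor vanishes to first order at $k_0$ exactly when $j\in I$, with leading term $-2\i(-1)^{m_j}\mathfrak t_j z$. For $P=\emptyset$, the cyclic trace gives $\prod_{j=1}^{2N}(-2\i\sin(\mathfrak t_jk))$, whose lowest-order term is $C_1(-2\i)^{2N}z^n$. This is the $l=0$ case of \eqref{equ: tr G_l expression}.

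Next I handle $P\neq\emptyset$. I decompose $P$ into maximal cyclic runs of consecutive $S$-positions. A run of length $r$ suppresses the sine factors of the $r+1$ intervals adjacent to or inside it: these become entries of the nonsingular products built from $S$ and the $L_j$, rather than factors $w^\top L u$. All other intervals still produce their sine factors. A run therefore lowers the $z$-order by at most $r+1$ (exactly the number of resonant intervals among those $r+1$) while raising the $\delta$-order by $r\ge1$. Since $r+1\le 2r$, with equality only for $r=1$, summing over runs gives $q+2l\ge n$ for every monomial. This proves $d_{ql}=0$ for $q+2l<n$.

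Equality $q+2l=n$ forces three things. First, every run is a singleton. Second, each singleton $S$ at the position between intervals $j$ and $j+1$ (within one resonator factor or across a spacing) must have both $j,j+1\in I$. Third, only the leading term $2\delta/r_i$ of $\nu_i$ and the leading Taylor terms of the remaining sines may be used. Because the runs are singletons, the chosen positions are pairwise non-adjacent. They are therefore exactly the cyclic matchings counted in $\Omega_l$ with $\{j_i,j_i+1\}\subset I$, which yields $C^{\mathrm{per}}_{I,l}$. In particular nothing else lies on the line $q+2l=n$, and $l\le m$ automatically.

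The main computation, and the step I expect to be delicate, is the weight of a single isolated $S$. I must show that $w^\top L_{j+1}SL_j u=\pm 2\cos(\mathfrak t_{j+1}k-\mathfrak t_jk)$ evaluated at $k_0$, times $2/r_i$, divided by the two removed leading sine terms $(-2\i)^2(-1)^{m_j+m_{j+1}}\mathfrak t_j\mathfrak t_{j+1}$, produces exactly $-\theta_j$. The value at $k_0$ is $\pm2$, with the sign set by $(-1)^{m_j+m_{j+1}}$. The identity $\theta_j=1/(r_i\mathfrak t_j\mathfrak t_{j+1})$ holds because $t_{2i-1}=r_i\mathfrak t_{2i-1}$ and $t_{2i}=\mathfrak t_{2i}$ for the pair sharing resonator $i$, and analogously across a spacing. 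I will carefully check this sign and the index $i$ of $\nu_i$ in both the intra-resonator and spacing cases. The $(-1)^{m}$ signs must also cancel against $C_1$ so that only $(-1)^l$ survives.

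Finally, I treat separately the degenerate pattern in which $P$ is all of one parity class when $n=2N$. Here $l=N=m$, and the trace is not a product of scalar sandwiches. I plan to compute it directly as $\operatorname{tr}\prod(L S L)$-type products, or simply note it still has $q=0$ and contributes $\prod\theta$. Consistency of this term with $C^{\mathrm{per}}_{I,N}$ is the remaining check.
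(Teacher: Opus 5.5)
Your proposal is correct and follows essentially the paper's proof. The shared steps are the multilinear expansion over interfaces, the factorization $R=uw^\top$ that turns each interval flanked by two unselected copies of $R$ into a factor $-2i\sin(\mathfrak t_jk)$, a count of touched resonant intervals, and the matching weight $-\delta/(t_jt_{j+1}z^2)$ via $\rho_j\mathfrak t_j\mathfrak t_{j+1}=t_jt_{j+1}$; your run decomposition is a slightly sharper form of the paper's ``at most $2s$ touched intervals''.

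One small correction: the alternating (one-parity-class) selections are not degenerate. They still contain $N$ copies of $R$ and factor as $\prod_j w^\top L_{j+1}SL_ju$, like every other matching term. The only pattern with no $R$ is the all-$S$ term $P=\Omega$. Your run argument does not literally cover it, but it has $l\ge 2N$ and so satisfies $q+2l\ge n$ trivially.
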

\begin{proof}
Write $R=uv^\top$ with $u=(1,1)^\top$ and $v=(-1,1)^\top$, and
put $\eta(X)=v^\top Xu$. For matrices separated by copies of $R$,
\[
 \operatorname{tr}(X_sR\cdots X_1R)=\prod_{j=1}^s\eta(X_j).
\]
In particular, $\eta(L_j)=-2\i\sin(\mathfrak t_jk),\eta(L_{j+1}SL_j)=2\cos((\mathfrak t_{j+1}-\mathfrak t_j)k)$. The indices in these identities are cyclic. Recall (\ref{equ: G(k;delta) expand}), expanding each
$R+\nu_iS$ selects a set of interfaces at which $R$ is replaced by
$S$. A term with $s$ selected interfaces has a factor $\O(\delta^s)$.
Every unselected interval, meaning an $L_j$ bordered by two
unreplaced copies of $R$, contributes its sine factor. At most
$2s$ intervals are touched by the selected interfaces; hence at
least $\max(n-2s,0)$ of the resonant sine factors remain.
This proves $q+2l\geq n$ for every Taylor monomial. The term with
all interfaces selected has $l\geq2N$ and satisfies the same
inequality automatically.

Equality $q+2l=n$ requires $s=l$, the linear term of every
$\nu_i$, and $2l$ distinct touched intervals, all in $I$.
Equivalently, the selected interfaces form a matching of the
resonant cyclic graph. For the interface between $L_j$ and
$L_{j+1}$, let $\rho_j=r_i$ if $j=2i-1$, and
$\rho_j=r_{i+1}$ if $j=2i$ (cyclically). Its coefficient is
$2\delta/\rho_j+\O(\delta^2)$. When $j,j+1\in I$, its
replacement of two sine factors changes the leading product by
\[
 \frac{(2\delta/\rho_j)\,2(-1)^{m_j+m_{j+1}}}
 {(-2i)^2(-1)^{m_j+m_{j+1}}
       \mathfrak t_j\mathfrak t_{j+1}z^2}
 =-\frac{\delta}{t_jt_{j+1}z^2}.
\]
Here $\rho_j\mathfrak t_j\mathfrak t_{j+1}=t_jt_{j+1}$
also when adjacent resonators have different speed ratios.
Disjoint replacements multiply. Summing over size-$l$ matchings
therefore gives \eqref{equ: tr G_l expression}. The maximum
matching size is $m$: each proper component is a path with
$n_j$ vertices, and the full cycle has maximum matching size $N$.
This proves the final assertion as well.
\end{proof}
\begin{figure}[htbp]
\centering
\includegraphics[width=.74\textwidth]{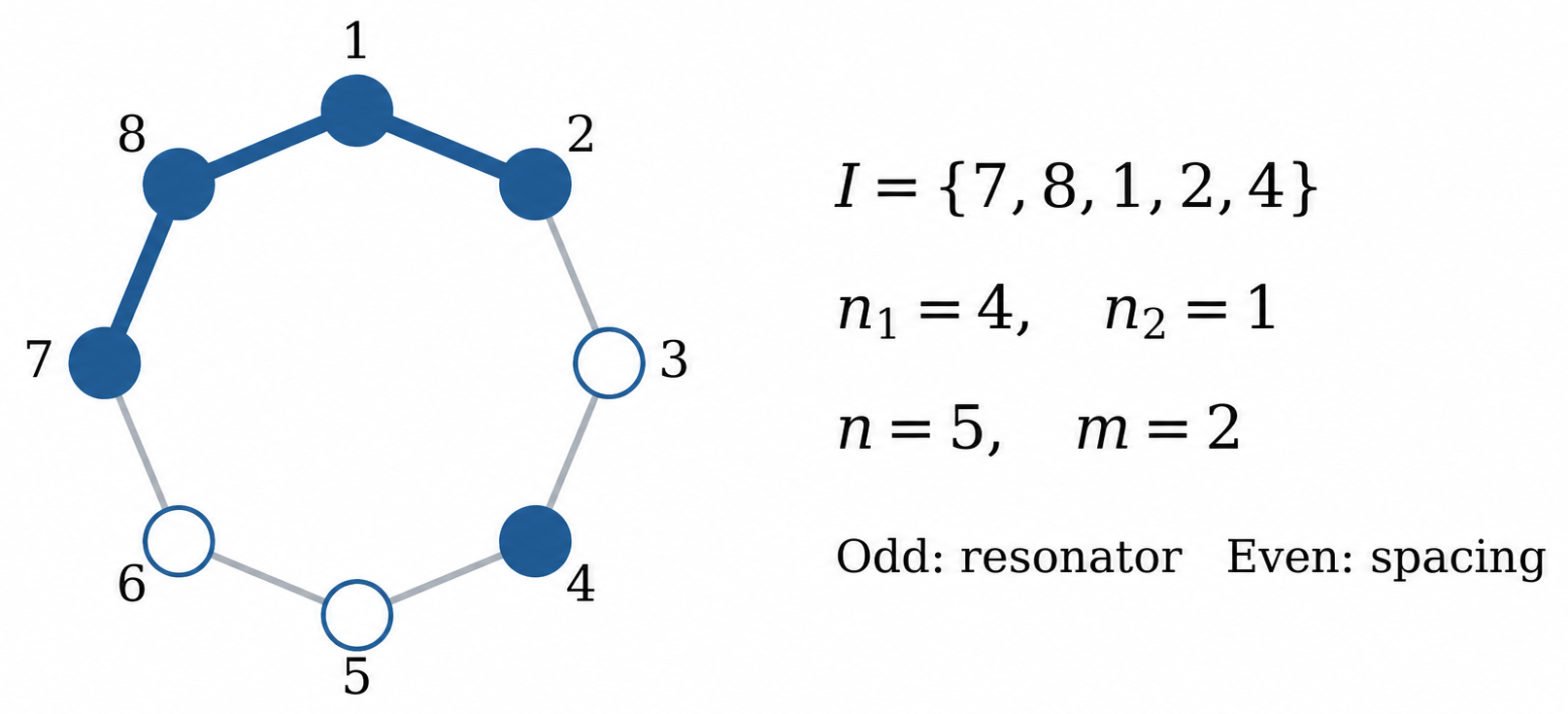}
\caption{The cyclic resonant set for $N=4$ and
$I=\{7,8,1,2,4\}$. The filled vertices form components of lengths
$4$ and $1$, so $n=5$ and $m=2$. Bold edges show one maximum
matching. A selected edge contributes its weight
$\theta_j=1/(t_jt_{j+1})$ to $C_{I,l}^{\mathrm{per}}$.
This is a graph of interval indices, not a spatial picture of the chain.}
\label{fig:matching}
\end{figure}
% \begin{lemma}\label{thm: max_m_n-m}
%  Let \(n(I)\) and \(m(I)\) be defined in (\ref{equ: I def}) and (\ref{def:m_periodic}), respectively. Then
% \[
% \max_{I\subset\Omega} m(I)=N,\qquad \max_{I\subset\Omega} (n(I)-m(I))=N.
% \]
% Moreover, $m(I)=N$ if and only if $I=\Omega$.
% \end{lemma}
% \begin{proof}
% For nonempty \(I\), let \(n_1,\dots,n_p\) be the lengths
% of its cyclic maximal connected components. Then
% \(n=\sum_j n_j\) and \(m=\sum_j\lfloor n_j/2\rfloor\),
% so \(m\le n/2\le N\). Equality \(m=N\) forces
% \(n=2N\), hence \(I=\Omega\); conversely,
% \(I=\Omega\) gives \(m=N\).

% If \(\varnothing\ne I\ne\Omega\), each component is
% followed by at least one index outside \(I\), so
% \(p\le 2N-n\). Consequently,
% \[
%  n-m=\sum_{j=1}^p\left\lceil\frac{n_j}{2}\right\rceil
%  \le\frac{n+p}{2}\le N.
% \]
% For \(I=\varnothing\), \(n-m=0\), whereas
% \(I=\Omega\) gives \(n-m=N\).
% Thus both maxima equal \(N\).
% \end{proof}

We now establish the main resonance theorem for the Bloch eigenvalue problem \eqref{equ: Bloch problem}.
\begin{theorem}\label{thm: k(delta) expand}
Fix a real $\alpha\in Y^*$. Let $\delta\in\C\setminus\{0\}$ tend to zero and fix \(k_{0}\in E\). Let \(I=I(k_0)\),\(n=n(k_{0})\), \(m=m(k_{0})\) be as in (\ref{equ: I def})(\ref{def:m_periodic}). 
Define
\begin{align}\label{equ: gamma def}
    \gamma:=\gamma(\alpha)=\alpha+\frac{k_0}{L}\sum_{j\in I}\mathfrak{t}_j.
\end{align}
According to Lemma \ref{lem: spec of capacitance matrix}, denote by \(\lambda_{1},\dots,\lambda_{M}\) the positive eigenvalues of the periodic frequency-dependent capacitance matrix \(\mathcal{C}^{\gamma}(k_{0})\) defined in (\ref{eq:mathcalCkgamma_def}), counted with multiplicity. Zeros are counted in the $k$-plane;
at $k=0$, multiplicity in $k$ need not equal eigenspace dimension.
Fix a branch $h=\sqrt\delta$ on a slit punctured neighborhood;
the conclusions also specify the roots as an unordered multiset.
Then as \(\delta\to0\), the Bloch problem (\ref{equ: Bloch problem}) admits exactly \(n(k_0)\) resonances \(k(\delta)\) in a neighbourhood of \(k_{0}\):
\begin{itemize}
    \item The first $2M$ branches of resonances have the following asymptotic expression
    \begin{align*}\label{equ:omega_j^pm expand}
    k_j^\pm(\delta)=k_0\pm \sqrt{{\lambda_j}\delta}+\mc{O}(\delta),\quad j=1,2,\cdots,M.
    \end{align*}
    \item The remaining $n-2M$ resonances have the form
    \[
    k_j(\delta)=k_0+\mc{O}(\delta),\quad j=1,2,\cdots,n-2M,
    \]
    Moreover, if $n=2N$ and $\cos(\gamma L)=1$, we have $n-2M=2$ and the two $\mc{O}(\delta)$-resonances are exactly $k_j(\delta)\equiv k_0,j=1,2$, counted with multiplicity.
\end{itemize}
\end{theorem}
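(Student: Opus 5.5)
The plan is to work with the analytic function $g(k;\delta)=f(k;\delta)$, whose zeros near $k_0$ are exactly the resonances. Rouch\'e's theorem and Lemma~\ref{thm: f(k;0) form} already give exactly $n$ zeros in a small fixed disc around $k_0$ once $|\delta|$ is small. To resolve their positions, I rescale $z=k-k_0=h w$ with $h=\sqrt\delta$ and study $h^{-n}g(k_0+hw;\delta)$ as an analytic function of $w$ on a large disc $|w|\le R$.

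By Proposition~\ref{thm: tr_G_order}, every Taylor monomial $d_{ql}z^q\delta^l$ of $\operatorname{tr}G$ satisfies $q+2l\ge n$. After the substitution it therefore contributes $h^{q+2l-n}w^q$, which is $\O(h)$ unless $q+2l=n$. The terms on the line $q+2l=n$ are given exactly by \eqref{equ: tr G_l expression}. The Bloch-phase term $-c_l\cos(\alpha L)\delta^l$, $l\ge N$, has $q=0$, so it reaches the line $q+2l=n$ only when $n=2N$ and $l=N$.

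I must reconcile its sign and phase with the $\gamma$-dependence of the capacitance determinant. The sign factor $(-1)^{\sum_{j\in I}m_j}$ in $C_1$ equals $\e^{\i k_0\sum_{j\in I}\mathfrak t_j}$. When $n=2N$ this is $\e^{\i k_0 L'}$, so $\cos(\alpha L)$ combines with the normalization into $\cos(\gamma L)$ with $\gamma$ as in \eqref{equ: gamma def}. One has to check that the prefactors $c_N=2^{2N+1}/\prod r_i$, $C_1$ and $\prod t_j=\prod\rho\,\mathfrak t\,\mathfrak t$ match. This bookkeeping is where I expect the main care.

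The limit is then
\[
h^{-n}g(k_0+hw;\delta)\to C_1(-2\i)^{2N}w^{n-2N}\Bigl[\sum_{l}C^{\rm per}_{I,l}(-1)^l w^{2(N-l)}-\tfrac{2\cos(\gamma L)}{\prod t_j(k_0)}\Bigr].
\]
By Lemma~\ref{lem: spec of capacitance matrix} the bracket is $\det(\mathcal C^\gamma(k_0)-w^2I)$, and the convergence is uniform on compacts in $w$.

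A second Rouch\'e (Hurwitz) argument on small discs around each nonzero root $w=\pm\sqrt{\lambda_j}$ gives the corresponding number of zeros there. This also handles multiple eigenvalues as an unordered multiset. These roots account for $2M$ of the $n$ zeros, with $k=k_0\pm h\sqrt{\lambda_j}+o(h)$. The remaining $n-2M$ zeros concentrate at $w=0$, that is, they are $o(h)$.

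To upgrade $o(h)$ to $\O(\delta)$, I would argue with the Newton polygon or Puiseux expansion in $h$. Since $g$ is analytic in $(z,\delta)$, the roots are Puiseux series in $h$. For a simple root $w_j\ne0$ of the limiting polynomial, the implicit function theorem in $(w,h)$ gives $w=w_j+\O(h)$. For multiple eigenvalues I would instead consider the even and odd structure of the expansion, or simply apply the implicit function theorem to the analytic function of $(w,h)$ after noting that the symmetric functions of the cluster are analytic in $h$. This yields the $\O(\delta)$ error in $k$ in the averaged sense, though a true $\O(\delta)$ bound per root may need an additional argument; this is the step I am least sure of.

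For the cluster at $w=0$, I would rescale $z=\delta\zeta$. The Newton-diagram lower bound $q+2l\ge n$ shows all roots of size $o(h)$ satisfy $|z|\lesssim|\delta|$, provided the coefficient at $(q,l)=(n-2m',m')$ with $m'$ the largest nonzero index is nonvanishing. That coefficient is $C^{\rm per}_{I,M}>0$, or the shifted constant term.

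For the exact statement when $n=2N$ and $\cos(\gamma L)=1$, I would argue directly. In that case $k_0\ne0$ resonates on all intervals, so $T_j^{\rm Res}=\pm I$ and $T_j^{\rm Sp}=\pm I$, giving $A(k_0;\delta)=\pm I$ for every $\delta$. The sign is $\e^{\i\gamma L-\i\alpha L}$, hence $A=\e^{\i\alpha L}I$. Because $\det A=1$ and $A(k_0)$ is scalar, $D(k)-2\cos\alpha L$ has a double zero at $k_0$: the first derivative of the trace of a perturbation of a scalar matrix reduces to $\operatorname{tr}(A^{-1}A')=0$, and the second-order term also vanishes. Rouch\'e then counts these two zeros as the $n-2M=2$ remaining ones. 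The case $k_0=0$ is handled by Example~\ref{ex: k=0}, where evenness of $D$ in $k$ gives the double zero.
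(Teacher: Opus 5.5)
\textbf{The upgrade from $o(h)$ to $\O(h)$ at multiple roots is a genuine gap.} Much of your proposal matches the paper: the rescaling $z=hw$, analyticity of $h^{-n}g(k_0+hw;h^2)$ from $q+2l\ge n$, the limit polynomial, and the Hurwitz count of $2M$ zeros near $w=\pm\sqrt{\lambda_j}$ and $n-2M$ near $w=0$. Your exceptional-case argument is also valid and shorter than the paper's computation of $\partial_{kk}g(k_0;\delta)\ne0$: $A(k_0;\delta)=\sigma I$ and $\det A\equiv1$ give a zero of order $\ge2$, and the rescaled count near $w=0$ caps it at $n-2M=2$. Note that only the value and the first derivative vanish there, not the second.

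The failing step is the one you flagged: passing from $w=c+o(1)$ to $w=c+\O(h)$ at a root $c$ of multiplicity $d\ge2$ of the limit polynomial. This is not a corner case. It covers repeated positive eigenvalues and every zero cluster with $n-2M\ge2$ outside the exceptional case, already $I=\{1,3\}$ for $N=2$, where $n=2$ and $M=0$.
\begin{itemize}
\item Analyticity of the elementary symmetric functions $a_l(h)$ of the $y_j(h)-c$ only gives $a_l(h)=\O(h)$. Hence you get only $y_j=c+\O(h^{1/d})$, i.e.\ $k-k_0=hc+\O(|\delta|^{1/2+1/(2d)})$. Puiseux expansions a priori permit exactly this.
\item The implicit function theorem does not apply at a multiple root.
\item $g(k_0+z;\delta)$ is in general not even in $z$; that holds only at full resonance.
\item Your Newton-polygon argument for the zero cluster has the same defect. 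The bound $q+2l\ge n$ and the coefficients on that line fix only the first edge. Nonvanishing at the vertex $(n-2m,m)$ only says that $n-2m$ roots are $o(h)$.
\item To get $|z|\lesssim|\delta|$ you would need every monomial with $q<n-2m$ to satisfy $q+l\ge n-m$; otherwise an edge with roots $|z|\gg|\delta|$ appears. Proposition~\ref{thm: tr_G_order} gives no such information, and the paper deliberately avoids higher edges.
\end{itemize}

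\textbf{The missing ingredient is self-adjointness.}
\begin{enumerate}
\item For real $h\ne0$ you have $\delta=h^2>0$. By Proposition~\ref{thm: k real}, all roots $y_j(h)$ inside a small circle $\Gamma$ around $c$ are real.
\item Hence $Q(h)=\sum_j(y_j(h)-c)^2=\frac{1}{2\pi\i}\oint_\Gamma(y-c)^2\,\partial_yH/H\,\dd y$ is analytic, nonnegative on the real axis, and $Q(0)=0$. So $Q'(0)=0$, $Q(h)=\O(h^2)$, and $y_j(h)=c+\O(h)$ for real $h$.
\item It follows that $a_l(h)=\O(h^l)$ on the real axis. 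Since $a_l$ is analytic, its Taylor coefficients below order $l$ vanish, so $a_l(h)=h^lb_l(h)$ for complex $h$ as well.
\item Substituting $y-c=hv$ gives a monic polynomial in $v$ with bounded coefficients, so $y_j(h)=c+\O(h)$ for all small complex $h$.
\end{enumerate}
This gives both the $\O(\delta)$ error of the square-root branches and the $\O(\delta)$ size of the remaining $n-2M$ roots. It also covers complex $\delta$, which your proposal does not address.

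\textbf{Sign slip in your limit display.} With prefactor $C_1(-2\i)^{2N}$ and $(-1)^l$ inside the sum, the Bloch term picks up an extra factor $(-1)^N$ compared with $-c_N\cos(\alpha L)$. For $n=2N$ the correct limit is $C_12^{2N}\bigl[\sum_lC^{\rm per}_{I,l}(-w^2)^{N-l}-2\cos(\gamma L)/\prod_jt_j\bigr]=C_12^{2N}\det(\mathcal C^\gamma(k_0)-w^2I)$.
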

\begin{proof}
Using Lemma \ref{thm: f(k;0) form} and $f(k;\delta)\equiv g(k;\delta)$, $g(\cdot;0)$ has a zero of order $n(k_0)$ at $k_0$. When $\delta\to0$, $g(\cdot;\delta)$ converges to $g(\cdot;0)$ uniformly on any compact set and thus, Rouch\'e's theorem implies that there are exactly $n$ zeros near $k_0$ when $|\delta|$ is sufficiently small. Put $z=k-k_0$ and expand
\[
g(k_0+z;\delta)=\sum_{q,l\geq0}c_{ql}z^q\delta^l.
\]
Set $h=\delta^{1/2}$ and define
\[
H(y,h):=h^{-n}g(k_0+hy;h^2)=\sum_{q,l\geq0}c_{ql}h^{q+2l-n}y^q.
\]
Proposition~\ref{thm: tr_G_order} gives $c_{ql}=0$ whenever
$q+2l<n$ for the trace part. The Bloch term starts at order
$\delta^N$, and $n\leq2N$, so the same conclusion holds for $g$.
It contributes to $q+2l=n$ only when $n=2N$.
Consequently, $H$ extends analytically across $h=0$. The coefficient
formula in Proposition~\ref{thm: tr_G_order} gives
\begin{align*}
H(y,0):=\sum_{q+2l=n}c_{ql}y^q&=C_1\sum_{l=0}^m 2^{2l}(-2\i)^{2N-2l}C_{I,l}^{\text{per}}y^{n-2l}-\mathbf{1}_{m=N}\cdot  \frac{2^{2N+1}}{r_1r_2\cdots r_N}\cos(\alpha L)
\\&=\begin{cases}
C_1(-1)^{N+m}2^{2N}y^{n-2m}\sum_{l=0}^mC_{I,l}^{\text{per}}(-y^2)^{m-l},&n<2N,\\
C_12^{2N}\left[\sum_{l=0}^N C_{I,l}^{\text{per}}(-y^2)^{N-l}-\frac{2\cos(\gamma L)}{\prod_{j=1}^{2N}t_j}\right],&n=2N.
\end{cases}    
\end{align*}
Compared with Lemma \ref{lem: spec of capacitance matrix}, all zeros of $H(\cdot,0)$, counted with multiplicity, are given by
\[
\underbrace{0,\ldots,0}_{n-2M},\quad
\pm\sqrt{\lambda_1},\pm\sqrt{\lambda_2},\ldots,\pm\sqrt{\lambda_M},
\]
where $\lambda_1,\lambda_2,\ldots,\lambda_M$ are the $M$ positive eigenvalues of $\mathcal{C}^\gamma(k_0)$, counted with their multiplicities. Let \(c\in\R\) be a root of \(H(\,\cdot\,,0)\) of multiplicity \(d\). Choose a small circle \(\Gamma\) around \(c\) that contains no other root and on which \(H(y,0)\ne0\). For sufficiently small \(|h|\), $H(y,h)\ne0$ on $\Gamma$, and precisely \(d\) roots lie inside it. Denote them by \(y_1(h),\ldots,y_d(h)\), counted with multiplicity. Then $y_j(0)=c,j=1,2,\ldots,d$.

First, take a real $h\ne0$. Then $\delta=h^2>0$. Each $k_0+hy_j(h)$ is a resonance, so Proposition \ref{thm: k real} makes it real. Hence, the $y_j(h)$'s are real. Consider
\[
Q(h)=\frac{1}{2\pi \i}\int_\Gamma(y-c)^2\frac{\partial_y H(y,h)}{H(y,h)}\dd y.
\]
The denominator stays nonzero on the fixed circle \(\Gamma\) for all small complex \(h\). The integrand is therefore analytic in \(h\), uniformly on \(\Gamma\), so \(Q\) is analytic near zero. Summing residues gives
\[
Q(h)=\sum_{j=1}^d\left(y_j(h)-c\right)^2.
\]
For real $h$, the right hand side is non-negative. Since $Q(0)=0$, the restriction of $Q$ to the real axis has a local minimum at zero. Thus, $Q'(0)=0$ and its Taylor expansion gives $Q(h)=\mc{O}(h^2),h\to  0,h\in\R$. Then for $j=1,2,\ldots,d$,
\begin{align}\label{equ: y_j(h) estimate real}
(y_j(h)-c)^2\leq Q(h)=\mc{O}(h^2)\Longrightarrow y_j(h)=c+\mc{O}(h),\quad h\to0,h\in\R.
\end{align}
To extend (\ref{equ: y_j(h) estimate real}) to complex $h$, consider the polynomial
\begin{align}\label{equ: prod w}
\prod_{j=1}^d\left[w-(y_j(h)-c)\right]:=w^d+a_1(h)w^{d-1}+\cdots+a_d(h).
\end{align}
For each $l=1,2,\ldots,d$, the contour integrals
\[
S_l(h)=\frac{1}{2\pi \i}\int_\Gamma(y-c)^l\frac{\partial_y H(y,h)}{H(y,h)}\dd y=\sum_{j=1}^d\left(y_j(h)-c\right)^l
\]
are analytic by the same argument used for $Q$. Newton's identities express $a_l$ recursively as a polynomial in $S_1,\cdots,S_l$, proving that $a_l(h)$ is analytic. For real $h\ne0$, the root--coefficient formulas together with (\ref{equ: y_j(h) estimate real}) give
\[
a_l(h)=(-1)^l\sum_{1\leq j_1<\cdots<j_l\leq d}\prod_{s=1}^l\left(y_{j_s}(h)-c\right)=\mc{O}(h^l).
\]
Since \(a_l\) is analytic for complex \(h\), this real-axis bound forces every Taylor coefficient of \(a_l\) of degree below \(l\) to vanish. Hence,
\[
a_l(h)=h^l b_l(h),
\]
where \(b_l\) is analytic and bounded near zero. For complex $h\ne0$, substituting $w=hv$ into (\ref{equ: prod w}) and dividing $h^d$ gives
\[
v^d+b_1(h)v^{d-1}+\cdots+b_d(h)=0.
\]
Because these coefficients are uniformly bounded, the roots \(v\) are uniformly bounded. Consequently
\begin{align}\label{equ: y_j(h) estimate complex}
y_j(h)=c+\mc{O}(h),\quad h\to0,h\in\C.
\end{align}
Applying (\ref{equ: y_j(h) estimate complex}) first to the roots $c=\pm\sqrt{\lambda_j}$, we obtain
\[
k_j^{\pm}(\delta)=k_0+h(\pm\sqrt{\lambda_j}+\mc{O}(h))=k_0\pm\sqrt{\lambda_j\delta}+\mc{O}(\delta),\quad j=1,2,\cdots,M.
\]
Applying (\ref{equ: y_j(h) estimate complex}) to $c=0$, whose multiplicity is $n-2M$, gives the remaining $n-2M$ resonances with the form  $k_0+\mc{O}(\delta)$.

Now consider the exceptional case $n=2N,\cos(\gamma L)=1$. In this case $n-2M=2$. If $k_0\ne0$, each segment transfer matrix equals its
resonant sign times $I$, and hence, $A(k_0;\delta)=\sigma I$ with $\sigma=(-1)^{\sum_{j=1}^{2N}m_j}=\e^{\i\alpha L}$. Moreover, the trigonometric formula for the trace shows that
$D(k_0+z;\delta)$ is even in $z$: equivalently, for $z\ne0$,
the segment matrices at $k_0+z$ are the corresponding signed
matrices at $z$, conjugated by $\operatorname{diag}(1,(k_0+z)/z)$.
If $k_0=0$, Example~\ref{ex: k=0} instead gives an upper triangular
matrix with trace $2$; it does not give $A(0;\delta)=I$.
Its trace is again even in $k$. In either case
$g(k_0;\delta)=\partial_kg(k_0;\delta)=0$.
To show that the multiplicity of $k_0$ is exactly two, we need to show that $\partial_{kk}g(k_0;\delta)\ne0$. To this end, using the coefficient in Proposition \ref{thm: tr_G_order} with \(n=2N\) and \(l=N-1\) gives
\[
\frac{1}{2}\partial_{kk}g(k_0;\delta)=-2^{2N}C_1C_{\Omega,N-1}^{\text{per}}\delta^{N-1}+\mc{O}(\delta^N),
\]
where $C_1$ defined in (\ref{equ: C_1 def}) is nonzero, and
\[
C_{\Omega,N-1}^{\text{per}}=\frac{\left(\sum_{j=1}^N t_{2j-1}\right)\left(\sum_{j=1}^N t_{2j}\right)}{\prod_{j=1}^{2N}t_j}>0,
\]
which proves that $\partial_{kk}g(k_0;\delta)\ne0$ for sufficiently small $|\delta|$.
\end{proof}
\begin{remark}\label{rem: N=1}
For \(N=1\), the discriminant is explicitly $D(k;\delta)=\Phi(k,\delta)/\delta$ with $\Phi(k,\delta)$ given in (\ref{Phi(k,delta) def}).
If \(k_0\ne0\) satisfies
\(r\ell k_0=m_1\pi\) and \(sk_0=m_2\pi\),
Taylor expansion of the dispersion equation, after
multiplication by \(\delta\), gives
\[
 (k-k_0)^2
 =\frac{2(1-\cos(\gamma L))}{r^2\ell s}\delta
 +\mc O\bigl(|k-k_0|^3
       +\delta|k-k_0|+\delta^2\bigr),
\]
Thus, the periodic capacitance matrix reduces to the
scalar \(2(1-\cos(\gamma L))/(r^2\ell s)\).
If only one interval is resonant at \(k_0\), the
dispersion equation multiplied by \(\delta\) has
a simple zero at \((k,\delta)=(k_0,0)\);
the implicit function theorem then gives
\(k-k_0=\mc O(\delta)\).
At \(k_0=0\), the scalar  capacitance gives the usual acoustic square-root scale, while at $\alpha=0$ the exact zero mode is constant.
\end{remark}
\begin{remark}[Fixed parameters and multiplicities]
The error bounds in Theorem~\ref{thm: k(delta) expand} are for fixed
$k_0$, geometry, and $\alpha$. When $n<2N$, the positive leading
coefficients do not depend on $\alpha$; this does not assert that
the exact bands are flat, since their higher-order terms can depend
on $\alpha$. For $n=2N$ and the exceptional phase, the two zeros
at $k_0=0$ are a double zero in $k$, with only a one-dimensional
eigenspace for positive contrast. At $k_0\ne0$ the eigenspace is
two-dimensional. Negative $k$ gives the opposite sign of frequency;
the physical positive-frequency bands are obtained by restricting
to $k\geq0$.
\end{remark}

\begin{corollary}[Spectral exclusion away from the limiting set]
\label{cor:gaps}
Let $K\subset\mathbb R$ be compact and disjoint from $E$. There
exists $\delta_K>0$ such that, for all $0<\delta<\delta_K$ and
all $\alpha\in Y^*$, no Bloch wavenumber belongs to $K$.
In particular, every closed subinterval between consecutive
positive points of $E$ that stays away from its endpoints lies
in a spectral gap for sufficiently small contrast.
\end{corollary}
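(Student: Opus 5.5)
The plan is to turn the question into a uniform non-vanishing statement for the normalized dispersion function $f(k;\delta)$ of \eqref{eq: f(z; sigma) expansion}, uniform in $\alpha$. By Theorem~\ref{thm: resonant conition original} and the normalization preceding Lemma~\ref{thm: f(k;0) form}, $k$ is a Bloch wavenumber for $\alpha$ if and only if $f(k;\delta)=0$. Writing
\[
 c(\delta)=\prod_{i=1}^N\frac{4\sigma_i}{(1+\sigma_i)^2},
\]
this reads
\[
 c(\delta)\operatorname{tr}M_{\mathrm{tot}}(k;\delta)=c(\delta)\,2\cos(\alpha L).
\]
The only dependence on $\alpha$ is the term $c(\delta)\,2\cos(\alpha L)$. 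Its modulus is at most $2|c(\delta)|=\O(\delta^N)$, uniformly in $\alpha\in Y^*$.

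First, I will write $F(k;\delta):=c(\delta)\operatorname{tr}M_{\mathrm{tot}}(k;\delta)$. Lemma~\ref{thm: f(k;0) form} and the computation before it show that $F(\cdot;\delta)\to f(\cdot;0)=(2\i)^{2N}\prod_{j=1}^{2N}\sin(\mathfrak t_jk)$ uniformly on compact subsets of $\C$, in particular on $K$, as $\delta\to0$. Since $K$ is compact and disjoint from $E$, which is exactly the zero set of $f(\cdot;0)$, continuity gives
\[
 \mu:=\min_{k\in K}|f(k;0)|>0.
\]
Next, I will choose $\delta_K$ so that for $0<\delta<\delta_K$ we have $\sup_K|F(\cdot;\delta)-f(\cdot;0)|<\mu/3$ and $2|c(\delta)|<\mu/3$. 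I will also require $\delta_K\le\min_j r_j$, so that the normalization is valid and nonzero. Then for every $\alpha\in Y^*$ and every $k\in K$,
\[
 |f(k;\delta)|\ge \mu-\mu/3-\mu/3>0 .
\]
Hence no Bloch wavenumber lies in $K$. For real $K$ and $\delta>0$, Proposition~\ref{thm: k real} is consistent with this, though it is not needed.

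The bandgap statement then follows by taking $K=[a+\epsilon,b-\epsilon]$ with $a<b$ consecutive positive points of $E$. This set is disjoint from $E$ because $E$ is discrete, being a finite union of lattices. I would add that uniformity in $\alpha$ means that the union over $\alpha$ of the bands misses $K$.

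The only point needing care is uniformity in $\alpha$. It is handled because $\alpha$ enters only through the bounded term $2\cos(\alpha L)$, multiplied by a factor that vanishes as $\delta\to0$. I expect no real obstacle beyond checking that the convergence in Lemma~\ref{thm: f(k;0) form} is uniform on compacts, which the paper already states.
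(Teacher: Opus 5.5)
Your proposal is correct and follows essentially the same argument as the paper: a positive minimum of $|f(\cdot;0)|$ on $K$, uniform convergence of the trace part on compacts, and the observation that the only $\alpha$-dependent term is a bounded cosine multiplied by the $\O(\delta^N)$ normalization factor. Your explicit $\mu/3$ splitting simply writes out the paper's one-line claim that the convergence is uniform in $\alpha$.
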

\begin{proof}
The continuous function $|f(k;0)|$ has a strictly positive minimum
on $K$. The convergence in Lemma~\ref{thm: f(k;0) form} is uniform
also in real $\alpha$: the only $\alpha$-dependent term is a
bounded cosine multiplied by $\O(\delta^N)$. Hence, $f(k;\delta)$
does not vanish on $K$, uniformly over $Y^*$, for sufficiently
small $\delta>0$.
\end{proof}

\begin{figure}[htbp]
\centering
\includegraphics[width=\textwidth]{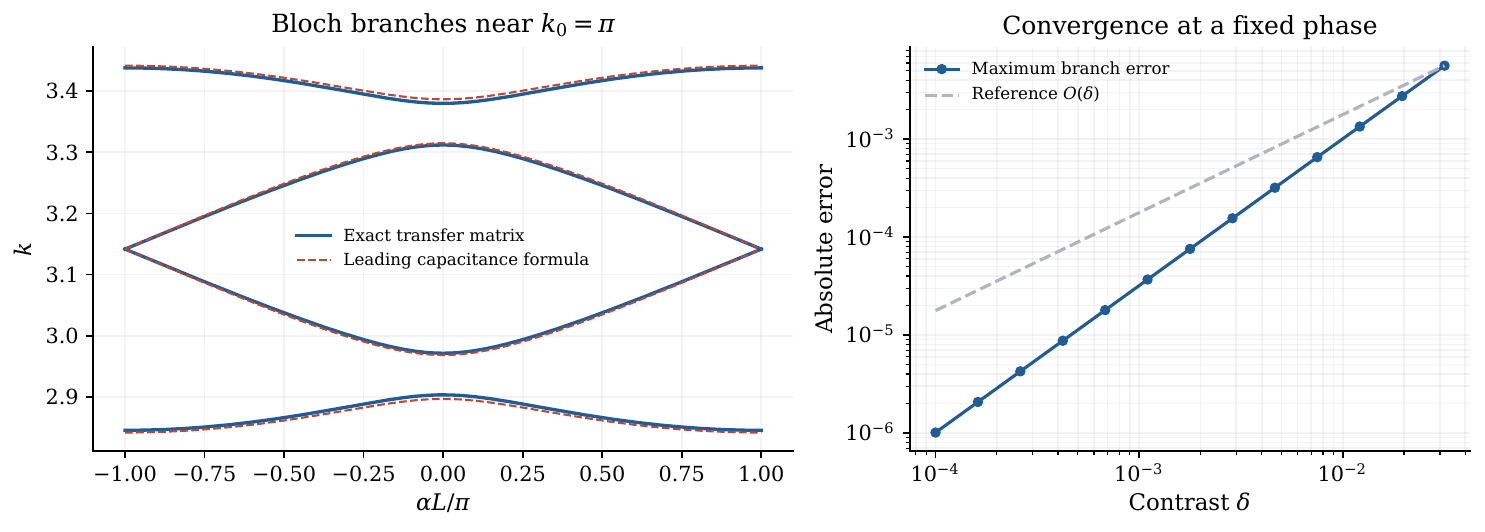}
\caption{Left: exact Bloch branches near $k_0=\pi$ (solid) and
the capacitance approximation $k_0\pm\sqrt{\delta\lambda_j}$
(dashed), for $N=2$, $r_1=r_2=1$, $\ell_1=\ell_2=1$,
$s_1=1$, $s_2=2$, and $\delta=0.03$. Here $L=5$ and
$\gamma L=\alpha L+5\pi$. Right: the maximum error over the four
branches at $\alpha L=0.37\pi$, compared with a reference of
order $\delta$. The observed faster decay in this fully resonant
example is compatible with the general $\O(\delta)$ bound; it
does not establish a sharper bound for arbitrary configurations.}
\label{fig:bands}
\end{figure}

\subsection{Eigenmodes}\label{subsec: eigenmodes}
We now study the eigenmode asymptotics for
\(k_0\in E\setminus\{0\}\) with \(n(k_0)=2N\).
This extends the full-resonance condition \(n(0)=2N\)
beyond the subwavelength regime, although the
zero-frequency eigenmode asymptotics differ substantially
from those at nonzero \(k_0\).
The former were established in
\cite[Lemma 4.10]{ammari2023edge}.
For completeness and their application to the Zak phase
in Section~\ref{sec: Zak phase}, we recall them for \(N=2\)
and simple eigenvalues of \(\mc C^\alpha(0)\) in
Proposition~\ref{prop:lowmodes}; here
\eqref{equ: gamma def} gives \(\gamma=\alpha\).

When \(n(k_0)=2N\), all resonators and spacings belong
to \(I(k_0)\), and the leading-order matching conditions
reduce to a finite-dimensional eigenvalue problem
governed by \(\mc C^\gamma(k_0)\).
When \(n(k_0)<2N\), the resonant set consists of cyclic
connected components separated by nonresonant intervals.
The different asymptotic orders of their propagation
matrices lead to more complicated matching conditions.
The case \(N=2\) is treated in
Appendix~\ref{app:asymptotics N2}, while the general
analysis is left to future work. Recall the notation in (\ref{equ: A_gamma def}), when $n=2N$, the matrices $\mc{L},\mc{S}$ are invertible and thus we have
\begin{align}\label{equ: C^gamma decomposition}
\mc{C}^\gamma(k_0)=\mc{L}^{-1}A_\gamma^H\mc{S}^{-1}A_\gamma.
\end{align}
Let the associated  matrix $\mc{D}^\gamma(k_0)$ be defined by
\begin{align}\label{equ: D^gamma decomposition}
\mc{D}^\gamma(k_0)=\mc{S}^{-1}A_\gamma \mc{L}^{-1}A_\gamma^H,
\end{align}
where $A_\gamma$ acts by
$(A_\gamma q)_j=q_{j+1}-q_j$, with $q_{N+1}=\e^{\i\gamma L}q_1$.
In particular, the definitions remain unambiguous for $N=2$.
The following lemma records their spectral properties.
\begin{lemma}
When $n=2N$, the following results about the spectra of $\mc{C}^\gamma(k_0)$ and $\mc{D}^\gamma(k_0)$ hold. 
\begin{itemize}
    \item The eigenvalues of \(\mathcal C^\gamma(k_0)\) and \(\mathcal D^\gamma(k_0)\) are non-negative and coincide, counted with multiplicity. Furthermore, both matrices are diagonalizable, so the algebraic and geometric multiplicities of every eigenvalue agree.
    \item Every eigenvalue of \(\mathcal C^\gamma(k_0)\) has multiplicity at most two. Moreover, a double eigenvalue can occur only if $
    \cos(\gamma L)\in\{1,-1\}$.
\end{itemize}
\end{lemma}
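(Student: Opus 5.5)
Since $n=2N$, the weights $t_j(k_0)=t_j$ are all finite and positive, so $\mathcal L$ and $\mathcal S$ are invertible positive diagonal matrices. The plan is to reduce both $\mathcal C^\gamma(k_0)$ and $\mathcal D^\gamma(k_0)$ to Hermitian matrices by diagonal similarity. Set $B=\mathcal S^{-1/2}A_\gamma\mathcal L^{-1/2}$. Then
\[
\mathcal C^\gamma=\mathcal L^{-1/2}(B^HB)\mathcal L^{1/2},\qquad \mathcal D^\gamma=\mathcal S^{-1/2}(BB^H)\mathcal S^{1/2}.
\]
So $\mathcal C^\gamma$ is similar to $B^HB$ and $\mathcal D^\gamma$ is similar to $BB^H$. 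Both are Hermitian positive semidefinite, hence diagonalizable with nonnegative eigenvalues. Similarity preserves diagonalizability and algebraic and geometric multiplicities. Since $B$ is square, $B^HB$ and $BB^H$ have the same characteristic polynomial (Sylvester), so the two spectra coincide with multiplicity. This settles the first bullet.

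For the second bullet I will argue directly on the eigenvalue equation. Because $\mathcal C^\gamma$ is diagonalizable, multiplicity equals the dimension of $\ker(\mathcal C^\gamma-\lambda)$. The equation $(\mathcal C^\gamma q)_i=\lambda q_i$ is a three-term recurrence
\[
\theta_{2i-2}(q_i-q_{i-1})+\theta_{2i-1}(q_i-q_{i+1})=\lambda t_{2i-1}^{-1}\cdots
\]
More precisely, it is the relation from \eqref{eq:mathcalCkgamma_def} with all $\theta_j>0$. Since the coefficient $\theta_{2i-1}$ of $q_{i+1}$ is nonzero, $q_{i+1}$ is determined by $(q_{i-1},q_i)$ for $i=1,\dots,N$, using the extended indices $q_0=\e^{-\i\gamma L}q_N$ and $q_{N+1}=\e^{\i\gamma L}q_1$. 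Equivalently, introduce the $2\times2$ one-step transfer matrices $T_i(\lambda)$ with $(q_i,q_{i+1})^\top=T_i(\lambda)(q_{i-1},q_i)^\top$, and their product $\mathcal T(\lambda)=T_N\cdots T_1$. An eigenvector corresponds exactly to a vector $(q_0,q_1)$ with $\mathcal T(\lambda)(q_0,q_1)^\top=\e^{\i\gamma L}(q_0,q_1)^\top$, and this correspondence is injective. So the kernel has dimension at most $2$.

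For the double case, note that $\det T_i=\theta_{2i-2}/\theta_{2i-1}$. Cyclically, the product $\prod_i\theta_{2i-2}/\theta_{2i-1}$ telescopes to $1$, so $\det\mathcal T=1$. A two-dimensional kernel forces $\mathcal T(\lambda)=\e^{\i\gamma L}I$. Taking determinants gives $\e^{2\i\gamma L}=1$, i.e.\ $\cos(\gamma L)=\pm1$.

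The main obstacle is getting the determinant bookkeeping of $T_i$ right with the cyclic index conventions, especially for $N=2$ where wrap-around terms add, and for $N=1$ where the recurrence degenerates. I will handle this by working with the $q$-sequence on $\mathbb Z$ satisfying $q_{i+N}=\e^{\i\gamma L}q_i$, which removes the special-case conventions.
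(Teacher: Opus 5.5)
Your proof is correct and follows the paper's argument: the same diagonal similarity via $B=\mathcal S^{-1/2}A_\gamma\mathcal L^{-1/2}$ reduces $\mathcal C^\gamma(k_0)$ and $\mathcal D^\gamma(k_0)$ to the Hermitian positive semidefinite matrices $B^HB$ and $BB^H$, and the same $2\times2$ transfer matrices for the three-term recurrence have determinants whose cyclic product is $1$, so a two-dimensional eigenspace forces $\mathcal T(\lambda)=\mathrm{e}^{\mathrm{i}\gamma L}I$ and hence $\mathrm{e}^{2\mathrm{i}\gamma L}=1$. The only blemish is the garbled right-hand side in your first recurrence display; it should read $\lambda q_i$, as your next sentence indicates.
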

\begin{proof}
Set \(B=\mc S^{-1/2}A_\gamma\mc L^{-1/2}\).
By \eqref{equ: C^gamma decomposition} and
\eqref{equ: D^gamma decomposition},
\[
\mc L^{1/2}\mc C^\gamma(k_0)\mc L^{-1/2}=B^HB,\quad \mc S^{1/2}\mc D^\gamma(k_0)\mc S^{-1/2}=BB^H.
\]
These Hermitian positive semidefinite matrices have
the same eigenvalues, counted with multiplicity.
Thus both original matrices are diagonalizable
and have identical nonnegative spectra. For an eigenpair \((\lambda,\bm v)\) of
\(\mc C^\gamma(k_0)\), the eigenvalue equation becomes
\begin{equation*}\label{equ: eigen recurrence equations}
-\theta_{2i-2}v_{i-1}
+(\theta_{2i-2}+\theta_{2i-1})v_i
-\theta_{2i-1}v_{i+1}
=\lambda v_i,\qquad 1\le i\le N,
\end{equation*}
with \(v_0=\e^{-\i\gamma L}v_N\) and
\(v_{N+1}=\e^{\i\gamma L}v_1\).
Since \(\theta_{2i-1}>0\), the recurrence is uniquely
determined by \((v_0,v_1)\).
Hence each eigenspace has dimension at most two,
and diagonalizability gives the same bound on
algebraic multiplicity. Let \(M_\lambda=T_N(\lambda)\cdots T_1(\lambda)\),
where
\[
T_i(\lambda)=
\begin{pmatrix}
0&1\\
-\theta_{2i-2}/\theta_{2i-1}
&(\theta_{2i-2}+\theta_{2i-1}-\lambda)/\theta_{2i-1}
\end{pmatrix}.
\]
By \eqref{eq:coupling},
\(\det M_\lambda
=\prod_{i=1}^N\theta_{2i-2}/\theta_{2i-1}=1\).
The boundary conditions are equivalent to
\(M_\lambda(v_0,v_1)^\top
=\e^{\i\gamma L}(v_0,v_1)^\top\).
If the eigenspace is two-dimensional, its initial
vectors span \(\mathbb C^2\), so
\(M_\lambda=\e^{\i\gamma L}I\).
Taking determinants gives \(\e^{2\i\gamma L}=1\),
and therefore \(\cos(\gamma L)\in\{1,-1\}\).
\end{proof}
To describe the eigenmodes beyond the subwavelength regime, it is convenient to introduce the rescaled coordinate
\begin{align}\label{equ: Y(x) def}
Y(x):=\int_{x_1^-}^{x}\left(\sum_{i=1}^Nr_i\mathbf 1_{(x_i^-,x_i^+)}(s)+\mathbf 1_{\mathbb R\setminus\overline D}(s)\right)\,\dd s,
\qquad x\in[x_1^-,x_{N+1}^-].
\end{align}
Then \(Y\) is continuous and piecewise linear. Moreover,
\[
Y(x_{i}^+)-Y(x_i^-)=r_i\ell_i,\quad Y(x_{i+1}^-)-Y(x_i^+)=s_i,\qquad 1\le i\le N.
\]
In particular, \(Y\) rescales the physical length \(\ell_i\) of each resonator to \(r_i\ell_i\), while it leaves each spacing \(s_i\) unchanged. We will use \(Y\) to express the phase of the eigenmodes.
Now we can give the result on the eigenmodes.
\begin{theorem}[Asymptotic behavior of the eigenmodes when \(n=2N\)]\label{thm: eigenmode n=2N}
Assume that \(k_0\ne 0,n(k_0)=2N\), and let \(\lambda\) be an eigenvalue of \(\mathcal C^\gamma(k_0)\). By Theorem \ref{thm: k(delta) expand}, let $k(\delta)=k_0+c\delta^{1/2}+\mc{O}(\delta)$ with $c\in\left\{\sqrt{\lambda},-\sqrt{\lambda}\right\}$ be a resonance of (\ref{equ: Bloch problem}) and let $u_\delta(x)$ be the associated eigenmode.
\begin{itemize}
\item Suppose that \(\lambda>0\). For all $\delta$ with sufficiently small $|\delta|$, there exist eigenvectors
\[
\bm a_\delta=(a_{1,\delta},\ldots,a_{N,\delta})^\top,
\qquad
\bm b_\delta=(b_{1,\delta},\ldots,b_{N,\delta})^\top,
\]
of \(\mathcal C^\gamma(k_0)\) and \(\mathcal D^\gamma(k_0)\), respectively, with bounded norms and satisfying
\[
A_\gamma\bm a_\delta=c\mc{S}\bm b_\delta,\quad A_\gamma^H\bm b_\delta=c\mc{L}\bm a_\delta.
\]
After a suitable normalization, $u_\delta$ satisfies
\[
u_\delta(x)=\begin{cases}
b_{i,\delta}\sin\bigl(k_0Y(x)\bigr)
+
\mc{O}(\delta^{1/2}),&x\in[x_i^+,x_{i+1}^-],\\
\delta^{1/2}\,a_{i,\delta}\cos\bigl(k_0Y(x)\bigr)
+
\mc{O}(\delta),&x\in[x_i^-,x_i^+],
\end{cases}\qquad i=1,2,\ldots,N.
\]
Moreover, if \(\lambda\) is simple, which holds whenever $\cos(\gamma L)\ne\pm1$, one can take $\bm a_\delta\equiv\bm a,\bm b_\delta\equiv \bm b$, where $\bm a$ and $\bm b$ are fixed eigenvectors of $\mc{C}^\gamma(k_0)$ and $\mc{D}^\gamma(k_0)$, respectively.
\item Suppose that \(\lambda=0\), which occurs only when $\cos(\gamma L)=1$. In this case, by \Cref{thm: k(delta) expand}, the two corresponding resonances are exactly given by $k(\delta)\equiv k_0.$ Then for every $(A,B)\ne(0,0)$, $u_\delta$ can be chosen such that
\[
u_\delta(x)=\begin{cases}
    B\sin\big(k_0Y(x)\big)+\mc{O}(\delta^{1/2}),&x\in[x_i^+,x_{i+1}^-],\\
    \delta^{1/2}A\cos\bigl(k_0Y(x)\bigr)+\mc{O}(\delta),&x\in[x_i^-,x_i^+],
\end{cases}\qquad i=1,2,\ldots,N.
\]
\end{itemize}
All the above estimates are uniform on the indicated intervals.
\end{theorem}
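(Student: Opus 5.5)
The plan is to work directly with the transfer-matrix description of Section~\ref{sec:transfer}, in the scaled variable $V=(u,\delta^{\mp1/2}u')$. Since $n(k_0)=2N$ and $k_0\ne0$, every segment matrix at $k_0$ equals $(-1)^{m_j}I$. The strategy has three parts: expand each segment matrix at $k=k_0+z$ with $z=c h+\O(h^2)$ and $h=\delta^{1/2}$, reduce the Bloch condition to a linear algebra problem in the endpoint data, and then rebuild $u_\delta$ on each interval from that data.

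\textbf{Step 1: expand the segment matrices.} For a resonator,
\[
T_i^{\mathrm{Res}}=(-1)^{m_{2i-1}}\Bigl[I+\begin{pmatrix}0& h\,r_i\ell_i z/(r_ik_0)\cdot(\cdot)\\ -r_i^2k_0\ell_i z/h & 0\end{pmatrix}+\cdots\Bigr].
\]
Precisely, $\sin(r_ik\ell_i)=(-1)^{m}r_i\ell_iz+\O(z^3)$, so the off-diagonal entries are $(-1)^{m}\bigl(h\ell_i z/k_0,\;-r_i^2\ell_ik_0z/h\bigr)+\ldots$. With $z=\O(h)$ the $(2,1)$ entry is $\O(1)$, namely $-(-1)^m t_{2i-1}k_0c$, and the $(1,2)$ entry is $\O(\delta)$. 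Spacings behave the other way: the $(1,2)$ entry $\sin(ks)/(kh)\approx(-1)^m s_ic/k_0$ is $\O(1)$, and the $(2,1)$ entry is $\O(\delta)$. Hence each segment matrix is $\pm\bigl(I+\text{nilpotent }\O(1)\bigr)+\O(h)$. Resonators act as lower-triangular shears and spacings as upper-triangular shears.

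\textbf{Step 2: reduce the Bloch condition to the coupled eigenproblem.} Track the signed values $(-1)^{\sum_{j<\cdot}m_j}V$ at the interfaces. Write $p_i$ for the first component of $V_i^-$, which stays constant up to $\O(h)$ across resonator $i$, and $w_i$ for the second component, which stays constant across spacings. Each resonator updates $w$ by $-t_{2i-1}k_0c\,p_i$, and each spacing updates $p$ by $s_ic\,w/k_0$. Impose the quasi-periodicity $V_{N+1}^-=\e^{\i\alpha L}V_1^-$; the accumulated sign $(-1)^{\sum m_j}$ turns $\alpha$ into $\gamma$, exactly as in \eqref{equ: gamma def}. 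This gives, up to $\O(h)$, the system $A_\gamma\bm a=c\mathcal S\bm b$, $A_\gamma^H\bm b=c\mathcal L\bm a$, after identifying $\bm a$ with a rescaling of the $p$'s and $\bm b$ with the rescaled $w$'s. The identification uses $t_{2i}=s_i$, $t_{2i-1}=r_i^2\ell_i$, and a factor $k_0$; I expect the adjoint and phase placement to need care. Eliminating one unknown gives $\mathcal C^\gamma\bm a=c^2\bm a=\lambda\bm a$ and $\mathcal D^\gamma\bm b=\lambda\bm b$.

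\textbf{Step 3: pass from the approximate system to exact eigenvectors.} This is the main obstacle. The exact kernel vector of $A(k(\delta);\delta)-\e^{\i\alpha L}I$ yields data $(\bm a_\delta',\bm b_\delta')$ that solve the system only up to $\O(h)$. To get exact eigenvectors, normalize so that $\|\bm a'\|+\|\bm b'\|=1$. Then project onto the $\lambda$-eigenspace using the Riesz projection of the block matrix
\[
\begin{pmatrix}0&c^{-1}\mathcal L^{-1}A_\gamma^H\\ c^{-1}\mathcal S^{-1}A_\gamma&0\end{pmatrix},
\]
which is diagonalizable by the preceding lemma. Because the eigenvalue $c$ of this block matrix is isolated (using $\lambda>0$), the projected vectors $\bm a_\delta,\bm b_\delta$ are exact eigenvectors and differ from the data by $\O(h)$. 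When $\lambda$ is simple, the eigenspace is one-dimensional, so a phase normalization makes the eigenvectors fixed.

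\textbf{Step 4: reconstruct the eigenmode and treat $\lambda=0$.} Inside spacing $i$, $u=p\cos(k(Y-Y_i^+))+w\,h^{-1}\cdot h\sin(\cdots)/k$. Since $p$ is $\O(h)$ relative to $w$ after rescaling, this gives $b_i\sin(k_0Y)$ after absorbing the signs $\cos(k_0Y(x_i^+))=\pm1$. Inside resonators, $u=p\cos(\cdot)+\O(\delta)$, which gives $\delta^{1/2}a_i\cos(k_0Y)$; the $\delta^{1/2}$ comes from the relative scaling fixed in Step 2. Replacing $k$ by $k_0$ inside trigonometric functions costs $\O(h)$ uniformly on bounded intervals. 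For $\lambda=0$ with $\cos(\gamma L)=1$, Theorem~\ref{thm: k(delta) expand} shows $k\equiv k_0$, so $A(k_0;\delta)=\e^{\i\alpha L}I$ exactly. Every initial vector $V_1^-=(\delta^{1/2}A,B\,k_0)$, suitably scaled, then gives an eigenmode. Propagating it exactly with all $\sin(\mathfrak t_jk_0)=0$ produces the stated profiles; the error terms arise only from matching the scaling between the two components.
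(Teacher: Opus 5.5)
Your proposal is correct and follows essentially the paper's route. Sign-adjusted interface data turn the Bloch condition into $A_\gamma\bm a=c\mathcal S\bm b+\O(\delta^{1/2})$ and $A_\gamma^H\bm b=c\mathcal L\bm a+\O(\delta^{1/2})$; a Riesz projection gives exact eigenvectors; exact propagation reconstructs the profiles; and $\lambda=0$ is exact because $A(k_0;\delta)=\e^{\i\alpha L}I$.

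The only variation is in the projection step. You project the pair $(\bm a,\bm b)$ with the block matrix $\begin{pmatrix}0&\mathcal L^{-1}A_\gamma^H\\ \mathcal S^{-1}A_\gamma&0\end{pmatrix}$ at the eigenvalue $c$, whereas the paper eliminates $\bm b$ and projects $\bm a$ with the Riesz projection of $\mathcal C^\gamma$. Three small fixes are needed. Drop the $c^{-1}$ factors you displayed, since with them the relevant eigenvalue is $1$, not $c$. Justify semisimplicity of $c$ by the similarity via $\operatorname{diag}(\mathcal L^{1/2},\mathcal S^{1/2})$ to a Hermitian matrix. In the $\lambda=0$ case, write the initial vector in the scaled variables as $V_1^-=\delta^{1/2}(A,k_0B)^\top$.
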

\begin{proof}
Since $k(\delta)=k_0+c\delta^{1/2}+\mc{O}(\delta)$, (\ref{equ:defmj}) gives
\begin{align}\label{equ: cos sin expand}
\cos(k(\delta)\mathfrak{t}_j)=(-1)^{m_j}+\mc{O}(\delta),\sin(k(\delta)\mathfrak{t}_j)=(-1)^{m_j}c\delta^{1/2}\mathfrak{t}_j+\mc{O}(\delta),\quad 1\leq j\leq 2N.
\end{align}
All estimates below are uniform because $N$ is fixed and finite.

\noindent\textbf{Step 1: Interface matching.} Recall matrices $T_j^{\text{Res}}$ and $T_j^{\text{Sp}}$ in (\ref{equ: T_j Res Sp def}) and the definition  of $\bm t$ in (\ref{def:vector bm t}), for $1\leq j\leq N$, (\ref{equ: cos sin expand}) gives
\begin{align}\label{equ: T_j^res expand}
T_j^{\text{Res}}=(-1)^{m_{2j-1}}\begin{pmatrix}
    1+\mc{O}(\delta)&\mc{O}(\delta)\\
    -ck_0t_{2j-1}+\mc{O}(\delta^{1/2})&1+\mc{O}(\delta)
\end{pmatrix},
\end{align}
\begin{align}\label{equ: T_j^sp  expand}
T_j^{\text{Sp}}=(-1)^{m_{2j}}\begin{pmatrix}
    1+\mc{O}(\delta)&ct_{2j}/k_0+\mc{O}(\delta^{1/2})\\
    \O(\delta)&1+\mc{O}(\delta)
\end{pmatrix}.
\end{align}
Recall the interface vectors $V_j^\pm$ given in (\ref{equ: V_j^pm def}), we introduce the sign-adjusted interface vectors
\begin{equation}\label{equ:  W_i^pm def}
\begin{aligned}
W_i^-:=(-1)^{\sum_{j=1}^{2i-2}m_j}\delta^{-1/2}V_i^-=\begin{pmatrix}
    a_i^-(\delta)&k_0b_i^-(\delta)
\end{pmatrix}^\top,\\
W_i^+:=(-1)^{\sum_{j=1}^{2i-1}m_j}\delta^{-1/2}V_i^+=\begin{pmatrix}
    a_i^+(\delta)&k_0b_i^+(\delta)
\end{pmatrix}^\top.
\end{aligned}
\end{equation}
Normalize \(u_\delta\) so that  $\max_{1\leq i\leq N}\left\{||W_i^-||,||W_i^+||\right\}=1$. Since $k_0\ne0$, all $a_i^\pm,b_i^\pm$ are bounded. (\ref{equ: T_j^res expand})(\ref{equ: T_j^sp  expand}) together with the propagation relation (\ref{equ: V_j^pm propagation}) then yield
\begin{equation}\label{equ: ab pm relation}
\begin{aligned}
a_i^+(\delta)-a_i^-(\delta)=\mc{O}(\delta),\quad b_i^-(\delta)-b_i^+(\delta)=ct_{2i-1}a_i^-(\delta)+\mc{O}(\delta^{1/2}).\\
a_{i+1}^-(\delta)-a_i^+(\delta)=ct_{2i}b_i^+(\delta)+\mc{O}(\delta^{1/2}),\quad b_{i+1}^-(\delta)-b_i^+(\delta)=\mc{O}(\delta).
\end{aligned}
\end{equation}
By \((\ref{equ: gamma def})\), the Bloch condition \(V_{N+1}^-=\mathrm e^{\i\alpha L}V_1^-\) becomes \(W_{N+1}^-=\mathrm e^{\i\gamma L}W_1^-\). Thus, with the conventions
\[
a_{N+1}^-(\delta)=\mathrm e^{\i\gamma L}a_1^-(\delta),
\qquad b_0^+(\delta)=\mathrm e^{-\i\gamma L}b_N^+(\delta),
\]
equations (\ref{equ: ab pm relation}) imply, for \(1\le i\le N\),
\[
a_{i+1}^-(\delta)-a_i^-(\delta)=ct_{2i}b_i^+(\delta)+\mc{O}(\delta^{1/2}),\quad
b_{i-1}^+(\delta)-b_i^+(\delta)=ct_{2i-1}a_i^-(\delta)+\mc{O}(\delta^{1/2}).
\]
Indeed, the second relation at \(i=1\) follows from
\(b_1^-(\delta)=\mathrm e^{-\i\gamma L}b_N^+(\delta)+\mc{O}(\delta)\). Set
\[
\bm a(\delta):=(a_1^-(\delta),a_2^-(\delta),\cdots,a_N^-(\delta))^\top,\quad
\bm b(\delta):=(b_1^+(\delta),b_2^+(\delta),\cdots,b_N^+(\delta))^\top.
\]
Recalling the matrices $A_\gamma,\mc{L},\mc{S}$ in (\ref{equ: A_gamma def}), the preceding relations give
\begin{align}\label{equ: coupling system asymptotic}
A_\gamma\bm a(\delta)=c\mc{S}\bm b(\delta)+\mc{O}(\delta^{1/2}),\quad A_\gamma^H\bm b(\delta)=c\mc{L}\bm a(\delta)+\mc{O}(\delta^{1/2}).
\end{align}
Moreover, (\ref{equ: ab pm relation}) and the Bloch condition express every \(a_i^+(\delta)\) and \(b_i^-(\delta)\) in terms of the components of \(\bm a(\delta),\bm b(\delta)\), up to \(\mc{O}(\delta)\). Thus, the normalization implies
\[
1\le C_1\bigl(\|\bm a(\delta)\|+\|\bm b(\delta)\|\bigr)+C_2|\delta|,
\]
while also giving a uniform upper bound on all components. Hence, for sufficiently small \(|\delta|\),
\begin{align}\label{equ: a(delta) b(delta) bound}
0<c_0\le \|\bm a(\delta)\|+\|\bm b(\delta)\|\le C,
\end{align}
where \(c_0,C>0\) are independent of \(\delta\).

\medskip
\noindent\textbf{Step 2: The case $\lambda>0$.} Since $c^2=\lambda>0$, we have $c\ne0$ and the first equation in (\ref{equ: coupling system asymptotic}) gives
\begin{align}\label{equ: b(delta) a(delta) relation}
\bm b(\delta)=\frac{1}{c}\mc{S}^{-1}A_\gamma \bm a(\delta)+\mc{O}(\delta^{1/2}).
\end{align}
Substituting into the second equation in (\ref{equ: coupling system asymptotic}) and using
\((\ref{equ: C^gamma decomposition})\), we obtain
\begin{align}\label{equ: C^gamma-lambda I estimate}
\left(\mc{C}^\gamma(k_0)-\lambda I\right)\bm a(\delta)=\mc{O}(\delta^{1/2}).
\end{align}
Let $\Gamma_\lambda$ be a positively oriented circle enclosing $\lambda$ but no other eigenvalues of $\mc{C}^\gamma(k_0)$, and define
\[
P_\lambda:=\frac{1}{2\pi\i}\int_{\Gamma_\lambda}\left(zI-\mc{C}^\gamma(k_0)\right)^{-1}\dd z.
\]
Since $\mc{C}^\gamma(k_0)$ is diagonalizable, $P_\lambda$ is the projection onto $E_\lambda:=\operatorname{Ker}\left(\mc{C}^\gamma(k_0)-\lambda I\right)$ along the direct sum of the eigenspaces associated with the remaining eigenvalues. In particular, $P_\lambda$ and $\mc{C}^\gamma(k_0)$ commute and the restriction $\left(\mc{C}^\gamma(k_0)-\lambda I\right)|_{\operatorname{Ran}(I-P_\lambda)}$ has a bounded inverse. Then (\ref{equ: C^gamma-lambda I estimate}) gives
\[
\left(\mc{C}^\gamma(k_0)-\lambda I\right)(I-P_\lambda)\bm a(\delta)=(I-P_\lambda)\left(\mc{C}^\gamma(k_0)-\lambda I\right)\bm a(\delta)=\mc{O}(\delta^{1/2}).
\]
Applying the inverse of $\left(\mc{C}^\gamma(k_0)-\lambda I\right)|_{\operatorname{Ran}(I-P_\lambda)}$ gives $(I-P_\lambda)\bm a(\delta)=\mc{O}(\delta^{1/2})$. Setting $\bm a_\delta:=P_\lambda\bm a(\delta)$, we obtain
\begin{align*}\label{equ: a(delta) a_delta diff}
\mc{C}^\gamma(k_0)\bm a_\delta=\lambda\bm a_\delta,\quad\bm a(\delta)=\bm a_\delta+\mc{O}(\delta^{1/2}).
\end{align*}
Moreover, $\bm a_\delta\ne0$ for sufficiently small $|\delta|$. Indeed, if $\bm a_\delta=0$, then $\bm a(\delta)=\mc{O}(\delta^{1/2})$ and thus (\ref{equ: b(delta) a(delta) relation}) gives $\bm b(\delta)=\mc{O}(\delta^{1/2})$, contradicting (\ref{equ: a(delta) b(delta) bound}). Define $\bm b_\delta:=c^{-1}\mc{S}^{-1}A_\gamma\bm a_\delta$. Then \((\ref{equ: b(delta) a(delta) relation})\) gives 
\[
\bm b(\delta)=\bm b_\delta+\O(\delta^{1/2}).
\]
Moreover, by \((\ref{equ: C^gamma decomposition})\) and \(c^2=\lambda\),
\[
A_\gamma\bm a_\delta=c\mathcal S\bm b_\delta,\qquad
A_\gamma^H\bm b_\delta
=\frac1c\mathcal L\mathcal C^\gamma(k_0)\bm a_\delta
=c\mathcal L\bm a_\delta.
\]
Consequently, \((\ref{equ: D^gamma decomposition})\) yields
\[
\mathcal D^\gamma(k_0)\bm b_\delta
=\mathcal S^{-1}A_\gamma\mathcal L^{-1}A_\gamma^H\bm b_\delta
=c^2\bm b_\delta
=\lambda\bm b_\delta.
\]
The estimates for \(\bm a(\delta)-\bm a_\delta\) and \(\bm b(\delta)-\bm b_\delta\), together with (\ref{equ: a(delta) b(delta) bound}) imply that \(\|\bm a_\delta\|+\|\bm b_\delta\|\) is bounded above and away from zero. Rescaling \(u_\delta,\bm a_\delta,\bm b_\delta\) by the same factor, bounded above and away from zero, we may therefore assume $\|\bm a_\delta\|+\|\bm b_\delta\|=1$ without changing the error orders.

If \(\lambda\) is simple, then $E_\lambda=\operatorname{span}\{\bm a\}$
for a fixed eigenvector \(\bm a\neq0\). Hence, $\bm a_\delta=\eta(\delta)\bm a$ for some \(\eta(\delta)\) bounded above and bounded away from zero. Then
\[
\bm b_\delta
=\frac{1}{c}\mc{S}^{-1}A_\gamma \bm a_\delta=
\eta(\delta)\bm b,
\qquad\text{where }
\bm b:=
\frac1c\mathcal S^{-1}A_\gamma\bm a.
\]
Replacing \(u_\delta\) by \(\eta(\delta)^{-1}u_\delta\), we can take $\bm a_\delta\equiv\bm a,\bm b_\delta\equiv\bm b$ without changing the error orders.

\medskip
\noindent\textbf{Step 3: Reconstruction of the eigenmode.} 
Let \(a_{i,\delta}\) and \(b_{i,\delta}\) denote the components of \(\bm a_\delta\) and \(\bm b_\delta\). Recall (\ref{equ:  W_i^pm def}), we can write $V_i^\pm$ as
\[
V_i^-=(-1)^{\sum_{j=1}^{2i-2}m_j}\delta^{1/2}\begin{pmatrix}
    a_i^-(\delta)\\k_0 b_i^-(\delta)
\end{pmatrix},\quad
V_i^+=(-1)^{\sum_{j=1}^{2i-1}m_j}\delta^{1/2}\begin{pmatrix}
    a_i^+(\delta)\\k_0 b_i^+(\delta)
\end{pmatrix}.
\]
On the $i$\textsuperscript{th} resonator,  recalling (\ref{equ: V(x) propagation}) and $k(\delta)=k_0+\mc{O}(\delta^{1/2})$, propagation from $V_i^-$ gives
\begin{align*}
u_\delta(x)&=(-1)^{\sum_{j=1}^{2i-2}m_j}\delta^{1/2}\begin{pmatrix}
\cos(r_ik(\delta)(x-x_i^-))&\frac{\delta^{1/2}}{r_ik(\delta)}\sin(r_ik(\delta)(x-x_i^-))    
\end{pmatrix}\begin{pmatrix}
a_i^-(\delta)\\k_0b_i^-(\delta)    
\end{pmatrix}\\
&=(-1)^{\sum_{j=1}^{2i-2}m_j}\delta^{1/2}a_i^-(\delta)\cos(r_ik(\delta)(x-x_i^-))+\mc{O}(\delta)\\
&=(-1)^{\sum_{j=1}^{2i-2}m_j}\delta^{1/2}a_{i,\delta}\cos(r_ik_0(x-x_i^-))+\mc{O}(\delta).
\end{align*}
Using the rescaled coordinate $Y(x)$ defined in (\ref{equ: Y(x) def}), we finally obtain
\[
u_\delta(x)=\delta^{1/2}a_{i,\delta}\cos(k_0Y(x))+\mc{O}(\delta),\quad x\in[x_i^-,x_i^+],1\leq i\leq N.
\]
Similarly, on the $i$\textsuperscript{th} spacing,
\begin{align*}
u_\delta(x)&=(-1)^{\sum_{j=1}^{2i-1}m_j}\delta^{1/2}\begin{pmatrix}
\cos(k(\delta)(x-x_i^+))&\frac{1}{k(\delta)\delta^{1/2}}\sin(k(\delta)(x-x_i^+))    
\end{pmatrix}\begin{pmatrix}
a_i^+(\delta)\\k_0b_i^+(\delta)    
\end{pmatrix}\\
&=(-1)^{\sum_{j=1}^{2i-1}m_j}b_i^+(\delta)\sin(k(\delta)(x-x_i^+))+\mc{O}(\delta^{1/2})\\
&=(-1)^{\sum_{j=1}^{2i-1}m_j}b_{i,\delta}\sin(k_0(x-x_i^+))+\mc{O}(\delta^{1/2}).
\end{align*}
Using the rescaled coordinate $Y(x)$ again, we obtain
\[
u_\delta(x)=b_{i,\delta}\sin(k_0Y(x))+\mc{O}(\delta^{1/2}),\quad x\in[x_i^+,x_{i+1}^-],1\leq i\leq N.
\]

\medskip
\noindent\textbf{Step 4: The case $\lambda=0$.} By Lemma \ref{lem: spec of capacitance matrix}, $\lambda=0$ occurs if and only if $\cos(\gamma L)=1$, equivalently, $\e^{\i\gamma L}=1$. By Theorem \ref{thm: k(delta) expand}, the two corresponding resonances are exactly given by $k(\delta)\equiv k_0$. At $k=k_0$, the $T_i^{\text{Res}}$ and $T_i^\text{Sp}$ in (\ref{equ: T_j Res Sp def}) reduce to
\begin{align}\label{equ: T_i Res Sp I}
T_{i}^\text{Res}=(-1)^{m_{2i-1}}I,\quad T_i^{\text{Sp}}=(-1)^{m_{2i}}I.
\end{align}
Set $\sigma=(-1)^{\sum_{j=1}^{2N}m_j}$, then the $A(k_0;\delta)$ in (\ref{equ: A(k;delta) def}) has the form $A(k_0;\delta)=\sigma I$. Since $\e^{\i\gamma L}=1$, (\ref{equ: gamma def}) gives $\e^{\i\alpha L}=\sigma$. Consequently, for any initial vector \(V_1^-\in\mathbb C^2\setminus\{0\}\), the corresponding solution satisfies
$V_{N+1}^-
=
A(k_0;\delta)V_1^-
=
\sigma V_1^-
=
\e^{\mathrm i\alpha L}V_1^-$.
Thus, it satisfies the Bloch condition exactly and therefore corresponds to an eigenmode associated with \(k_0\). To obtain the asymptotic expression stated in the theorem, for any fixed $(A,B)\ne(0,0)$, choose the initial data of $u_\delta$ at $x_1^-$ as $V_1^-=\delta^{1/2}(A,k_0B)^\top$. Then (\ref{equ: T_i Res Sp I}) gives
\[
V_i^-=\delta^{1/2}(-1)^{\sum_{j=1}^{2i-2}m_j}\begin{pmatrix}
    A\\k_0B
\end{pmatrix},\quad
V_i^+=\delta^{1/2}(-1)^{\sum_{j=1}^{2i-1}m_j}\begin{pmatrix}
    A\\k_0B
\end{pmatrix}
\]
Similar to the procedures in Step 3, on the $i$\textsuperscript{th} resonator,
\begin{align*}
u_\delta(x)&=(-1)^{\sum_{j=1}^{2i-2}m_j}\delta^{1/2}\begin{pmatrix}
\cos(r_ik_0(x-x_i^-))&\frac{\delta^{1/2}}{r_ik_0}\sin(r_ik_0(x-x_i^-))    
\end{pmatrix}\begin{pmatrix}
A\\k_0B    
\end{pmatrix}\\
&=(-1)^{\sum_{j=1}^{2i-2}m_j}\delta^{1/2}A\cos(r_ik_0(x-x_i^-))+\mc{O}(\delta)=\delta^{1/2}A\cos(k_0Y(x))+\mc{O}(\delta).
\end{align*}
On the $i$\textsuperscript{th} spacing,
\begin{align*}
u_\delta(x)&=(-1)^{\sum_{j=1}^{2i-1}m_j}\delta^{1/2}\begin{pmatrix}
\cos(k_0(x-x_i^+))&\frac{1}{k_0\delta^{1/2}}\sin(k_0(x-x_i^+))    
\end{pmatrix}\begin{pmatrix}
A\\k_0B    
\end{pmatrix}\\
&=(-1)^{\sum_{j=1}^{2i-1}m_j}B\sin(k_0(x-x_i^+))+\mc{O}(\delta^{1/2})=B\sin(k_0Y(x))+\mc{O}(\delta^{1/2}).
\end{align*}
This completes the proof.
\end{proof}
% Theorem~\ref{thm: eigenmode n=2N} concerns nonzero reference
% wavenumbers \(k_0\) satisfying \(n(k_0)=2N\).
% Although \(n(0)=2N\) as well, the eigenmode asymptotics
% at \(k_0=0\) differ substantially from those at nonzero \(k_0\).
% For completeness and to facilitate the discussion of the
% Zak phase in Section~\ref{sec: Zak phase}, we recall the
% zero-frequency eigenmode asymptotics established in
% \cite[Lemma 4.10]{ammari2023edge}.
% For simplicity, we restrict to \(N=2\) and simple eigenvalues
% \(\lambda\) of \(\mc C^\alpha(0)\).
% In this case, \eqref{equ: gamma def} gives \(\gamma=\alpha\).
% The full statement is given in Proposition~\ref{prop:lowmodes}.

\section{Exact Dirac points}\label{sec:dirac}
We first characterize exact Dirac points for arbitrary \(N\), and then derive a complete local classification for \(N=2\). We now define the notion of a Dirac point. The Brillouin zone \(Y^*=(-\pi/L,\pi/L]\) is identified with the circle \(\mathbb R/(2\pi/L)\mathbb Z\), so that the endpoints \(\pm\pi/L\) represent the same Bloch phase, and local expansions near them are understood in a local coordinate modulo \(2\pi/L\). We call the space consisting of the eigenmodes of (\ref{equ: Bloch problem}) the eigenspace. By Proposition \ref{thm: k real}, the resonances \(k\) can be restricted to be real whenever $\delta>0$. This leads to the following definition.

\begin{definition}
Fix \(\delta>0\). A point \((k_*,\alpha_*)\in\mathbb R\times Y^*\) is called a \textit{Dirac point} if the eigenspace of (\ref{equ: Bloch problem}) at \((k_*,\alpha_*)\) has dimension two, and the two branches of resonances crossing at \((k_*,\alpha_*)\) satisfy
\begin{align}\label{equ: linear crossing}
k_\pm(\alpha)
=
k_*\pm v_*(\alpha-\alpha_*)
+
\mathcal O\bigl((\alpha-\alpha_*)^2\bigr),
\qquad\text{for some }v_*>0.
\end{align}
We call a \(k_*\in\mathbb R\) a \textit{Dirac wavenumber} if there exists some \(\alpha_*\in Y^*\) such that \((k_*,\alpha_*)\) is a Dirac point.
\end{definition}
This definition concerns the exact problem (\ref{equ: Bloch problem}) at fixed \(\delta\). The coincidence of the leading terms in the asymptotic expansions of two resonances \(k(\delta)\) as \(\delta\to0\) does not, by itself, imply the existence of an exact Dirac point. As for an exact Dirac point, recalling the matrix $A(k;\delta)$ in (\ref{equ: A(k;delta) def}), we have the following theorem.
\begin{theorem}\label{thm: Dirac point equiv}
Let $\delta>0$. Then $(k_*,\alpha_*)$ is a Dirac point if and only if
\begin{align}\label{equ: A(k,delta) Dirac}
A(k_*;\delta)=\sigma I,\quad \e^{\i\alpha_* L}=\sigma,\qquad\text{for some }\sigma\in\{1,-1\}.
\end{align}
\end{theorem}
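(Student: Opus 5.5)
Both directions follow from Theorem~\ref{thm: resonant conition original}: the eigenspace at $(k,\alpha)$ is $\operatorname{Ker}(A(k;\delta)-\e^{\i\alpha L}I)$, transported through the cell by the propagation matrices.

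\emph{Necessity.} Suppose $(k_*,\alpha_*)$ is a Dirac point. A two-dimensional eigenspace means $\operatorname{Ker}(A(k_*;\delta)-\e^{\i\alpha_* L}I)=\mathbb C^2$, so $A(k_*;\delta)=\e^{\i\alpha_*L}I$. Since $\det A=1$, we get $\e^{2\i\alpha_*L}=1$, hence $\e^{\i\alpha_*L}=\sigma\in\{\pm1\}$. This direction is immediate.

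\emph{Sufficiency.} Assume \eqref{equ: A(k,delta) Dirac}. The eigenspace is two-dimensional by the same kernel computation. It remains to prove the linear crossing \eqref{equ: linear crossing} with $v_*>0$. First I exclude $k_*=0$: by Example~\ref{ex: k=0}, $A(0;\delta)$ has upper-right entry $S_\delta>0$ when $\delta>0$, so it is never $\pm I$.

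Set $z=k-k_*$ and $D(k)=\operatorname{tr}A(k;\delta)$. Then $D(k_*)=2\sigma$. Since $\det A\equiv1$, we have $D^2-4=(\operatorname{tr}A)^2-4\det A$, which is the discriminant of the characteristic polynomial. Differentiating gives $D'(k_*)=\operatorname{tr}A'(k_*)$. With $A(k_*)=\sigma I$, differentiating $\det A=1$ gives $\operatorname{tr}(A(k_*)^{-1}A'(k_*))=0$, i.e.\ $\sigma D'(k_*)=0$, so $D'(k_*)=0$. The key step, and the expected main obstacle, is to show $\sigma D''(k_*)<0$. This is where the Gram positivity enters.

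To show it, write $A'(k_*)=\sigma J$ with $J$ traceless. The standard variation formula, from the ODE \eqref{equ: V(x) diff equ} $V'=KV$, gives
\[
\partial_kA(k)=A(k)\int_{x_1^-}^{x_{N+1}^-}\Phi(x)^{-1}\partial_kK(x,k)\Phi(x)\,\dd x ,
\]
where $\Phi$ is the fundamental matrix. At $k_*$ this yields
\[
J=\int\Phi^{-1}\begin{pmatrix}0&0\\-2k_*b&0\end{pmatrix}\Phi\,\dd x .
\]
Differentiating $\det A=1$ twice and using $\operatorname{tr}A'=0$ should give $\sigma D''(k_*)=2\det J+\sigma\operatorname{tr}(\ldots)$. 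It may be cleaner to work instead with $\cos(\alpha L)=D(k)/2$ directly.

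The expansion near $k_*$ is $D(k)=2\sigma+\tfrac12D''(k_*)z^2+\O(z^3)$, while $2\cos(\alpha L)=2\sigma-\sigma L^2(\alpha-\alpha_*)^2+\O((\alpha-\alpha_*)^4)$. Matching the two gives $z^2=-\sigma L^2(\alpha-\alpha_*)^2/D''(k_*)+\ldots$. So I need $-\sigma D''(k_*)=:\kappa>0$, and then $v_*=L/\sqrt{\kappa}$. Because the quantity $\tfrac12\operatorname{tr}(J^2)-\tfrac12(\operatorname{tr}J)^2$ reduces to $-\det J$, I expect $\kappa$ to equal $4\det(\cdot)$ of a Gram matrix $\bigl(\int b\,\overline{\phi_i}\phi_j\bigr)\cdot$const.

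Concretely, take the two solutions $\phi_1,\phi_2$ with Wronskian-normalized initial data, i.e.\ the first components of the columns of $\Phi$. Then $J$ has entries $\pm2k_*\int b\,\phi_i\phi_j$. Hence $\det J=4k_*^2\bigl[\int b\phi_1^2\int b\phi_2^2-(\int b\phi_1\phi_2)^2\bigr]$, which is $>0$ by Cauchy--Schwarz, since $\phi_1,\phi_2$ are real and linearly independent and $b>0$.

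Finally, I need the analytic structure to legitimately give two branches. With $D''(k_*)\neq0$, the equation $D(k_*+z)-2\sigma=F(\alpha)$ can be written as $z^2u(z)=(\alpha-\alpha_*)^2w(\alpha)$ with $u,w$ analytic and nonvanishing. Taking square roots gives two analytic real branches $z=\pm(\alpha-\alpha_*)\sqrt{w/u}$. This yields \eqref{equ: linear crossing}, and both branches are real by Proposition~\ref{thm: k real}.
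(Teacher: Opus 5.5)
Your route is the paper's own. You identify the eigenspace with $\operatorname{Ker}(A(k;\delta)-\mathrm{e}^{\mathrm{i}\alpha L}I)$ and get necessity from $\det A\equiv1$. For sufficiency you exclude $k_*=0$, use the variation formula $A^{-1}\partial_kA=\int\Phi^{-1}(\partial_kK)\Phi\,\mathrm{d}x$ and the Gram determinant $\det J=4k_*^2\det G$ with $G=\bigl(\int b\,\phi_i\phi_j\,\mathrm{d}x\bigr)_{i,j}$. You then expand $D$ to second order and close with an implicit-function argument for the two branches.

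However, the step you flagged as the crux is never carried out. The formula you wrote for it, $\sigma D''(k_*)=2\det J+\sigma\operatorname{tr}(\ldots)$, has the wrong sign and a spurious extra term. Taken literally it makes $\kappa=-\sigma D''(k_*)<0$. Then $w/u<0$ in your last step, and $z=\pm(\alpha-\alpha_*)\sqrt{w/u}$ would not be real, so there would be no real crossing. The sign is exactly what the theorem hinges on. Your remark that $\tfrac12\operatorname{tr}(J^2)-\tfrac12(\operatorname{tr}J)^2=-\det J$ is the right ingredient, but you never connect it to $D''$.

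The missing identity is the paper's $\operatorname{tr}C=-\det B$, and it takes two lines. Write $A(k_*+z;\delta)=\sigma\bigl(I+zJ+z^2C+\mathcal{O}(z^3)\bigr)$. The $z$- and $z^2$-coefficients of $\det A\equiv1$ give $\operatorname{tr}J=0$ and $\operatorname{tr}C=-\det J$. Hence
\[
D(k_*+z;\delta)=2\sigma-\sigma\det J\,z^2+\mathcal{O}(z^3),\qquad
\kappa=-\sigma D''(k_*)=2\det J=8k_*^2\det G>0 .
\]
Equivalently, differentiating $\det A=\tfrac12\bigl[(\operatorname{tr}A)^2-\operatorname{tr}(A^2)\bigr]$ twice at $k_*$ gives $\sigma D''(k_*)=\operatorname{tr}(J^2)=-2\det J$, with no additional trace term.

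Two smaller slips should also be fixed. First, matching $2\sigma-\tfrac{\sigma\kappa}{2}z^2$ with $2\sigma-\sigma L^2(\alpha-\alpha_*)^2$ gives $z^2=2L^2(\alpha-\alpha_*)^2/\kappa$. So $v_*=L\sqrt{2/\kappa}=L/(2|k_*|\sqrt{\det G})$, not $L/\sqrt{\kappa}$; this does not affect $v_*>0$. Second, strict Cauchy--Schwarz needs $\phi_1,\phi_2$ to be linearly independent, which you assert without proof. It holds because $c_1\phi_1+c_2\phi_2\equiv0$ forces $V\equiv0$ through $v_1'=av_2$ with $a>0$, hence $\bm c=0$. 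With these repairs your argument coincides with the paper's proof.
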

\begin{proof}
Fix $\delta>0$. We first consider the case \(k_*=0\). Example \(\ref{ex: k=0}\) shows that \(A(0;\delta)\ne\pm I\) and that \(k_*=0\) is a resonance only when \(\mathrm e^{\mathrm i\alpha _*L}=1\). In this case, the eigenspace consists precisely of constant functions and hence has dimension one. Thus, neither condition holds at \(k_*=0\). We henceforth assume $k_*\in \R\setminus\{0\}$. Recall the continuous vector $V(x)$ introduced in (\ref{equ: V(x) def}), it is a solution to the system (\ref{equ: V(x) diff equ}). For each \(k\), let \(F(x,k)\) be the fundamental matrix of the system (\ref{equ: V(x) diff equ}), \emph{i.e.} $\partial_xF(x,k)=K(x,k)F(x,k),$ and $F(x_1^-,k)=I$.
More explicitly, using the matrices $T_j^{\text{Res}}(a)$ and $T^{\text{Sp}}(a)$ in Section \ref{sec:transfer}, $F$ is obtained by successive propagation through the resonators and spacings:
\[
F(x,k)=\begin{cases}
    T_j^{\text{Res}}(x-x_j^-)F(x_j^-,k),&x\in[x_j^-,x_{j}^+],\\
    T^{\text{Sp}}(x-x_j^+)F(x_j^+,k),&x\in[x_j^+,x_{j+1}^-].
\end{cases}
\]
Since these propagation matrices have determinant one, recall the definition  of $A(k;\delta)$ in (\ref{equ: A(k;delta) def}),
\[
\det F(x,k)\equiv 1,\qquad F(x_{N+1}^-,k)=A(k;\delta).
\]
In particular, \(F(x,k)\) is invertible. Every solution of system (\ref{equ: V(x) diff equ}) with initial value \(V_1^-=\bm c\in\mathbb C^2\) is given uniquely by $V(x)=F(x,k)\bm c$. In particular, we have $V_{N+1}^-=F(x_{N+1}^-,k)c=A(k;\delta)c$. Write the first row of \(F\) as \((f_1,f_2)\). The corresponding scalar solution is $u(x)=f_1(x,k)c_1+f_2(x,k)c_2$. The Bloch condition is $V_{N+1}^-=\e^{\i\alpha L}V_1^-$, or equivalently $\left(A(k;\delta)-\e^{\i\alpha L}I\right)c=0$. Consequently, the map
\[
\operatorname{Ker}\left(A(k;\delta)-\e^{\i\alpha L}I\right)\to\text{ the eigenspace of (\ref{equ: Bloch problem}) at  } (k,\alpha),\qquad c\to f_1c_1+f_2c_2
\]
is linear and surjective. It is also injective since $u\equiv0$ will give $V_1^-=0$. Thus, the eigenspace of (\ref{equ: Bloch problem}) and the above kernel have the same dimension.

If $(k_*,\alpha_*)$ is a Dirac point, it follows that
\[
\dim\operatorname{Ker}\left(A(k_*;\delta)-\e^{\i\alpha_* L}I\right)=2\Longrightarrow A(k_*;\delta)=\e^{\i\alpha_*L}I.
\]
Taking determinants gives $\e^{2\i\alpha_* L}=1$, so $\e^{\i\alpha_* L}=\sigma\in\{1,-1\}$. This proves necessity.

Conversely, suppose that (\ref{equ: A(k,delta) Dirac}) holds. Then  $\operatorname{Ker}\bigl(A(k_*,\delta)-\e^{\i\alpha_*L}I\bigr)=\mathbb C^2$, so the eigenspace of (\ref{equ: Bloch problem}) has dimension two. It remains to establish the linear crossing (\ref{equ: linear crossing}). Differentiating $\partial_x F=KF$ with respect to $k$ gives
\[
\partial_x(\partial_k F)=(\partial_k K)F+K\partial_kF\Longrightarrow\partial_x(F^{-1}\partial_kF)
=F^{-1}(\partial_kK)F.
\]
Since \(F(x_1^-,k)=I\) is independent of \(k\), integration over the cell yields
\[
A^{-1}\partial_kA
=
\int_{x_1^-}^{x_{N+1}^-}
F^{-1}(\partial_kK)F\,\dd x.
\]
Recall the explicit formula of $K(x,k)$ in (\ref{equ: V(x) diff equ}), and using $\det F\equiv1$, we compute
\begin{align}\label{equ: A^-1 partial_k A}
F^{-1}(\partial_k  K)F=2kb(x)\begin{pmatrix}
    f_1f_2&f_2^2\\-f_1^2&-f_1f_2
\end{pmatrix}\Longrightarrow A^{-1}\partial_kA=2k  JG(k),
\end{align}
where
\[
J=\begin{pmatrix}
    0&1\\-1&0
\end{pmatrix},\quad 
G(k)=\int_{x_1^-}^{x_{N+1}^-}b(x)\begin{pmatrix}
    f_1^2&f_1f_2\\f_1f_2&f_2^2
\end{pmatrix}\dd x.
\]
For $k\in\R$, the matrix $F$ is real, so $G(k)$ is real symmetric. Moreover, for $\bm c=(c_1,c_2)^\top\in\R^2$,
\[
\bm  c^\top G(k)\bm c=\int_{x_1^-}^{x_{N+1}^-}b(x)(c_1f_1+c_2f_2)^2\dd x\geq0.
\]
If this integral vanishes, then the positivity of \(b\) and the continuity of \(c_1f_1+c_2f_2\) imply that the corresponding scalar solution vanishes identically. The injectivity proved above then gives \(\bm c=0\). Hence, \(G(k)\) is positive definite. Set $B:=A(k_*;\delta)^{-1}\partial_k A(k_*;\delta)$. Then (\ref{equ: A^-1 partial_k A}) yields
\[
B=2k_* J G(k_*),\quad \operatorname{tr}B=0,\quad\det B=4k_*^2\det G(k_*)>0.
\]
Since $A(k_*;\delta)=\sigma I$, we can expand $A(k_*+z;\delta)$  as 
\[
A(k_*+z;\delta)=\sigma(I+zB+z^2C+\mc{O}(z^3))\quad\text{with some matrix }C.
\]
Since $\det A(k;\delta)\equiv 1$,
\[
1=1+z\operatorname{tr}B+z^2(\operatorname{tr}C+\det B)+\mc{O}(z^3),
\]
which gives $\operatorname{tr}C=-\det B$. Therefore, with $D(k;\delta)$ defined in (\ref{equ: A(k;delta) def}),
\begin{align}\label{equ: D(k;delta) expand}
D(k_*+z;\delta)=2\sigma-\sigma\det B\cdot z^2+\mc{O}(z^3).
\end{align}
In particular, $\partial_k D(k_*;\delta)=0,\partial_{kk}
D(k_*;\delta)=-2\sigma\det B\ne0$. Define
\[
H(z)=\frac{2-\sigma D(k_*+z;\delta)}{z^2},\quad H(0)=\det B.
\]
By (\ref{equ: D(k;delta) expand}), \(H\) is analytic near zero and positive for a sufficiently small real \(z\). For $\alpha$ near $\alpha_*$, the relation $\e^{\i\alpha_*L}=\sigma$ gives $\cos(\alpha L)=\sigma\cos((\alpha-\alpha_*)L)$. Hence, the dispersion relation $D(k;\delta)=2\cos(\alpha L)$ given in Theorem \ref{thm: resonant conition original} is equivalent to
\begin{align}\label{equ:  z^2H(z)}
z^2H(z)=2-2\cos((\alpha-\alpha_*)L)=4\sin^2\frac{(\alpha-\alpha_*)L}{2}\Longleftrightarrow z\sqrt{H(z)}=\pm 2\sin\frac{(\alpha-\alpha_*)L}{2}. 
\end{align}
The function \(z\sqrt{H(z)}\) has derivative \(\sqrt{\det B}>0\) at zero. Its derivative is therefore bounded below by a positive constant on a sufficiently small interval, on which it is strictly increasing. Its image contains a neighborhood of zero, so each sign in (\ref{equ:  z^2H(z)}) determines a unique local solution \(z_\pm(\alpha)\). The same derivative bound gives \(z_\pm(\alpha)=\mc{O}(\alpha-\alpha_*)\). Substituting this estimate into
\[
\sqrt{\det B}\,z_\pm(\alpha)+\mc{O}(z_\pm(\alpha)^2)
=
\pm (\alpha-\alpha_*)L+\mc{O}\left((\alpha-\alpha_*)^3\right)
\]
yields
\[
z_\pm(\alpha)=\pm\frac{L}{\sqrt{\det B}}(\alpha-\alpha_*)+\mc{O}\left((\alpha-\alpha_*)^2\right).
\]
Thus, the two local dispersion branches satisfy
\[
k_\pm(\alpha)
=
k_*\pm v_*(\alpha-\alpha_*)
+\mc{O}\bigl((\alpha-\alpha_*)^2\bigr),
\qquad
v_*=\frac{L}{2|k_*|\sqrt{\det G(k_*)}}>0.
\]
This proves that \((k_*,\alpha_*)\) is a Dirac point.
\end{proof}
\begin{remark}
At \(k_*=0\), Example~\ref{ex: k=0} shows that a resonance
occurs only when \(\e^{\i\alpha L}=1\), in which case the
eigenspace consists precisely of constant functions.
Thus, no Dirac wavenumber occurs at zero, although
\(n(0)=2N\).

In contrast, let \(k_*\ne0\) satisfy \(n(k_*)=2N\).
By \eqref{equ:defmj},
\(A(k_*;\delta)=(-1)^{\sum_{j=1}^{2N}m_j}I\).
Theorem~\ref{thm: Dirac point equiv} therefore gives a
Dirac point \((k_*,\alpha_*)\), where
\(\e^{\i\alpha_*L}=(-1)^{\sum_{j=1}^{2N}m_j}\),
or equivalently \(\e^{\i\gamma L}=1\). Retaining the terms absorbed into the remainders in
Step~4 of the proof of
Theorem~\ref{thm: eigenmode n=2N} gives the exact
representation for initial data
\(V_1^-=\sqrt\delta(A,k_*B)^\top\):
\begin{equation}\label{eq:exact-central-modes}
u_\delta(x)=
\begin{cases}
\sqrt\delta A\cos(k_*Y(x))
+\dfrac{\delta}{r_i}B\sin(k_*Y(x)),
&x\in[x_i^-,x_i^+],\\[4pt]
\sqrt\delta A\cos(k_*Y(x))
+B\sin(k_*Y(x)),
&x\in[x_i^+,x_{i+1}^-].
\end{cases}
\end{equation}
The choices \((A,B)=(\delta^{-1/2},0)\) and \((0,1)\)
give linearly independent initial vectors and hence,
by uniqueness of the initial-value problem, two exact
eigenmodes spanning the eigenspace.
The first is \(\cos(k_*Y(x))\) throughout the cell;
the second has spacing amplitude one and resonator
amplitude \(\delta/r_i\), as illustrated in
Fig.~\ref{fig:modes}.

Thus, although \(n(k_*)=2N\) in both cases, the
eigenspace is one-dimensional at zero and
two-dimensional at a nonzero Dirac wavenumber.
\end{remark}
\begin{figure}[htbp]
\centering
\includegraphics[width=\textwidth]{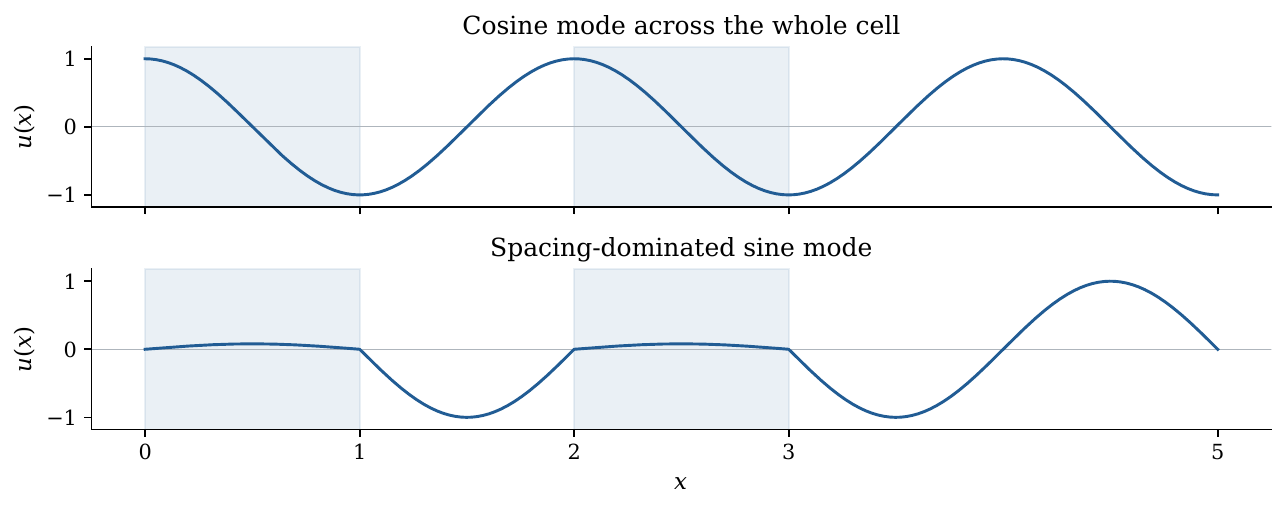}
\caption{Two exact independent central modes from
\eqref{eq:exact-central-modes}, for $N=2$, $r_i=1$,
$\ell_i=1$, $(s_1,s_2)=(1,2)$, $k_0=\pi$, and
$\delta=0.08$. Shading marks the resonators. The first mode is
$\cos(k_0Y)$ throughout the cell; the second has amplitude
$\delta$ in resonators and amplitude one in spacings.
The total phase is $5\pi$, so the Bloch multiplier is $-1$.}
\label{fig:modes}
\end{figure}
We now specialize to the case \(N=2\) and use Theorem \ref{thm: Dirac point equiv} to derive explicit conditions for Dirac points in terms of the geometric and material parameters.

\begin{corollary}[conditions for Dirac points when $N=2$]\label{thm: explict conditions N=2} Let $k\in\R\setminus\{0\},\delta>0$. Define
\[
h_j=\frac{1}{2}\left(\frac{r_j}{\delta}-\frac{\delta}{r_j}\right)\sin(r_j\ell_jk),\quad Z_j=\cos(r_j\ell_jk)+\frac{\i}{2}\left(\frac{r_j}{\delta}+\frac{\delta}{r_j}\right)\sin(r_j\ell_jk),\qquad j=1,2.
\]
Then $(k,\alpha)$ is a Dirac point if and only if, for some $\sigma\in\{1,-1\}$,  $\e^{\i\alpha  L}=\sigma$, and
\begin{align}\label{equ: hZ condition}
h_2\e^{-\i s_2k}=-\sigma  h_1\e^{-\i s_1k},\quad
Z_2\e^{\i s_2k}=\sigma\overline{Z_1\e^{\i s_1k}}.
\end{align}
If $0<\delta<\min\{r_1,r_2\}$, condition (\ref{equ: hZ condition}) admits the following exhaustive alternatives:
\begin{itemize}
    \item $h_1=h_2=0$, and there exist $m_1,m_3,p\in\Z$ such that $\sigma=(-1)^{m_1+m_3+p}$, and
    \begin{align}\label{equ: alternative1}
    r_1\ell_1k=m_1\pi,\quad r_2\ell_2k=m_3\pi,\quad (s_1+s_2)k=p\pi.
    \end{align}
    \item $h_1h_2\ne0$, and there exist $p\in\Z$ and $\eta\in\{1,-1\}$ such that $\sigma=(-1)^p\eta$, and
    \begin{align}\label{equ: alternative2}
    (s_2-s_1)k=p\pi,\quad h_2=-\eta h_1,\quad Z_2\e^{2\i s_1k}=\eta\overline{Z_1}.
    \end{align}
\end{itemize}
The case in which exactly one of $h_1,h_2$ vanishes is impossible.
\end{corollary}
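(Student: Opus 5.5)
The plan is to reduce everything to Theorem~\ref{thm: Dirac point equiv}. For $\delta>0$ and $k\in\R\setminus\{0\}$, that theorem says $(k,\alpha)$ is a Dirac point if and only if $A(k;\delta)=\sigma I$ and $\e^{\i\alpha L}=\sigma$ for some $\sigma\in\{\pm1\}$. Since $A(k;\delta)$ is similar to $M_{\mathrm{tot}}(k;\delta)$ via the matrix $P$ of Section~\ref{sec:capacitance}, and $\sigma I$ is invariant under conjugation, the condition is equivalent to $M_{\mathrm{tot}}(k;\delta)=\sigma I$.

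For $N=2$, group the factors as
\[
M_{\mathrm{tot}}=L(s_2k)P_2L(s_1k)P_1,\qquad P_j:=R(\sigma_j^{-1})L(r_j\ell_jk)R(\sigma_j),\quad \sigma_j=\delta/r_j .
\]
A direct $2\times2$ multiplication will give
\[
P_j=\begin{pmatrix} Z_j & \pm\i h_j\\ \mp\i h_j & \overline{Z_j}\end{pmatrix},
\]
where the diagonal entries are $\cos x\pm\frac{\i}{2}(\sigma_j+\sigma_j^{-1})\sin x$ and the off-diagonal entries are $\pm\frac{\i}{2}(\sigma_j^{-1}-\sigma_j)\sin x$. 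These match $Z_j$ and $h_j$ exactly, and $h_j$ is real for real $k$. In addition, $\det P_j=|Z_j|^2-h_j^2=1$. I will rewrite $M_{\mathrm{tot}}=\sigma I$ as
\[
L(s_2k)P_2=\sigma P_1^{-1}L(-s_1k),\qquad P_1^{-1}=\begin{pmatrix}\overline{Z_1}&\mp\i h_1\\ \pm\i h_1& Z_1\end{pmatrix}.
\]
Comparing entries, the $(1,1)$ entries give $Z_2\e^{\i s_2k}=\sigma\overline{Z_1}\e^{-\i s_1k}$, and an off-diagonal entry gives $h_2\e^{-\i s_2k}=-\sigma h_1\e^{-\i s_1k}$. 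This is \eqref{equ: hZ condition}. The remaining two entries are complex conjugates of these, because all matrices involved have the structure $\bigl(\begin{smallmatrix}a&b\\ \bar b&\bar a\end{smallmatrix}\bigr)$ and $\sigma$ is real. So the two stated equations are equivalent to the full matrix identity.

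For the alternatives, assume $0<\delta<\min r_j$. Then $r_j/\delta-\delta/r_j>0$, so $h_j=0$ if and only if $\sin(r_j\ell_jk)=0$. Taking moduli in the first equation gives $|h_1|=|h_2|$, which rules out the case where exactly one of them vanishes. The two remaining cases are handled as follows.
\begin{itemize}
\item If $h_1=h_2=0$, then $r_1\ell_1k=m_1\pi$ and $r_2\ell_2k=m_3\pi$, so $Z_j=(-1)^{m}$ with $m=m_1,m_3$. The $Z$-equation becomes $\e^{\i(s_1+s_2)k}=\sigma(-1)^{m_1+m_3}\in\{\pm1\}$. This gives $(s_1+s_2)k=p\pi$ and $\sigma=(-1)^{m_1+m_3+p}$.
\item If $h_1h_2\ne0$, the first equation forces $\e^{\i(s_1-s_2)k}$ to be real, because $h_2/h_1$ is real. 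Hence $(s_2-s_1)k=p\pi$. Substituting $\e^{-\i s_2k}=(-1)^p\e^{-\i s_1k}$ gives $h_2=-\eta h_1$ with $\eta=\sigma(-1)^p$. The $Z$-equation then becomes $Z_2\e^{2\i s_1k}=\eta\overline{Z_1}$.
\end{itemize}
Each step is reversible, so these alternatives are exhaustive and equivalent to \eqref{equ: hZ condition}.

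The main obstacle I expect is bookkeeping rather than ideas. The sign conventions in $R(\sigma)$ and $L$ determine whether the off-diagonal entries of $P_j$ are $+\i h_j$ or $-\i h_j$, and these signs must be tracked carefully to land exactly on the stated form of \eqref{equ: hZ condition}. I also need to check the conjugation structure that makes the two omitted entry equations redundant. I will first verify the formula for $P_j$ explicitly and then confirm the structure $\bigl(\begin{smallmatrix}a&b\\ \bar b&\bar a\end{smallmatrix}\bigr)$ for $L(x)$ and $P_j$ with real $k$; this class is closed under products and inverses.
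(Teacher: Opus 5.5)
Your proof is correct and follows the paper's argument. You reduce via Theorem~\ref{thm: Dirac point equiv} to the identity that the second resonator--spacing block equals $\sigma$ times the inverse of the first, compare entries, and then split on whether $h_1,h_2$ vanish, just as the paper does.

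The only difference is the basis. You work in the complex basis $P$ of Section~\ref{sec:capacitance}, where every factor has the form $\bigl(\begin{smallmatrix}a&b\\ \bar b&\bar a\end{smallmatrix}\bigr)$; your upper sign choice $P_j=\bigl(\begin{smallmatrix}Z_j&\i h_j\\ -\i h_j&\overline{Z_j}\end{smallmatrix}\bigr)$ is the right one. In that basis the two complex equations appear directly. The paper instead conjugates by the real matrix $Q=\operatorname{diag}(1,k\sqrt\delta)$ and recombines four real entry equations.
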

\begin{proof}
By Theorem \(\ref{thm: Dirac point equiv}\), \((k,\alpha)\) is a Dirac point if and only if
\[
\mathrm e^{\mathrm i\alpha L}=\sigma,\qquad
T_2^\text{Sp}T_2^{\text{Res}}T_1^\text{Sp}T_1^\text{Res}=\sigma I
\]
for some \(\sigma\in\{1,-1\}\). The matrix identity is equivalent to
\begin{align}\label{equ: T_2=T_1 inverse}
T_2^\text{Sp}T_2^{\text{Res}}=\sigma \left(T_1^\text{Res}\right)^{-1}\left(T_1^\text{Sp}\right)^{-1}
\end{align}
Set \(Q=\operatorname{diag}(1,k\sqrt\delta)\). Let $\widehat{T_j}^\text{Res}:=Q^{-1}T_j^\text{Res}Q,\widehat{T_j}^\text{Sp}:=Q^{-1}T_j^\text{Sp}Q$, we obtain
\[
\widehat{T_j}^\text{Res}=\begin{pmatrix}
\re Z_j&\im Z_j-h_j\\
-\im Z_j-h_j&\re Z_j
\end{pmatrix},\quad
\widehat{T_j}^\text{Sp}=
\begin{pmatrix}
\cos(s_jk)&\sin(s_jk)\\
-\sin(s_jk)&\cos(s_jk)
\end{pmatrix}.
\]
Let $M_j=\widehat{T_j}^\text{Sp}\widehat{T_j}^\text{Res}$. Then (\ref{equ: T_2=T_1 inverse}) gives $M_2=\sigma M_1^{-1}$.  A direct multiplication gives
\[
M_j=\begin{pmatrix}
\re(Z_j\mathrm e^{\mathrm i s_jk})-h_j\sin(s_jk)
&
\im(Z_j\mathrm e^{\mathrm i s_jk})-h_j\cos(s_jk)
\\
-\im(Z_j\mathrm e^{\mathrm i s_jk})-h_j\cos(s_jk)
&
\re(Z_j\mathrm e^{\mathrm i s_jk})+h_j\sin(s_jk)
\end{pmatrix}.
\]
Since \(\det M_j=1\), comparing the entries of $M_2$ and $\sigma M_1^{-1}$ yields
\[
\begin{aligned}
\re(Z_2\mathrm e^{\mathrm i s_2k})
=\sigma\re(Z_1\mathrm e^{\mathrm i s_1k}),
\im(Z_2\mathrm e^{\mathrm i s_2k})
=-\sigma\im(Z_1\mathrm e^{\mathrm i s_1k}),\\
h_2\sin(s_2k)=-\sigma h_1\sin(s_1k),
h_2\cos(s_2k)=-\sigma h_1\cos(s_1k).
\end{aligned}
\]
These four equalities are equivalent to (\ref{equ: hZ condition}), proving the first assertion.

Now suppose that \(0<\delta<\min\{r_1,r_2\}\). Then $h_j=0$ is equivalent to $r_j\ell_jk\in\pi\Z$. If \(h_1=h_2=0\), write \(r_j\ell_jk=m_{2j-1}\pi,j=1,2\). Since \(Z_j=(-1)^{m_{2j-1}}\), the remaining condition in (\ref{equ: hZ condition}) becomes $\mathrm e^{\mathrm i(s_1+s_2)k}
=\sigma(-1)^{m_1+m_3}$.
This is equivalent to $(s_1+s_2)k=p\pi,
\sigma=(-1)^{m_1+m_3+p}$ for some \(p\in\mathbb Z\).

If \(h_1h_2\ne0\), the first condition in (\ref{equ: hZ condition}) gives $\mathrm e^{\mathrm i(s_2-s_1)k}
=-\sigma h_2/h_1\in\mathbb R$. Its modulus is one, so $(s_2-s_1)k=p\pi$ for some \(p\in\mathbb Z\). Setting \(\eta=(-1)^p\sigma\), the two conditions in (\ref{equ: hZ condition}) reduce to
\[
h_2=-\eta h_1,\qquad
Z_2\mathrm e^{2\mathrm i s_1k}=\eta\overline{Z_1}.
\]
Conversely, these imply the original conditions, so all steps are reversible. Finally, the first condition in (\ref{equ: hZ condition}) forces \(h_1=0\Longleftrightarrow h_2=0\), so exactly one of $h_1,h_2$  vanishing is impossible.
\end{proof}
Assuming that $N=2$, we now use these explicit conditions to give a complete classification of Dirac points near a fixed reference wavenumber \(k_0\in E\) for sufficiently small \(\delta>0\). The classification depends on the resonance set \(I(k_0)\) defined in (\ref{equ: I def}) and the geometric and material parameters. We introduce the completely symmetric configuration, defined by
\[
r_1=r_2=r,\quad \ell_1=\ell_2=\ell,\quad s_1=s_2=s.
\tag{S}
\]
Whenever (S) holds, define
\begin{align}\label{Phi(k,delta) def}
\Phi(k,\delta)=2\delta\cos(sk)\cos(r\ell k)-\left(r+\frac{\delta^2}{r}\right)\sin(sk)\sin(r\ell k).
\end{align}
We are now ready to state the complete local classification of Dirac points for \(N=2\).
\begin{theorem}[Complete local classification of Dirac points for $N=2$]\label{thm: complete classification}
Fix the geometric and material parameters and let \(k_0\in E\). For each $j\in I(k_0)$, let $m_j\in\Z$ be defined by (\ref{equ:defmj}). Then there exist \(\varepsilon,\delta_0>0\), independent of \(\delta\), such that the following statements describe all Dirac points $(k_*,\alpha_*)$ satisfying \(|k_*-k_0|<\varepsilon,\alpha_*\in Y^*\) for every \(0<\delta<\delta_0\).
For \(k_0\ne0\), the classification is as follows; the case \(k_0=0\) is treated separately in part (iv). 
\begin{center}
\begin{tabular}{lcccc}
\toprule
\(I(k_0)\)
& \(\{1,3\}\)
& \(\{2,4\}\)
& \(\{1,2,3,4\}\)
& Other configurations \\
\midrule
Dirac points near \(k_0\)
& See (i)
& See (ii)
& See (iii)
& None \\
\bottomrule
\end{tabular}
\end{center}
\begin{enumerate}
\item[(i)] Suppose that \(I(k_0)=\{1,3\}\). A Dirac point exists near \(k_0\) if and only if
\begin{align}\label{equ: set(1) condition} 
(s_1+s_2)k_0=p\pi\quad\text{for some }p\in\Z
\end{align}
or (S) holds. When it exists, it is unique.
If (\ref{equ: set(1) condition}) holds, its wavenumber $k_*$ is exactly \(k_0\), and its Bloch multiplier is $\e^{\i\alpha_*L}=(-1)^{m_1+m_3+p}$.
If (S) holds, its Bloch multiplier is $\e^{\i\alpha_*L}=-1$ and its wavenumber $k_*$ is the unique root of \(\Phi(k,\delta)=0\) near \(k_0\), with asymptotic form
\[ k_*(\delta) = k_0+\frac{2\cot(sk_0)}{r^2\ell}\delta+\mc{O}(\delta^2). \]
If both conditions hold, this root is exactly \(k_0\).
\item[(ii)] Suppose that \(I(k_0)=\{2,4\}\). A Dirac point exists near \(k_0\) if and only if
\begin{align}\label{equ: set(2) condition}
r_1=r_2=r,\qquad r(\ell_1+\ell_2)k_0=p\pi\quad\text{for some }p\in\Z 
\end{align}
or (S) holds. When it exists, it is unique.
If (\ref{equ: set(2) condition}) holds, its wavenumber $k_*$ is exactly \(k_0\), and its Bloch multiplier is $\e^{\i\alpha_*L}=(-1)^{m_2+m_4+p}$. If (S) holds, its Bloch multiplier is $\e^{\i\alpha_*L}=-1$ and its wavenumber is the unique root of \(\Phi(k,\delta)=0\) near \(k_0\), with asymptotic form
\[ k_*(\delta) = k_0+\frac{2\cot(r\ell k_0)}{rs}\delta+\mc{O}(\delta^2)\]
If both conditions hold, this root is exactly \(k_0\).
\item[(iii)] Suppose that \(k_0\ne0\) and \(I(k_0)=\{1,2,3,4\}\). Set $\sigma=(-1)^{m_1+m_2+m_3+m_4}$. There is always a Dirac point at $k_*=k_0,\e^{\i\alpha_* L}=\sigma.$
If (S) fails, this is the only nearby Dirac point. If (S) holds, then \(\sigma=1\), and there are exactly three nearby Dirac points, given by
\[ k_*=k_0,\e^{\i\alpha_*L}=1,\quad k_*=k_0-\kappa(\delta),\e^{\i\alpha_*L}=-1,\quad k_*=k_0+\kappa(\delta),\e^{\i\alpha_*L}=-1, \]
where \(\kappa(\delta)\) is the unique sufficiently small positive root of \(\Phi(k,\delta)=0\), satisfying
\begin{align}\label{equ: kappa(delta) expand}
\kappa(\delta) = \sqrt{\frac{2\delta}{r^2\ell s}} +\mc{O}(\delta^{3/2}).
\end{align}
\item[(iv)] Suppose that \(k_0=0\). Dirac points near zero exist if and only if (S) holds. In that case, there are exactly two, given by $k_*=\pm\kappa(\delta),\e^{\i\alpha_* L}=-1$, where \(\kappa(\delta)\) is the unique sufficiently small positive root of \(\Phi(k,\delta)=0\) and satisfies (\ref{equ: kappa(delta) expand}). The point $k_0=0$ itself is not a Dirac wavenumber.
\end{enumerate}
\end{theorem}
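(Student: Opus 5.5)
The plan is to work entirely from Corollary~\ref{thm: explict conditions N=2}, which turns the Dirac condition of Theorem~\ref{thm: Dirac point equiv} into the two exhaustive alternatives (\ref{equ: alternative1}) and (\ref{equ: alternative2}). We fix $k_0\in E$ and choose $\varepsilon$ so small that the interval $(k_0-\varepsilon,k_0+\varepsilon)$ contains no other point of the discrete sets $\pi\Z/(r_j\ell_j)$, $\pi\Z/(s_1+s_2)$ and $\pi\Z/|s_2-s_1|$ (the last only when $s_1\ne s_2$), except possibly $k_0$ itself. We then choose $\delta_0<\min\{r_1,r_2\}$.

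\textbf{Alternative 1 ($h_1=h_2=0$).} This requires $r_1\ell_1k,r_2\ell_2k\in\pi\Z$ and $(s_1+s_2)k\in\pi\Z$. Near $k_0$ this forces $k=k_0$, with $1,3\in I(k_0)$ and $(s_1+s_2)k_0\in\pi\Z$. When $I=\{1,2,3,4\}$, the last condition is automatic. This yields the fixed points in (i) and in (iii), with the multipliers stated there.

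\textbf{Alternative 2 ($h_1h_2\ne0$).} Now $(s_2-s_1)k=p\pi$.
\begin{itemize}
\item If $s_1\ne s_2$, this pins $k=k_0$ with $(s_2-s_1)k_0\in\pi\Z$. The relation $h_2=-\eta h_1$ at $k_0$ must then hold exactly for small $\delta$. Multiplying by $\delta$ and letting $\delta\to0$ gives $r_2\sin(r_2\ell_2k_0)=-\eta r_1\sin(r_1\ell_1k_0)$. The $Z$-relation, multiplied by $\delta$, gives at leading order $\tfrac{\i}{2}r_2\sin(r_2\ell_2k_0)e^{2\i s_1k_0}=-\eta\tfrac{\i}{2}r_1\sin(r_1\ell_1k_0)$. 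Together these force $e^{2\i s_1k_0}=1$, so $2s_1k_0\in\pi\Z$.
\item The next-order terms then must match. Comparing the $\delta^{-1}$ and $\delta$ coefficients of $h_j$ separately (they carry $r_j$ and $1/r_j$) forces $r_1\sin(r_1\ell_1k_0)$ and $r_1^{-1}\sin(r_1\ell_1k_0)$ to equal their counterparts with index $2$. This forces $r_1=r_2$. The real part of $Z$ then gives the cosine relation, so $r(\ell_1\pm\ell_2)k_0\in\pi\Z$ appropriately. Since $h_j\ne0$, the indices $1,3\notin I$ while $2,4\in I$, which gives (ii) with $k_*=k_0$.
\item Cases such as $I=\{2\}$ alone are excluded because $2s_1k_0\in\pi\Z$ together with $(s_2-s_1)k_0\in\pi\Z$ puts both spacings in $I$.
\item If $s_1=s_2$, the condition $p=0$ holds for every $k$, and we get a one-parameter equation. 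Splitting $h_2=-\eta h_1$ into its $r$ and $1/r$ parts, valid for all nearby $k$ by analyticity of the identity in $\delta$, forces $r_1=r_2$ and $\sin(r\ell_2k)=-\eta\sin(r\ell_1k)$. The $Z$-equation then forces $\ell_1=\ell_2$ unless $k$ is isolated, and $\eta=-1$. Hence (S) holds, and the remaining $Z$-equation $Z\e^{2\i sk}=-\overline Z$ reduces, after taking real and imaginary parts, to $\Phi(k,\delta)=0$ with $\sigma=-1$.
\end{itemize}

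\textbf{Solving $\Phi=0$ under (S).} The main work is to locate the zeros of $\Phi(k,\delta)=0$ near $k_0$. At $\delta=0$, $\Phi=-r\sin(sk)\sin(r\ell k)$, so its zeros near $k_0$ have the multiplicity of $k_0$ in the spacing/resonator resonant sets.
\begin{itemize}
\item For $I=\{1,3\}$ or $\{2,4\}$ the zero is simple, and the implicit function theorem gives a unique root. Its expansion follows from $\partial_\delta\Phi=2\cos(sk_0)\cos(r\ell k_0)$ divided by $\partial_k\Phi$, which reproduces the stated $\cot$ formulas.
\item For $I=\{1,2,3,4\}$ or $k_0=0$ the zero is double. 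Since $\Phi$ is odd in $z=k-k_0$ after the sign normalization $\sin(s(k_0+z))\sin(r\ell(k_0+z))=\pm\sin(sz)\sin(r\ell z)$, we use the same square-root rescaling as in Theorem~\ref{thm: k(delta) expand}. The equation becomes $2\delta(1+O(z^2))=rz^2\,r\ell s(1+O(z^2))+O(\delta^2)$ with an appropriate sign. This sign must be positive for roots to exist; it is positive exactly when $\sigma=1$, i.e.\ $m$'s parity is forced. Rescaling then gives $\pm\kappa(\delta)$ with $\kappa=\sqrt{2\delta/(r^2\ell s)}+O(\delta^{3/2})$, the error being odd-order because $\Phi$ is even in $z$.
\item The case $k_0=0$ gives (iv). There, $k_0=0$ is excluded by Theorem~\ref{thm: Dirac point equiv}, since $A(0;\delta)\ne\pm I$.
\end{itemize}

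\textbf{Remaining checks.} We must verify that under (S) in case (iii) the fixed point has $\sigma=1$. This holds because $m_1=m_3$ and $m_2=m_4$, so the exponent sum is even. We must also verify that other resonant sets admit neither alternative, which follows from the preceding case analysis.

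I expect the main obstacle to be the rigorous reduction in the $s_1=s_2$ (or $s_1\ne s_2$) branch of alternative~2 to (S) or to (\ref{equ: set(2) condition}) uniformly for all small $\delta$. The issue is that the identities must hold at a $\delta$-dependent $k_*$, not identically in $\delta$. My approach is to pass to the limit $\delta\to0$ along a sequence of Dirac points, with $\varepsilon$ fixed, to derive the necessary limiting constraints. Those constraints would then be bootstrapped by comparing the $\delta^{-1}$ and $\delta$ coefficients.
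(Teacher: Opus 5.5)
\textbf{Verdict: genuine gap.} Your treatment of Dirac points located exactly at $k_0$ is essentially sound. At fixed $k_0$, the relations of Corollary~\ref{thm: explict conditions N=2}, multiplied by $\delta$, are polynomial in $\delta$. So after shrinking $\delta_0$ below finitely many exceptional contrasts, comparing coefficients is legitimate. (The paper gets the same information exactly at each $\delta$ by comparing both off-diagonal entries of $T_2^{\mathrm{Res}}=\tau(T_1^{\mathrm{Res}})^{-1}$.) There are two slips here. First, $\e^{2\i s_1k_0}=1$ gives $s_1k_0\in\pi\Z$; the weaker $2s_1k_0\in\pi\Z$ you wrote would not put both spacings in $I$. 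Second, the resulting condition is $r(\ell_1+\ell_2)k_0\in\pi\Z$, not $\ell_1\pm\ell_2$. Your limiting arguments ($|h_1|=|h_2|$ and the leading-order $Z$-comparison), applied along sequences $k_n\to k_0$, $\delta_n\to0$, do reduce the table to the listed configurations.

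The genuine gap is the $s_1=s_2$ branch, i.e.\ the contrast-dependent Dirac points. This is where every ``only if (S)'' statement in (i), (ii), (iv) is decided, as is the uniqueness of the central point in (iii) when (S) fails.

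\emph{Why the splitting step fails.} You claim that $h_2=-\eta h_1$ can be split into its $r$ and $1/r$ parts ``by analyticity of the identity in $\delta$''. This is not valid. After multiplication by $\delta$ the relation reads $F_0(k)=\delta^2F_2(k)$, with $F_0=r_2\sin(r_2\ell_2k)+\eta r_1\sin(r_1\ell_1k)$ and $F_2=r_2^{-1}\sin(r_2\ell_2k)+\eta r_1^{-1}\sin(r_1\ell_1k)$. It holds only at $k=k_*(\delta)$, not identically in $\delta$ at fixed $k$. For instance, take $I(k_0)=\{2,4\}$, $r_1\neq r_2$ and $F_0(k_0)=0\neq F_0'(k_0)$; such parameters exist. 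Then this single equation has a genuine solution curve $k(\delta)=k_0+\O(\delta^2)$ by the implicit function theorem. So no argument based on the $h$-relation alone can force $r_1=r_2$. Your fallback (pass to a limit, then compare $\delta^{-1}$ and $\delta$ coefficients) fails for the same reason: $k_n-k_0$ produces terms of orders $\delta_n$ and $\delta_n^2$ that mix with $\delta_n^2F_2$.

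\emph{What the paper does instead for (ii).} It uses the $Z$-relation quantitatively.
\begin{enumerate}
\item It derives $\tan(sk)=\delta(c_2-\sigma c_1)/(p_2-\sigma\delta^2v_1)$, hence $k_n-k_0=\mu\delta_n+\O(\delta_n^2)$ with $\mu\neq0$ when $r_1\neq r_2$.
\item It Taylor-expands $F_0(k_n)=\delta_n^2F_2(k_n)$ to obtain $F_0'(k_0)=0$ and $\tfrac{\mu^2}{2}F_0''(k_0)=F_2(k_0)$.
\item It reaches a contradiction because $F_0''(k_0)F_2(k_0)<0$.
\end{enumerate}

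\emph{Cases (i), (iii) and (iv).} Here the limit of the $h$-relation only gives $r_1^2\ell_1=r_2^2\ell_2$, not $r_1=r_2$. One needs the paper's division by $x=k-k_0$, giving the sinc equation. Strict monotonicity of $\ell\mapsto(M-\delta^2\ell)\operatorname{sinc}(\sqrt{M\ell}\,|x|)$ then yields $\ell_1=\ell_2$ and $r_1=r_2$.

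\emph{What survives.} Once $r_1=r_2$ is secured, your remaining steps go through: the $\delta$-independent relation $\sin(r\ell_2k)=-\eta\sin(r\ell_1k)$ with isolated zeros, the reduction of the $Z$-relation to $\Phi=0$, and the root counting. For (iii), the paper's conjugation $A(k_0+x;\delta)=\sigma D_xA(x;\delta)D_x^{-1}$ reduces everything to (iv), sparing a separate analysis. Also tidy the double-zero bullet: $\Phi(k_0+z,\delta)=(-1)^{p+q}\Phi(z,\delta)$ with $\Phi(\cdot,\delta)$ even, and no sign condition on $\sigma$ governs existence of the roots.
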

\begin{proof}
Since $A(-k;\delta)=A(k;\delta),I(-k_0)=I(k_0)$, the characterization in Theorem \(\ref{thm: Dirac point equiv}\) implies that \((k_*,\alpha_*)\) is a Dirac point if and only if \((-k_*,\alpha_*)\) is a Dirac point. Therefore, it suffices to consider \(k_0\ge0\). We take $0<\delta<\min\{r_1,r_2\}$ throughout the proof.

By Corollary \ref{thm: explict conditions N=2}, every Dirac point satisfies one of the two alternatives corresponding to \(h_1=h_2=0\) and \(h_1h_2\ne0\). In either case, \(|h_1|=|h_2|\). To establish local nonexistence, we need to show that there exist \(\varepsilon,\delta_0>0\) such that no Dirac point \((k,\alpha)\) satisfies \(|k-k_0|<\varepsilon\) for any \(0<\delta<\delta_0\). If this were false, taking \(\varepsilon=\delta_0=1/n\), we could choose Dirac points \((k_n,\alpha_n)\) at contrasts \(\delta_n\) that satisfy $|k_n-k_0|<1/n$ and $0<\delta_n<1/n$. Thus, \(k_n\to k_0\) and \(\delta_n\to0\). It therefore suffices to exclude the existence of such sequences.

For $k_0>0$, we first exclude the configurations other than those listed in (i)--(iii). If exactly one of \(1,3\) belongs to \(I(k_0)\), then for $j=1,2$, exactly one of $2\delta h_j
=\left(r_j-\delta^2/r_j\right)\sin(r_j\ell_jk)$ has a nonzero limit as \((k,\delta)\to(k_0,0)\). This contradicts \(|h_1|=|h_2|\). Suppose next that neither \(1\) nor \(3\) belongs to \(I(k_0)\). Then $\sin(r_1\ell_1k_0)\ne0,\sin(r_2\ell_2k_0)\ne0$.
By continuity, both sine factors remain nonzero in a sufficiently small neighborhood of \(k_0\). Since \(0<\delta<\min\{r_1,r_2\}\), we have \(h_1h_2\ne0\) throughout this neighborhood. Thus, only the latter alternative in Corollary \ref{thm: explict conditions N=2} needs to be considered. For the calculations below, write
\[
p_j(k)=r_j\sin(r_j\ell_jk),\quad v_j(k)=\frac{1}{r_j}\sin(r_j\ell_jk),\quad c_j(k)=\cos(r_j\ell_jk)\qquad j=1,2.
\]
Then
\[
h_j=\frac{p_j-\delta^2v_j}{2\delta},
\qquad
Z_j=c_j+\frac{\i}{2\delta}(p_j+\delta^2v_j).
\]
The equation for $h_j$ in (\ref{equ: alternative2}) gives 
\begin{align}\label{equ: p2p1=v2v1}
p_2+\eta p_1=\delta^2(v_2+\eta v_1).
\end{align}
Multiplying the equation for \(Z_j\) in (\ref{equ: alternative2}) by \(\e^{-\i s_1k}\), taking real parts, and using (\ref{equ: p2p1=v2v1}), we obtain
\begin{align}\label{equ:  alternative 2 necessary condition}
\delta(c_2-\eta c_1)\cos(s_1k)
=(p_2-\eta\delta^2v_1)\sin(s_1k).
\end{align}
Since \(p_2(k_0)\ne0\), the equation above implies $\sin(s_1k)=\mc{O}(\delta)$. Moreover, \((s_2-s_1)k\in\pi\mathbb Z\) gives $|\sin(s_2k)|=|\sin(s_1k)|$. Passing to the limit shows that \(2,4\in I(k_0)\), leaving only \(I(k_0)=\{2,4\}\).

Finally, suppose that \(1,3\in I(k_0)\), but exactly one of \(2,4\) belongs to \(I(k_0)\). In a sufficiently small neighborhood, the first two conditions in (\ref{equ: alternative1}) force \(k=k_0\), whereas $(s_1+s_2)k_0\notin\pi\mathbb Z$.
Thus, the former alternative is impossible. Also, $(s_2-s_1)k_0\notin\pi\mathbb Z$, which excludes (\ref{equ: alternative2}) throughout a sufficiently small neighborhood. This proves the nonexistence assertions in the table.

\medskip
\noindent\textbf{Proof of (i).} Suppose that \(I(k_0)=\{1,3\}\). Then (\ref{equ: alternative1}) is possible locally only at \(k=k_0\) and occurs precisely when $(s_1+s_2)k_0=p\pi$ for some \(p\in\mathbb Z\). In this case, a direct computation yields $A(k_0;\delta)=(-1)^{m_1+m_3+p}I$.
By Theorem \(\ref{thm: Dirac point equiv}\), \((k_0,\alpha_*)\) is therefore a Dirac point with Bloch multiplier $\e^{\i\alpha_*L}=(-1)^{m_1+m_3+p}$.

It remains to consider \(k\ne k_0\). After shrinking the neighborhood, both resonator sine factors are nonzero, so (\ref{equ: alternative2}) applies. If \(s_1\ne s_2\), its first phase condition restricts \(k\) to the discrete set $\left\{q\pi/(s_2-s_1):q\in\mathbb Z\right\}$.
A sufficiently small neighborhood of \(k_0\) contains no element of this set other than possibly \(k_0\). Thus, a shifted Dirac point requires \(s_1=s_2=s\). In this case \(p=0\) and \(\eta=\sigma:=\e^{\i\alpha L}\). Put \(x=k-k_0\). Since \(r_j\ell_jk_0=m_{2j-1}\pi\), division of \(h_2=-\sigma h_1\) by \(x\ne0\) gives
\[
\begin{aligned}
(-1)^{m_3}(r_2^2\ell_2-\delta^2\ell_2)
\operatorname{sinc}(r_2\ell_2x)
=-\sigma(-1)^{m_1}(r_1^2\ell_1-\delta^2\ell_1)
\operatorname{sinc}(r_1\ell_1x),
\end{aligned}
\]
where \(\operatorname{sinc}x=\sin x/x\), with value \(1\) at zero. All factors apart from the displayed signs are positive for sufficiently small \(|x|,\delta\). Hence, $\sigma=-(-1)^{m_1+m_3}$ and
\begin{align}\label{equ: sinc equ}
(r_2^2\ell_2-\delta^2\ell_2)\operatorname{sinc}(r_2\ell_2x)
=
(r_1^2\ell_1-\delta^2\ell_1)\operatorname{sinc}(r_1\ell_1x).
\end{align}
If \(r_1^2\ell_1\ne r_2^2\ell_2\), continuity excludes (\ref{equ: sinc equ}) near \((x,\delta)=(0,0)\). Otherwise, write \(r_1^2\ell_1=r_2^2\ell_2=M\), so that \(r_j\ell_j=\sqrt{M\ell_j}\), and define
\[
f(\ell)=(M-\delta^2\ell)\operatorname{sinc}\left(\sqrt{M\ell}|x|\right).
\]
For sufficiently small \(\delta>0\) and \(|x|>0\), both factors are positive and strictly decreasing in \(\ell\) on the interval between \(\ell_1\) and \(\ell_2\). Thus, equation (\ref{equ: sinc equ}), namely \(f(\ell_1)=f(\ell_2)\), therefore implies \(\ell_1=\ell_2\), and hence \(r_1=r_2\). This shows that (S) is necessary for a shifted Dirac point. Under (S), recall (\ref{equ: T_j Res Sp def}), we have $T_1^{\text{Res}}=T_2^{\text{Res}}:=T^{\text{Res}},T_1^{\text{Sp}}=T_2^{\text{Sp}}:=T^{\text{Sp}}$. Write \(T=T^{\mathrm{Sp}}T^{\mathrm{Res}}\). Then
\[
A=T^2,\qquad \det T=1,\qquad
T^2-(\operatorname{tr}T)T+I=0.
\]
By Theorem \(\ref{thm: Dirac point equiv}\), a Dirac point occurs precisely when \(A=\pm I\), with the corresponding Bloch multiplier \(\pm1\). We consider these two cases separately.
If \(A=-I\), the above identity gives $(\operatorname{tr}T)T=0$. Since \(T\) is invertible, \(\operatorname{tr}T=0\). Conversely, \(\operatorname{tr}T=0\) implies \(T^2=-I\). Therefore,
\[
A=-I
\quad\Longleftrightarrow\quad
\operatorname{tr}T=0
\quad\Longleftrightarrow\quad
\Phi(k,\delta)=0,
\]
where the last equivalence follows from
\[
\operatorname{tr}T
=
2\cos(sk)\cos(r\ell k)
-\left(\frac r\delta+\frac\delta r\right)
\sin(sk)\sin(r\ell k)
=\frac{\Phi(k,\delta)}{\delta}.
\]
If \(A=I\), then $(\operatorname{tr}T)T=2I$, so \(T\) is a scalar matrix. Since \(\det T=1\), this implies \(T=\pm I\). Conversely, \(T=\pm I\) clearly implies \(A=I\). Writing \(T=\tau I\), where \(\tau\in\{1,-1\}\), we have $T^{\mathrm{Res}}=\tau(T^{\mathrm{Sp}})^{-1}$. Comparison of the two off-diagonal entries yields
\[
\frac{\delta}{r}\sin(r\ell k)=-\tau\sin(sk),
\qquad
\frac{r}{\delta}\sin(r\ell k)=-\tau\sin(sk).
\]
Since \(0<\delta<r\), we obtain $\sin(r\ell k)=0,\sin(sk)=0$.
Conversely, these equalities imply \(A=I\). However, \(I(k_0)=\{1,3\}\) gives \(\sin(sk_0)\ne0\), so this case is excluded near \(k_0\). Consequently, under (S), the nearby Dirac points are precisely the roots of \(\Phi(k,\delta)=0\), with Bloch multiplier \(\e^{\i\alpha_*L}=-1\). At $(k,\delta)=(k_0,0)$,
\[
\Phi(k_0,0)=0,\quad\partial_k\Phi(k_0,0)=-r^2\ell(-1)^{m_1}\sin(sk_0)\ne0.
\]
By continuity, \(\partial_k\Phi\) has a fixed nonzero sign on a sufficiently small neighborhood. At two fixed endpoints on opposite sides of \(k_0\), the values of \(\Phi(k,0)\) have opposite signs, and these signs persist for sufficiently small \(\delta>0\). Thus, there is exactly one nearby root \(k_*(\delta)\). The derivative bound and \(\Phi(k_0,\delta)=\mc{O}(\delta)\) first give \(k_*(\delta)-k_0=\mc{O}(\delta)\). Taylor expansion then yields
\[
k_*(\delta)-k_0
=-\frac{\partial_\delta\Phi(k_0,0)}{\partial_k\Phi(k_0,0)}\delta
+\mc{O}(\delta^2)
=\frac{2\cot(sk_0)}{r^2\ell}\delta+\mc{O}(\delta^2).
\]
Finally, if (S) and \((s_1+s_2)k_0\in\pi\mathbb Z\) both hold, then $2sk_0\in\pi\mathbb Z,sk_0\notin\pi\mathbb Z$, so \(\cos(sk_0)=0\). Together with \(\sin(r\ell k_0)=0\), this gives \(\Phi(k_0,\delta)=0\) for every \(\delta>0\). The unique root is therefore exactly \(k_0\), which completes the proof of (i).

\medskip
\noindent\textbf{Proof of (ii).} Suppose that \(I(k_0)=\{2,4\}\). Since $\sin(r_1\ell_1k_0)\ne0,\sin(r_2\ell_2k_0)\ne0$,
both sine factors remain nonzero near \(k_0\). Thus, \(h_1h_2\ne0\), and only the second alternative applies. We first consider \(k=k_0\). Since $T_j^{\mathrm{Sp}}(k_0)=(-1)^{m_{2j}}I$, the condition $A(k_0;\delta)=\sigma I$ is equivalent to $T_2^{\mathrm{Res}}(k_0)
=\tau\bigl(T_1^{\mathrm{Res}}(k_0)\bigr)^{-1}$ with $\tau=\sigma(-1)^{m_2+m_4}$. Comparison of the off-diagonal entries gives
\[
\frac{\sin(r_2\ell_2k_0)}{r_2}
=-\tau\frac{\sin(r_1\ell_1k_0)}{r_1},
\qquad
r_2\sin(r_2\ell_2k_0)
=-\tau r_1\sin(r_1\ell_1k_0).
\]
Since both sine factors are nonzero, these imply \(r_1=r_2=r\). The diagonal and off-diagonal conditions then reduce to $\cos(r\ell_2k_0)=\tau\cos(r\ell_1k_0)$ and $
\sin(r\ell_2k_0)=-\tau\sin(r\ell_1k_0)$.
Equivalently, \(\e^{\i r(\ell_1+\ell_2)k_0}=\tau\), so \(r(\ell_1+\ell_2)k_0=p\pi\) for some \(p\in\mathbb Z\). In this case \(A(k_0;\delta)=(-1)^{m_2+m_4+p}I\), and Theorem \(\ref{thm: Dirac point equiv}\) gives the stated Dirac point.

We next consider \(k\ne k_0\). The same discrete-set argument as in (i) shows that a shifted Dirac point requires \(s_1=s_2=s\). Then \(p=0\) and \(\eta=\sigma\). (\ref{equ:  alternative 2 necessary condition}) now becomes
\begin{align}\label{equ: alternative2 necessary condition2}
\tan(sk)=\delta\frac{c_2-\sigma c_1}{p_2-\sigma\delta^2v_1}.
\end{align}
The division is valid when $(k,\delta)$ is near $(k_0,0)$, since $\cos(sk_0)=\pm1,p_2(k_0)\ne0$. We show first that \(r_1=r_2\) is necessary for a shifted Dirac point. Suppose that \(r_1\ne r_2\) and there is a sequence of Dirac points with $k_n\to k_0, \delta_n\to0_+$. Passing to a subsequence fixes $\sigma$. Taking the limit in (\ref{equ: p2p1=v2v1}) gives
\begin{align}\label{equ: Q def}
Q:=r_2\sin(r_2\ell_2k_0)
=-\sigma r_1\sin(r_1\ell_1k_0)\ne0.
\end{align}
Equation (\ref{equ: alternative2 necessary condition2}) first gives $k_n-k_0=\mc{O}(\delta_n)$, and then
\begin{align}\label{equ: kn asymptotic}
k_n-k_0=\mu\delta_n+\mc{O}(\delta_n^2),
\qquad
\mu=
\frac{\cos(r_2\ell_2k_0)-\sigma\cos(r_1\ell_1k_0)}{sQ}.
\end{align}
If \(\mu=0\), equality of the squared cosines implies equality of the squared sines. Together with (\ref{equ: Q def}), this would give \(r_1=r_2\), a contradiction. Thus, \(\mu\ne0\). Set $F_0(k)=p_2(k)+\sigma p_1(k)$ and $F_2(k)=v_2(k)+\sigma v_1(k)$. Equation (\ref{equ: p2p1=v2v1}) reads $F_0(k_n)=\delta_n^2F_2(k_n)$. Since $F_0(k_0)=0$, Taylor expansion in (\ref{equ: kn asymptotic}), successively at order \(\delta_n\) and then at order \(\delta_n^2\), gives
\begin{align}\label{equ: F0 F2 compare}
F_0'(k_0)=0,\qquad
\frac{\mu^2}{2}F_0''(k_0)=F_2(k_0).
\end{align}
The first identity in (\ref{equ: F0 F2 compare}) is $r_2^2\ell_2\cos(r_2\ell_2k_0)=-\sigma r_1^2\ell_1\cos(r_1\ell_1k_0)$. Squaring and using (\ref{equ: Q def}) yields
\begin{align}\label{r_i  Q relation}
(r_1^2\ell_1^2-r_2^2\ell_2^2)(r_1^2-Q^2)=r_2^2\ell_2^2(r_2^2-r_1^2).
\end{align}
Here \(r_1^2-Q^2>0\). Indeed, equality would force \(\cos(r_1\ell_1k_0)=0\), and then \(F_0'(k_0)=0\) would give \(\cos(r_2\ell_2k_0)=0\), then equation (\ref{equ: Q def}) would again imply \(r_1=r_2\). Hence, (\ref{r_i  Q relation}) gives
\[
\operatorname{sign}(r_1^2\ell_1^2-r_2^2\ell_2^2)=\operatorname{sign}(r_2^2-r_1^2).
\]
However, 
\[
F''_0(k_0)=Q(r_1^2\ell_1^2-r_2^2\ell_2^2),\qquad F_2(k_0)=Q\left(\frac{1}{r_2^2}-\frac{1}{r_1^2}\right),
\]
so $F''_0(k_0)F_2(k_0)<0$, contradicting the second equality in (\ref{equ: F0 F2 compare}). This proves local nonexistence when \(r_1\ne r_2\). We may now assume \(r_1=r_2=r\). The equation \(h_2=-\sigma h_1\) reduces to
\begin{align}\label{equ: sin ell_1+sin ell_2}
\sin(r\ell_2k)+\sigma\sin(r\ell_1k)=0.
\end{align}
Denote the left-hand side by \(f_\sigma(k)\). If \(\ell_1\ne\ell_2\), then \(f_\sigma\) is analytic and not identically zero, since $f_\sigma'(0)=r(\ell_2+\sigma\ell_1)\ne0$. Its zeros are therefore isolated. Since \(f_\sigma\) is independent of \(\delta\), we may choose a sufficiently small neighborhood of \(k_0\), independent of \(\delta\) and common to \(\sigma=\pm1\), in which the only possible zero is \(k_0\). Hence, a shifted Dirac point requires \(\ell_1=\ell_2\), and thus (S) holds. Under (S), equation (\ref{equ: sin ell_1+sin ell_2}) and the nonvanishing of \(\sin(r\ell k)\) near \(k_0\) force \(\sigma=-1\). As established in the proof of (i), the nearby Dirac wavenumbers are exactly the roots of \(\Phi(k,\delta)=0\), with Bloch multiplier \(-1\).
At \((k,\delta)=(k_0,0)\),
\[
\Phi(k_0,0)=0,\qquad
\partial_k\Phi(k_0,0)
=-rs\cos(sk_0)\sin(r\ell k_0)\ne0.
\]
The same argument as in (i) gives a unique nearby root satisfying
\[
k_*(\delta)-k_0
=-\frac{\partial_\delta\Phi(k_0,0)}{\partial_k\Phi(k_0,0)}\delta
+\mc{O}(\delta^2)
=\frac{2\cot(r\ell k_0)}{rs}\delta+\mc{O}(\delta^2).
\]
Finally, if (S) and the condition in \((\ref{equ: set(2) condition})\) both hold, then $2r\ell k_0\in\pi\mathbb Z,r\ell k_0\notin\pi\mathbb Z$.
Thus \(\cos(r\ell k_0)=0\). Together with \(\sin(sk_0)=0\), this gives \(\Phi(k_0,\delta)=0\) for every \(\delta>0\). The unique root is therefore exactly \(k_0\), completing the proof of (ii).

\medskip
\noindent\textbf{Proof of (iv).} By Example \ref{ex: k=0} and Theorem \ref{thm: Dirac point equiv}, $k_0=0$ itself is not a Dirac wavenumber. We only need to classify the positive Dirac wavenumber near zero. For sufficiently small $k>0$, $r_j\ell_jk,s_jk\in(0,\pi/2)$ thus $h_j>0$ and only the second alternative applies. In particular $(s_2-s_1)k=p\pi$ for some $p\in\Z$. Taking $k$ sufficiently small forces \(p=0\), hence $s_1=s_2=s$. The remaining conditions then give $h_2=-\sigma h_1,\sigma=\e^{\i\alpha L}\in\{1,-1\}$. Positivity of $h_1,h_2$ forces $\sigma=-1,h_1=h_2$. Dividing  $h_1=h_2$ by $k>0$ gives (\ref{equ: sinc equ}) with $x$ replaced by $k$. The same continuity and monotonicity argument as in (i) yields \(r_1=r_2\) and \(\ell_1=\ell_2\). Thus, (S) is necessary. 

Under (S), the proof of (i) gives $A=-I$ if and only if $\Phi(k,\delta)=0$,  whereas $A=I$ requires $\sin(r\ell k)=\sin(sk)=0$, which is impossible for sufficiently small $k>0$. Therefore, the positive Dirac points near zero are precisely the positive roots of $\Phi(k,\delta)=0$, with Bloch multiplier $-1$. Since $\Phi$ is analytic and  even in $k$, write $\Phi(k,\delta)=\Psi(k^2,\delta)$. Expanding gives
\[
\Psi(y,\delta)=2\delta-r^2\ell sy+\mc{O}(\delta y+y^2)
\]
In particular, $\Psi(0,\delta)=2\delta,\partial_y\Psi(0,0)=-r^2\ell s<0$. Choose $\varepsilon>0$ sufficiently small. By continuity, for all sufficiently small $\delta>0$,
\[
\partial_y\Psi(y,\delta)\leq-\frac{1}{2}r^2\ell s,\quad 0\leq y\leq\varepsilon^2
\quad\Longrightarrow\quad
\Psi(\varepsilon^2,\delta)
\le 2\delta-\frac{r^2\ell s}{2}\varepsilon^2<0.
\]
Together with \(\Psi(0,\delta)>0\), this proves that \(\Phi(k,\delta)=0\) has exactly one root \(\kappa(\delta)\in(0,\varepsilon)\).
The same derivative bound gives $\kappa(\delta)^2\le4\delta/(r^2\ell s)$. Substitution into \(\Phi(\kappa(\delta),\delta)=0\) therefore yields
\[
0=2\delta-r^2\ell s\,\kappa(\delta)^2+\mc{O}(\delta^2),
\qquad
\kappa(\delta)=\sqrt{\frac{2\delta}{r^2\ell s}}+\mc{O}(\delta^{3/2}).
\]
By evenness of \(A\), the only negative Dirac point near zero is \(-\kappa(\delta)\), with the same Bloch multiplier \(-1\). This proves (iv).

\medskip
\noindent\textbf{Proof of (iii).} Suppose that \(k_0\ne0\) and \(I(k_0)=\{1,2,3,4\}\). Then $A(k_0;\delta)=\sigma I$ with $\sigma=(-1)^{m_1+m_2+m_3+m_4}$. Theorem \(\ref{thm: Dirac point equiv}\) gives the central Dirac point with \(\e^{\i\alpha_*L}=\sigma\). To classify the remaining nearby points, write \(k=k_0+x\), where $0<|x|<|k_0|/2$ and define $D_x=\diag\{1,1+k_0/x\}$. We use $T_j^{\text{Res}}(k)$ and $T_j^{\text{Sp}}(k)$ to denote the $T_j^{\text{Res}}$ and $T_j^{\text{Sp}}$ in (\ref{equ: T_j Res Sp def}) to emphasize their dependence on $k$. Direct substitution gives
\[
T_j^{\mathrm{Res}}(k_0+x)
=
(-1)^{m_{2j-1}}
D_xT_j^{\mathrm{Res}}(x)D_x^{-1},\quad
T_j^{\mathrm{Sp}}(k_0+x)
=
(-1)^{m_{2j}}
D_xT_j^{\mathrm{Sp}}(x)D_x^{-1}.
\]
Multiplying these identities yields $A(k_0+x;\delta)=\sigma D_xA(x;\delta)D_x^{-1}$.
Since \(D_x\) is invertible, for either \(\tau\in\{1,-1\}\), $A(k_0+x;\delta)=\tau I$ if and only if $A(x;\delta)=\sigma\tau I$. If (S) fails, part (iv) excludes all sufficiently small nonzero Dirac wavenumbers \(x\). Thus, the central point is the only Dirac point. If (S) holds, then $m_1=m_3,m_2=m_4$ so $\sigma=1$. Part (iv) gives exactly two nonzero Dirac wavenumbers near zero, \(x=\pm\kappa(\delta)\), both satisfying \(A(x;\delta)=-I\). Therefore, the two additional Dirac wavenumbers are \(k_0\pm\kappa(\delta)\), both with Bloch multiplier \(-1\). Together with the central point, whose Bloch multiplier is \(1\), these are precisely the three stated points. The expansion of \(\kappa(\delta)\) follows from part (iv). This proves (iii).
\end{proof}
\begin{remark}
The existence part of Theorem \ref{thm: complete classification} can be summarized as follows. For a fixed $k_0\in E\setminus\{0\}$, a Dirac point exists near $k_0$ for every sufficiently small $\delta>0$ if and only if at least one of the following conditions holds:
\begin{itemize}
    \item $r_1\ell_1k_0,r_2\ell_2k_0,(s_1+s_2)k_0\in\pi \Z$;
    \item $r_1=r_2=r,\quad s_1k_0,s_2k_0,r(\ell_1+\ell_2)k_0\in\pi\Z$; 
    \item (S) holds, and $r\ell k_0\in\pi\Z$ or $sk_0\in\pi\Z$.
\end{itemize}
The first two conditions characterize the reference wavenumber $k_0$ itself as being a Dirac wavenumber for all sufficiently small $\delta$. The last condition additionally accounts for the shifted Dirac points.
\end{remark}
For \(N\ge3\), the condition \(A(k;\delta)=\pm I\) involves a product of more propagation matrices, allowing configurations that are absent when \(N=2\). Consequently, the same reduction does not yield explicit conditions analogous
to those in Corollary \ref{thm: explict conditions N=2}, and the classification in Theorem~\ref{thm: complete classification} does not extend
directly to general arrays. For \(N=2\), symmetry (S) is necessary for Dirac wavenumbers to shift away from the reference wavenumber as \(\delta\to0\), and it reduces the propagation matrix to \(A=T^2\), where \(T=T^{\mathrm{Sp}}T^{\mathrm{Res}}\). More generally, for arbitrary $N$, if \(r_j=r\), \(\ell_j=\ell\), and \(s_j=s\) for all \(j\), then \(A=T^N\). This reduction allows us to characterize all Dirac points for arbitrary \(N\).
\begin{theorem}
\label{thm: complete classification symmetric general N}
Let $N\geq 2$ and assume that \(r_j=r\), \(\ell_j=\ell\), and
\(s_j=s\) for \(1\leq j\leq N\). Fix the geometric and material parameters, let \(k_0\in E\), \(\Phi(k,\delta)\) be defined in (\ref{Phi(k,delta) def}) and consider the equations
\begin{align}\label{Phi(k,c)=2delta c_j}
\Phi(k,\delta)=2\delta c_j,\qquad\text{where }c_j=\cos\frac{j\pi}{N}\text{ for }
1\leq j\leq N-1.
\end{align}
Then there exist \(\varepsilon,\delta_0>0\), independent of
\(\delta\), with \(\delta_0<r\), such that the following statements
describe all Dirac points \((k_*,\alpha_*)\) satisfying $|k_*-k_0|<\varepsilon,\alpha_*\in Y^*$ for every \(0<\delta<\delta_0\).

\begin{enumerate}
\item[(i)]
Suppose that $r\ell k_0=p\pi$ for some $p\in\Z$ and $sk_0\notin\pi\Z$.
There are exactly \(N-1\) nearby Dirac points. Their wavenumbers
\(k_j(\delta)\) are the unique roots near \(k_0\) of (\ref{Phi(k,c)=2delta c_j}) and satisfy
\[
k_j(\delta)
=
k_0+
\frac{2[\cos(sk_0)-(-1)^pc_j]}
{r^2\ell\sin(sk_0)}\delta
+\mc{O}(\delta^2),\qquad 1\leq j\leq N-1.
\]
The corresponding Bloch multipliers are $\e^{\i\alpha_jL}=(-1)^j$.
\item[(ii)]
Suppose that $r\ell k_0\notin\pi\Z$ and $sk_0=q\pi$ for some $q\in\Z$.
There are exactly \(N-1\) nearby Dirac points. Their wavenumbers
\(k_j(\delta)\) are the unique roots near \(k_0\) of (\ref{Phi(k,c)=2delta c_j}) and satisfy
\[
k_j(\delta)
=
k_0+
\frac{2[\cos(r\ell k_0)-(-1)^qc_j]}
{rs\sin(r\ell k_0)}\delta
+\mc{O}(\delta^2),\qquad 1\leq j\leq N-1.
\]
The corresponding Bloch multipliers are $\e^{\i\alpha_jL}=(-1)^j$.
\item[(iii)]
Suppose that \(k_0\ne0\) and $r\ell k_0=p\pi,sk_0=q\pi$ for some $p,q\in\Z$.
There are exactly \(2N-1\) nearby Dirac points, given by $k_*=k_0,\e^{\i\alpha_*L}=(-1)^{N(p+q)}$ and
\[
k_*=k_0\pm\kappa_j(\delta),
\qquad
\e^{\i\alpha_*L}=(-1)^{N(p+q)+j},
\qquad j=1,\ldots,N-1.
\]
where \(\kappa_j(\delta)\) is the unique sufficiently small positive root of (\ref{Phi(k,c)=2delta c_j}), satisfying
\begin{align}\label{equ: kappa general N expand}
\kappa_j(\delta)
=\frac{2}{r}\sin\frac{j\pi}{2N}\sqrt{\frac{\delta}{\ell s}}
+\mc{O}(\delta^{3/2}),
\qquad j=1,\ldots,N-1.
\end{align}

\item[(iv)]
Suppose that \(k_0=0\).
There are exactly \(2(N-1)\) nearby Dirac points, given by
\[
k_*=\pm\kappa_j(\delta),
\qquad
\e^{\i\alpha_*L}=(-1)^j,
\qquad j=1,\ldots,N-1,
\]
where \(\kappa_j(\delta)\) is the unique sufficiently small positive root of (\ref{Phi(k,c)=2delta c_j}) and satisfies
(\ref{equ: kappa general N expand}).
The point \(k_0=0\) itself is not a Dirac wavenumber.
\end{enumerate}
\end{theorem}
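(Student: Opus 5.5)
Under the symmetry hypothesis all cells are identical, so $A(k;\delta)=T^N$ with $T=T^{\mathrm{Sp}}T^{\mathrm{Res}}$, $\det T=1$, and, as computed in the proof of Theorem~\ref{thm: complete classification}, $\operatorname{tr}T=\Phi(k,\delta)/\delta$. By Theorem~\ref{thm: Dirac point equiv}, the task reduces to locating all $k$ near $k_0$ with $T^N=\pm I$, since the Bloch multiplier then equals that sign.

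The first step is a Chebyshev-type lemma. If $T\in SL_2(\mathbb C)$ satisfies $T^N=\pm I$, then $T$ is diagonalizable with eigenvalues $\mu,\mu^{-1}$ where $\mu^{2N}=1$. Two cases arise.
\begin{itemize}
\item If $\mu\ne\pm1$, then $T$ has distinct eigenvalues, and $T^N=\mu^NI$ holds automatically. Writing $\mu=\e^{\i j\pi/N}$, we get $\operatorname{tr}T=2c_j$ for some $1\le j\le N-1$, and $T^N=(-1)^jI$.
\item If $\mu=\pm1$, then $T$ has a repeated eigenvalue, so $T^N=\pm I$ forces $T=\pm I$ (a nontrivial Jordan block has nondiagonal power). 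In that case $A=(\pm1)^NI$.
\end{itemize}
Conversely, $\operatorname{tr}T=2c_j$ with $1\le j\le N-1$ gives $T^N=(-1)^jI$. Hence the Dirac points are exactly the roots of $\Phi(k,\delta)=2\delta c_j$, with multiplier $(-1)^j$, together with the points where $T=\pm I$. As in part (i) of the $N=2$ proof, $T=\pm I$ forces $\sin(r\ell k)=\sin(sk)=0$ when $0<\delta<r$, that is, $n(k)=2N$ and $k\ne0$. Roots for distinct $j$ are distinct because the $c_j$ are distinct, so no double counting occurs.

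For cases (i) and (ii), $T=\pm I$ is excluded near $k_0$. Set $F_j(k,\delta)=\Phi(k,\delta)-2\delta c_j$. Then $F_j(k_0,0)=0$, and $\partial_kF_j(k_0,0)$ equals $-r^2\ell(-1)^p\sin(sk_0)\ne0$ in case (i), and $-rs(-1)^q\sin(r\ell k_0)\ne0$ in case (ii). The sign-change and monotonicity argument from part (i) of Theorem~\ref{thm: complete classification}, applied with one fixed neighborhood valid for all $j$, gives a unique root for each $j$. The first-order coefficient is $-\partial_\delta F_j(k_0,0)/\partial_k F_j(k_0,0)$, and a direct computation of $\partial_\delta\Phi(k_0,0)=2\cos(sk_0)\cos(r\ell k_0)$ yields the stated formulas.

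For case (iv), I will use the evenness substitution $\Psi(y,\delta)$ with $\Psi(y,\delta)-2\delta c_j=2\delta(1-c_j)-r^2\ell s\,y+\mathcal O(\delta y+y^2)$. Since $1-c_j>0$, the argument of part (iv) gives a unique small positive root $\kappa_j$ with $\kappa_j^2=2\delta(1-c_j)/(r^2\ell s)+\mathcal O(\delta^2)$. Because $1-c_j=2\sin^2(j\pi/2N)$, this becomes $\kappa_j=\frac{2}{r}\sin\frac{j\pi}{2N}\sqrt{\delta/(\ell s)}+\mathcal O(\delta^{3/2})$. At $k=0$ we have $T\ne\pm I$ by Example~\ref{ex: k=0}, so $0$ is excluded. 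Evenness then gives $-\kappa_j$ with the same multiplier.

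For case (iii), I will reuse the conjugation identity $T(k_0+x)=(-1)^{p+q}D_xT(x)D_x^{-1}$ from the proof of part (iii) of Theorem~\ref{thm: complete classification}. It gives $A(k_0+x)=(-1)^{N(p+q)}D_xA(x)D_x^{-1}$. Thus $x=0$ is the central point with multiplier $(-1)^{N(p+q)}$. The nonzero small $x$ are exactly the points $\pm\kappa_j$ from (iv), with multiplier $(-1)^{N(p+q)+j}$, and these coincide with roots of $\Phi=2\delta c_j$ near $k_0$.

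The main obstacle I expect is uniformity. I must choose $\varepsilon,\delta_0$ so that, simultaneously for all $N-1$ equations, each root is unique in the window and no root belonging to a different $j$ or to a different reference point enters it. Since $N$ is finite and the derivative bounds are uniform in $j$, taking minima over $j$ handles this.
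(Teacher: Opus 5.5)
Your proposal is correct and follows essentially the same route as the paper: the reduction $A=T^N$, the eigenvalue characterization of $T^N=\pm I$ (roots of $\Phi=2\delta c_j$ with multiplier $(-1)^j$, or $T=\pm I$ forcing $\sin(r\ell k)=\sin(sk)=0$ with $k\neq0$), the implicit function theorem for (i)--(ii), the substitution $y=k^2$ for (iv), and a reduction of (iii) to (iv). The only difference is in (iii): you use the conjugation identity $T(k_0+x)=(-1)^{p+q}D_xT(x)D_x^{-1}$ from the $N=2$ argument, while the paper uses its trace consequence $\Phi(k_0+x,\delta)=(-1)^{p+q}\Phi(x,\delta)$ together with $c_{N-j}=-c_j$. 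One small point of wording: when $p+q$ is odd, $k_0\pm\kappa_j$ solves $\Phi=2\delta c_{N-j}$, not $\Phi=2\delta c_j$, but this does not affect the stated conclusion.
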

\begin{proof}
Under the symmetric configuration,  recall (\ref{equ: T_j Res Sp def}), we have $T_1^{\text{Res}}=\cdots=T_N^{\text{Res}}:=T^{\text{Res}}$ and $T_1^{\text{Sp}}=\cdots=T_N^{\text{Sp}}:=T^{\text{Sp}}$. Write \(T(k;\delta)=T^{\mathrm{Sp}}T^{\mathrm{Res}}\). Then
\begin{align}\label{equ: T(k;delta) properties}
A(k;\delta)=T(k;\delta)^N,\qquad
\det T(k;\delta)=1,\qquad
\operatorname{tr} T(k;\delta)=\frac{\Phi(k,\delta)}{\delta}.
\end{align}

We first establish the exact conditions used in the classification.
By Theorem \ref{thm: Dirac point equiv}, \(k\in\R\) is a Dirac wavenumber precisely
when \(T(k;\delta)^N=\sigma I\) for some \(\sigma\in\{1,-1\}\). If \(T(k;\delta)\ne\pm I\) and \(T(k;\delta)^N=\sigma I\),
then \(T(k;\delta)\) is diagonalizable, since the polynomial
\(z^N-\sigma\) has no repeated roots. Therefore, its eigenvalues are
\(\e^{\pm\i j\pi/N}\) for some \(1\le j\le N-1\). Consequently, (\ref{equ: T(k;delta) properties}) gives $\Phi(k,\delta)=2\delta c_j$ together with $\sigma=(-1)^j$.
Conversely, this equation gives two distinct eigenvalues
\(\e^{\pm\i j\pi/N}\), and hence $T(k;\delta)^N=(-1)^jI$.

For the remaining possibility, assume \(T(k;\delta)=\sigma I\) for some \(\sigma\in\{1,-1\}\).
Then
\(T^{\mathrm{Sp}}=\sigma(T^{\mathrm{Res}})^{-1}\). When $k\ne0$, comparing the off-diagonal entries and multiplying by \(k\) gives
\[
\delta\sin(sk)
=-\sigma r\sin(r\ell k),
\qquad
\frac{\sin(sk)}{\delta}
=-\frac{\sigma}{r}\sin(r\ell k).
\]
Since \(0<\delta<r\), these imply
\(\sin(r\ell k)=\sin(sk)=0\). Conversely, if \(r\ell k=p\pi\) and \(sk=q\pi\), with \(k\ne0\) and \(p,q\in\Z\), then $T^{\mathrm{Res}}=(-1)^pI,T^{\mathrm{Sp}}=(-1)^qI$ and hence
\[
T(k;\delta)=(-1)^{p+q}I,\qquad
A(k;\delta)=(-1)^{N(p+q)}I.
\]
Thus, a nonzero wavenumber \(k\) is a Dirac wavenumber if and only if
\begin{align}\label{equ: complete symmetric Dirac condition}
\Phi(k,\delta)=2\delta c_j
\quad\text{for some }1\le j\le N-1,
\qquad\text{or}\qquad
r\ell k,\;sk\in\pi\Z.
\end{align}
In the first case, the corresponding Bloch multiplier is
\(\e^{\i\alpha L}=(-1)^j\); in the second case, writing
\(r\ell k=p\pi\) and \(sk=q\pi\), it is
\(\e^{\i\alpha L}=(-1)^{N(p+q)}\).

\medskip
\noindent
(i) Suppose that \(r\ell k_0=p\pi\) and \(sk_0\notin\pi\Z\).
At \((k_0,0)\),
\[
\Phi(k_0,0)=0,\quad
\partial_k\Phi(k_0,0)
=-(-1)^p r^2\ell\sin(sk_0)\ne0,\quad \partial_\delta\Phi(k_0,0)=2(-1)^p\cos(sk_0).
\]
The implicit function theorem therefore gives, for each
\(1\le j\le N-1\), a unique root \(k_j(\delta)\) near \(k_0\) of
\(\Phi(k,\delta)=2\delta c_j\). Taylor expansion then yields
\[
k_j(\delta)-k_0
=-\frac{\partial_\delta\Phi(k_0,0)-2c_j}{\partial_k\Phi(k_0,0)}\delta
+\mc{O}(\delta^2)
=\frac{2[\cos(sk_0)-(-1)^p c_j]}
{r^2\ell\sin(sk_0)}\delta+\mc{O}(\delta^2).
\]
The Bloch multipliers are \((-1)^j\). The roots are distinct because the \(c_j\)'s are distinct. By shrinking the neighborhood, \(sk\notin\pi\Z\) throughout it, so the second condition in (\ref{equ: complete symmetric Dirac condition}) contributes no additional point.

\medskip
\noindent
(ii) Suppose that \(sk_0=q\pi\) and \(r\ell k_0\notin\pi\Z\).
Now
\[
\Phi(k_0,0)=0,
\quad
\partial_k\Phi(k_0,0)
=-(-1)^q rs\sin(r\ell k_0)\ne0,
\quad
\partial_\delta\Phi(k_0,0)
=2(-1)^q\cos(r\ell k_0).
\]
The same argument gives exactly \(N-1\) distinct roots, with
\[
k_j(\delta)-k_0
=-\frac{\partial_\delta\Phi(k_0,0)-2c_j}{\partial_k\Phi(k_0,0)}\delta
+\mc{O}(\delta^2)=
\frac{2[\cos(r\ell k_0)-(-1)^q c_j]}
{rs\sin(r\ell k_0)}\delta+\mc{O}(\delta^2),
\]
and Bloch multipliers \((-1)^j\).

\medskip
\noindent
(iv) Next, we consider zero. By Example \ref{ex: k=0} and Theorem \ref{thm: Dirac point equiv}, $k_0=0$ itself is not a Dirac wavenumber. Thus, we only need to consider nonzero wavenumbers near zero. Since \(\Phi(k,\delta)\) is even in \(k\),
\(\Phi(\sqrt{\mr{z}},\delta)\) extends analytically to a neighborhood
of \((\mr{z},\delta)=(0,0)\). Its expansion is
\[
\Phi(\sqrt{\mr{z}},\delta)-2\delta c_j
=
2(1-c_j)\delta-r^2\ell s\,\mr{z}
+\mc{O}(\delta \mr{z}+\mr{z}^2).
\]
In particular,
\[
\left.
\partial_z\bigl[\Phi(\sqrt z,\delta)-2\delta c_j\bigr]
\right|_{(0,0)}
=-r^2\ell s\ne0.
\]
The implicit function theorem gives a unique root \(z_j(\delta)\)
near zero, satisfying
\[
z_j(\delta)
=
\frac{2(1-c_j)}{r^2\ell s}\delta+\mc{O}(\delta^2)>0.
\]
Thus, the only roots near zero are
\(k=\pm\kappa_j(\delta)\), where
\[
\begin{aligned}
\kappa_j(\delta)
=\sqrt{z_j(\delta)}=
\frac{2}{r}\sin\frac{j\pi}{2N}
\sqrt{\frac{\delta}{\ell s}}
+\mc{O}(\delta^{3/2}).
\end{aligned}
\]
They are distinct and have Bloch multipliers \((-1)^j\).

\medskip
\noindent
(iii) Finally, suppose that \(k_0\ne0\),
\(r\ell k_0=p\pi\), and \(sk_0=q\pi\).
Then
\[
T(k_0;\delta)=(-1)^{p+q}I_2,\qquad
A(k_0;\delta)=(-1)^{N(p+q)}I_2.
\]
Thus, \(k_0\) is a Dirac wavenumber with the stated multiplier. Moreover, for \(x\) near zero,
\[
\Phi(k_0+x,\delta)=(-1)^{p+q}\Phi(x,\delta).
\]
If \(p+q\) is even, the roots of
\(\Phi(k_0+x,\delta)=2\delta c_j\) are
\(x=\pm\kappa_j(\delta)\).
If \(p+q\) is odd, use \(c_{N-j}=-c_j\); the same collection of
roots is obtained after replacing \(j\) by \(N-j\).
Therefore, the additional wavenumbers are precisely
\(k_0\pm\kappa_j(\delta)\).
Their Bloch multipliers are
\[
\begin{cases}
(-1)^j,&p+q\text{ even},\\
(-1)^{N-j},&p+q\text{ odd},
\end{cases}
=
(-1)^{N(p+q)+j}.
\]
After shrinking the neighborhood, \(k_0\) is the only wavenumber satisfying \(r\ell k,sk\in\pi\mathbb Z\). Hence, there are exactly \(1+2(N-1)=2N-1\) Dirac points.
\end{proof}

\begin{remark}[Primitive period and band folding]\label{rem:folding}
In Theorem~\ref{thm: complete classification symmetric general N},
the medium has the smaller period $d=\ell+s$, whereas the cell
used in the theorem has length $L=Nd$. If
$\operatorname{tr}T=2\cos\beta$, the two primitive-cell
multipliers are $\e^{\pm \i\beta}$. They become the same
$L$-cell multiplier at $\beta=j\pi/N$.
Consequently, the crossings determined by
$\Phi=2\delta\cos(j\pi/N)$ are exact crossings of folded bands.
They need not survive a perturbation that breaks translation by
$d$ while preserving translation by $L$. The scalar-monodromy
criterion remains valid, but the smaller-period symmetry that
enforced it is then absent. This observation is separate from
the nonzero common-resonance crossings at $T=\pm I$.
\end{remark}

\begin{figure}[htbp]
\centering
\includegraphics[width=\textwidth]{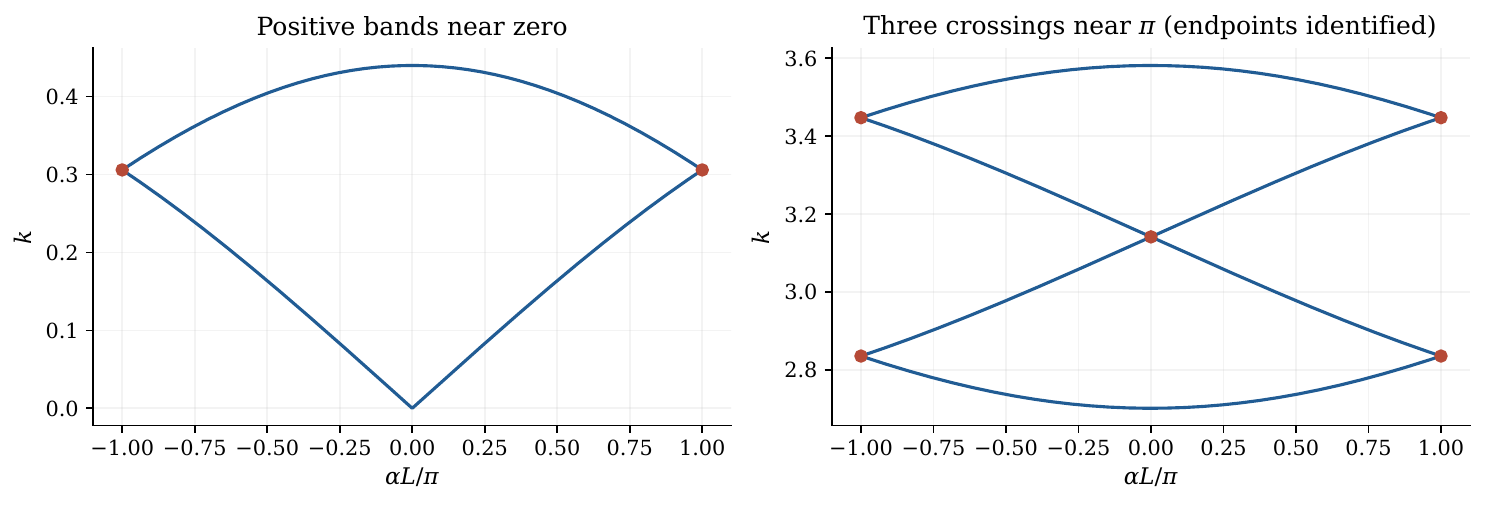}
\caption{Exact dispersion for the completely symmetric two-resonator
cell, with $r=\ell=s=1$, $L=4$, and $\delta=0.05$.
Left: the two positive subwavelength bands meet at the folded-zone
edge. Right: four bands near $k_0=\pi$ have the central crossing
at $\alpha=0$ and the two shifted crossings at the identified
zone endpoints. Markers show the exact scalar-monodromy points.
Negative-frequency subwavelength branches are not plotted.}
\label{fig:dirac}
\end{figure}

\section{Zak phase}\label{sec: Zak phase}
The Zak phase describes the holonomy of a simple isolated Bloch
band over the Brillouin zone~\cite{zak1989,Lin2022Topological}.
Here we define it using the quasiperiodic Bloch eigenmode itself,
rather than its periodic part. Fix \(\delta>0\). Let \(p_\delta\) and \(w_\delta\) be the
\(L\)-periodic extensions to \(\R\) of \(a^{-1}\) and \(b\),
respectively, defined on \([x_1^-,x_{N+1}^-]\) in the proof
of Proposition~\ref{thm: k real}. The equation becomes
\[
-(p_\delta u')'=k^2w_\delta u,
\]
with continuity of \(u\) and \(p_\delta u'\) at interfaces.
In this section, \(Y\) denotes a period cell of length \(L\),
not necessarily \([x_1^-,x_{N+1}^-]\).
Eigenmodes are understood through their quasiperiodic extensions
to \(\R\). For the periodic part \(v_\alpha(x)=\e^{-\i\alpha x}u_\alpha(x)\),
the fiber operator on \(L^2(Y,w_\delta\,\dd x)\) is
\begin{equation}\label{eq:zak-fiber}
H_\delta(\alpha)
=-w_\delta^{-1}(\partial_x+\i\alpha)
p_\delta(\partial_x+\i\alpha).
\end{equation}
It can equivalently be defined by its closed form on
\(H^1_{\mathrm{per}}(Y)\), whose domain is independent of
\(\alpha\). Its compact resolvent and analytic dependence on
\(\alpha\) allow a smooth normalized eigenfunction along any
simple isolated band of \(k^2\); see~\cite{kuchment2016overview}.

We use the weighted inner product
\[
\langle f,g\rangle_w
:=\int_Yw_\delta(x)\overline{f(x)}g(x)\,\dd x,
\]
which is conjugate-linear in its first argument.
Choose a smooth gauge satisfying
\(\langle u_\alpha,u_\alpha\rangle_w=1\) and
\(u_{\pi/L}=u_{-\pi/L}\).
The Zak phase is defined by
\begin{equation}\label{eq:zak-definition}
\varphi_{\rm Zak}
=\i\int_{-\pi/L}^{\pi/L}
\langle u_\alpha,\partial_\alpha u_\alpha\rangle_w
\,\dd\alpha
\pmod{2\pi}.
\end{equation}
An overall positive constant multiplying \(w_\delta\)
has no effect after normalization. For comparison, using the same spatial origin, the periodic-part
convention is
\begin{equation}\label{eq:zak-periodic-definition}
\varphi_{\rm per}
=\i\int_{-\pi/L}^{\pi/L}
\langle v_\alpha,\partial_\alpha v_\alpha\rangle_w
\,\dd\alpha
\pmod{2\pi},
\end{equation}
with endpoint identification
\(v_{\pi/L}(x)=\e^{-2\pi\i x/L}v_{-\pi/L}(x)\).

\begin{remark}
Conventions (\ref{eq:zak-definition}) and (\ref{eq:zak-periodic-definition}) are both gauge invariant modulo \(2\pi\):
an endpoint-compatible gauge transformation changes either
integral by
\(-\chi(\pi/L)+\chi(-\pi/L)\in2\pi\Z\).
At a fixed spatial origin, changing the integration cell leaves
\(\varphi_{\rm per}\) unchanged, whereas
\(\varphi_{\rm Zak}\) generally changes.
Conversely, changing coordinates to \(x'=x-x_0\), while retaining
the same physical cell, leaves \(\varphi_{\rm Zak}\) unchanged
and shifts \(\varphi_{\rm per}\) by \(-2\pi x_0/L\).
Thus, the quasiperiodic convention requires specifying the
period cell, but not the spatial origin, whereas the periodic-part
convention requires specifying the spatial origin, but not the
period cell. For \(v_\alpha(x)=\e^{-\i\alpha x}u_\alpha(x)\), the two Zak-phase conventions satisfy
\begin{equation*}\label{eq:zak-conventions-difference}
\varphi_{\rm per}-\varphi_{\rm Zak}
=
\int_{-\pi/L}^{\pi/L}\int_Y
xw_\delta|u_\alpha|^2\,\dd x\,\dd\alpha
\pmod{2\pi}.
\end{equation*}
We adopt the quasiperiodic convention to apply the eigenmode
asymptotics directly, specifying \(Y\) for each parameter
configuration.
\end{remark}

Time-reversal symmetry alone does not quantize the Zak phase.
Under reflection symmetry, the quasiperiodic Zak phase is
quantized to \(0\) or \(\pi\) modulo \(2\pi\)~\cite{CoutantLombard2024} and agrees with
the periodic-part Zak phase whose spatial origin is the
reflection center, provided both are computed on the same
cell centered there.

\begin{proposition}[Reflection symmetry]\label{prop:zak}
Suppose that \(p_\delta,w_\delta\) are reflection symmetric
about \(x_c\), and consider a simple band isolated over the
whole Brillouin zone. Let \(\xi_0,\xi_{\pi/L}\in\{1,-1\}\) be
the reflection eigenvalues about \(x_c\) of the Bloch modes
at \(\alpha=0,\pi/L\), respectively.
\begin{enumerate}
\item[(i)]
Define \(\varphi_{\rm Zak}\) using
\(Y=[x_c-L/2,x_c+L/2]\), and define \(\varphi_{\rm per}\)
with spatial origin at \(x_c\). Then
\[
\varphi_{\rm per}=\varphi_{\rm Zak}\pmod{2\pi},
\qquad
\e^{\i\varphi_{\rm Zak}}=\xi_0\xi_{\pi/L}.
\]
In particular, both phases are quantized to \(0\) or \(\pi\)
modulo \(2\pi\).
\item[(ii)]
The point \(x_c+L/2\) is also a reflection center.
Replacing the period cell \([x_c-L/2,x_c+L/2]\) by
\([x_c,x_c+L]\) changes \(\varphi_{\rm Zak}\) by \(\pi\)
modulo \(2\pi\). Likewise, moving the spatial origin from
\(x_c\) to \(x_c+L/2\) changes \(\varphi_{\rm per}\) by
\(\pi\) modulo \(2\pi\).
\end{enumerate}
\end{proposition}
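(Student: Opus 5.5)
The plan is to use the reflection operator $(\mathcal P f)(x)=f(2x_c-x)$ to relate the Bloch mode at $\alpha$ to the one at $-\alpha$. The resulting identity for the Berry connection integrates to the stated endpoint formula.

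\emph{Setup.} Since $p_\delta,w_\delta$ are symmetric about $x_c$, $\mathcal P$ commutes with $-(p_\delta u')'=k^2w_\delta u$ and preserves the transmission conditions. It maps $\alpha$-quasiperiodic functions to $(-\alpha)$-quasiperiodic ones. On the symmetric cell $Y=[x_c-L/2,x_c+L/2]$ it preserves $\langle\cdot,\cdot\rangle_w$, and $\mathcal P^2=I$. The band is simple and isolated, and $k^2$ is the same at $\pm\alpha$ (for instance by complex conjugation, since the coefficients are real). Hence $\mathcal P u_\alpha$ is a normalized eigenmode at $-\alpha$, so
\[
\mathcal P u_\alpha=\e^{\i\theta(\alpha)}u_{-\alpha},\qquad \e^{\i\theta(\alpha)}=\langle u_{-\alpha},\mathcal P u_\alpha\rangle_w .
\]
The right-hand side is smooth and unimodular, so a smooth real lift $\theta$ exists on $[-\pi/L,\pi/L]$. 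At $\alpha=0$ this gives $\e^{\i\theta(0)}=\xi_0$. At $\alpha=\pi/L$ the quasiperiodicity multiplier is $-1$ at both $\pm\pi/L$, so $\mathcal P$ preserves the eigenspace. The gauge identification $u_{\pi/L}=u_{-\pi/L}$ then gives $\e^{\i\theta(\pi/L)}=\xi_{\pi/L}$. Checking these endpoint identifications carefully, together with the existence of the smooth lift $\theta$, is where I expect the main care to be needed; the rest is computation.

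\emph{Connection identity and quantization.} Write $\mathcal A(\alpha)=\i\langle u_\alpha,\partial_\alpha u_\alpha\rangle_w$. I will differentiate $g(\alpha):=u_{-\alpha}=\e^{-\i\theta(\alpha)}\mathcal P u_\alpha$ in two ways, using that $\mathcal P$ is unitary on $Y$. The two expressions for $\i\langle g,\partial_\alpha g\rangle_w$ are $-\mathcal A(-\alpha)$ and $\theta'(\alpha)+\mathcal A(\alpha)$, which gives
\[
\mathcal A(\alpha)+\mathcal A(-\alpha)=-\theta'(\alpha).
\]
Folding the integral onto $[0,\pi/L]$ then yields
\[
\varphi_{\rm Zak}=\int_0^{\pi/L}\bigl(\mathcal A(\alpha)+\mathcal A(-\alpha)\bigr)\,\dd\alpha=-\bigl(\theta(\pi/L)-\theta(0)\bigr).
\]
Since $\xi_0,\xi_{\pi/L}\in\{\pm1\}$, this gives $\e^{\i\varphi_{\rm Zak}}=\overline{\xi_{\pi/L}}\,\xi_0=\xi_0\xi_{\pi/L}$, which proves the quantization.

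\emph{Equality with $\varphi_{\rm per}$.} Measure $x$ from $x_c$ and use the difference formula in the preceding remark. From $|(\mathcal P u_\alpha)(x)|=|u_{-\alpha}(x)|$ we get
\[
\int_Y xw_\delta|u_\alpha|^2\,\dd x=-\int_Y xw_\delta|u_{-\alpha}|^2\,\dd x,
\]
so the $\alpha$-integral of this quantity vanishes. Hence $\varphi_{\rm per}=\varphi_{\rm Zak}$ modulo $2\pi$.

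\emph{Part (ii).} The point $x_c+L/2$ is a reflection center by $L$-periodicity of the coefficients. Passing from the cell $[x_c-L/2,x_c+L/2]$ to $[x_c,x_c+L]$ replaces the left half by its translate. Using $u(x+L)=\e^{\i\alpha L}u(x)$, we have $\partial_\alpha u(x+L)=\e^{\i\alpha L}(\i Lu+\partial_\alpha u)(x)$. So the integrand changes by $\i\cdot\i L\,m(\alpha)$, where $m(\alpha)$ is the $w$-mass of $u_\alpha$ on the left half. Reflection exchanges the two halves' masses at $\alpha$ and $-\alpha$, so $\int m\,\dd\alpha=\tfrac12\cdot 2\pi/L$. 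The total change is therefore $-\pi\equiv\pi$. For $\varphi_{\rm per}$, the preceding remark gives a shift of $-2\pi x_0/L$ with $x_0=L/2$, which is again $\pi$ modulo $2\pi$.
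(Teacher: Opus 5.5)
Your proof is correct. Part (i) follows the paper's argument. Your phase $\theta$ is the paper's $-\chi$: the paper writes $u_{-\alpha}=\e^{\i\chi(\alpha)}\mathcal R u_\alpha$ and obtains $\mathcal A(-\alpha)=\chi'(\alpha)-\mathcal A(\alpha)$. It identifies the endpoint values exactly as you do, and proves $\varphi_{\rm per}=\varphi_{\rm Zak}$ by the same odd first-moment cancellation.

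Part (ii) takes a genuinely different route.

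\emph{The paper's route.} It notes that $u_\alpha(2x_c+L-x)=\e^{\i\alpha L}u_\alpha(2x_c-x)$. Hence the reflection eigenvalues about $x_c+L/2$ are $\xi_0$ and $-\xi_{\pi/L}$, and reapplying (i) to the cell $[x_c,x_c+L]$ gives the shift by $\pi$. For $\varphi_{\rm per}$ it combines the equality in (i) at both reflection centers.

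\emph{Your route.} You compute directly how the connection changes when the left half-cell is translated by $L$, obtaining $\mathcal A\mapsto\mathcal A-L\,m(\alpha)$. You use reflection only to evaluate $\int m\,\dd\alpha=\pi/L$, via $m(\alpha)+m(-\alpha)=1$. This relies on the normalization being cell-independent, since $|u_\alpha|^2$ is $L$-periodic, and that holds. For $\varphi_{\rm per}$ you quote the origin-shift formula from the preceding remark.

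\emph{Trade-offs.} The paper's version is shorter and stays within the endpoint-parity framework. It also produces the new reflection eigenvalues explicitly, which is exactly what Theorem~\ref{thm: Zak phases N2}(d) uses. Your version gives a symmetry-free formula: any change of cell shifts $\varphi_{\rm Zak}$ by $-L\int\!\int w_\delta|u_\alpha|^2$ over the translated piece. It thus isolates precisely where reflection enters. It also makes clear that the $\varphi_{\rm per}$ half of (ii) needs no symmetry at all, since moving the origin by $L/2$ always shifts $\varphi_{\rm per}$ by $\pi$.
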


\begin{proof}
As for (i), consider \(Y=[x_c-L/2,x_c+L/2]\) and let
\(\mathcal Rf(x)=f(2x_c-x)\).
It is unitary for the weighted inner product and maps Bloch
eigenmodes at \(\alpha\) to those at \(-\alpha\). At \(\alpha=0,\pi/L\), the Bloch multipliers for
\(\alpha\) and \(-\alpha\) coincide, so \(\mathcal R\)
preserves the corresponding one-dimensional eigenspace.
Hence \(\mathcal Ru_0=\xi_0u_0\) and
\(\mathcal Ru_{\pi/L}=\xi_{\pi/L} u_{\pi/L}\), with
\(\xi_0,\xi_{\pi/L}\in\{1,-1\}\) because \(\mathcal R^2=I\). Simplicity and normalization give a smooth real phase
\(\chi(\alpha)\) such that
\[
u_{-\alpha}=\e^{\i\chi(\alpha)}\mathcal Ru_\alpha,
\qquad 0\leq\alpha\leq\pi/L.
\]
If \(\mathcal A(\alpha)
=\i\langle u_\alpha,\partial_\alpha u_\alpha\rangle_w\),
differentiation, with the minus sign from the variable
\(-\alpha\), yields
\[
\mathcal A(-\alpha)=\chi'(\alpha)-\mathcal A(\alpha).
\]
Consequently,
\(\varphi_{\rm Zak}=\chi(\pi/L)-\chi(0)\pmod{2\pi}\).
At zero, \(\mathcal Ru_0=\xi_0u_0\) gives
\(\e^{\i\chi(0)}=\xi_0\).
At the zone edge, \(u_{-\pi/L}=u_{\pi/L}\) and
\(\mathcal Ru_{\pi/L}=\xi_{\pi/L} u_{\pi/L}\) give
\(\e^{\i\chi(\pi/L)}=\xi_{\pi/L}\).
Thus, \(\e^{\i\varphi_{\rm Zak}}=\xi_0\xi_{\pi/L}\).

With spatial origin at \(x_c\), the periodic part is
\(v_\alpha(x)=\e^{-\i\alpha(x-x_c)}u_\alpha(x)\).
Using the same cell \(Y\), we obtain
\[
\varphi_{\rm per}-\varphi_{\rm Zak}
=
\int_{-\pi/L}^{\pi/L}\int_Y
(x-x_c)w_\delta|u_\alpha|^2\,\dd x\,\dd\alpha
\pmod{2\pi}.
\]
The reflection relation and the substitution \(x\mapsto2x_c-x\)
give
\[
\int_Y(x-x_c)w_\delta|u_{-\alpha}|^2\,\dd x
=
-\int_Y(x-x_c)w_\delta|u_\alpha|^2\,\dd x.
\]
Hence, the preceding integral vanishes, proving
\(\varphi_{\rm per}=\varphi_{\rm Zak}\pmod{2\pi}\).

As for (ii), periodicity and reflection symmetry imply
\(p_\delta(2x_c+L-x)=p_\delta(x)\), and likewise for
\(w_\delta\), so \(x_c+L/2\) is also a reflection center.
Moreover, quasiperiodicity gives
\[
u_\alpha(2x_c+L-x)
=\e^{\i\alpha L}u_\alpha(2x_c-x).
\]
Thus, the reflection eigenvalues about \(x_c+L/2\) are
\(\xi_0\) and \(-\xi_{\pi/L}\) at zero and the zone edge,
respectively. Applying (i) to \([x_c,x_c+L]\) shows that
the quasiperiodic Zak phase changes by \(\pi\) modulo \(2\pi\).
By (i), the periodic-part Zak phases with origins at the
respective reflection centers are equal to these quasiperiodic
phases, and therefore also differ by \(\pi\) modulo \(2\pi\).
\end{proof}

When $\delta\to0$, the endpoint reflection eigenvalues can be determined from
the parity of the leading eigenmode profiles, provided that
the asymptotic remainders converge uniformly.

\begin{lemma}\label{lem:parity asymptotics}
Let $k_\delta(\alpha)$ be a simple isolated band for all
sufficiently small $\delta>0$, and assume the reflection
symmetry hypotheses of Proposition~\ref{prop:zak}.
At the endpoints $\alpha_*=0,\pi/L$, suppose that the
corresponding eigenmodes can be normalized so that
\[
 u_\delta^{\alpha_*}
 =u_0^{\alpha_*}+o(1)
 \qquad\text{uniformly on }[x_1^-,x_{N+1}^-].
\]
where $u_0^{\alpha_*}\neq0$ and $\mathcal R u_0^{\alpha_*}
 =\xi_{\alpha_*}u_0^{\alpha_*}$ for some $\xi_{\alpha_*}\in\{1,-1\}$. Then, for all sufficiently small $\delta>0$,
\[
 \mathcal R u_\delta^{\alpha_*}
 =\xi_{\alpha_*}u_\delta^{\alpha_*},
 \qquad \alpha_*=0,\pi/L,
\]
and the Zak phase $\theta_\delta$ satisfies $\e^{\i\theta_\delta}=\xi_0\xi_{\pi/L}$.
Consequently, $\theta_\delta=0$ modulo $2\pi$ if the two
leading profiles have the same parity, and
$\theta_\delta=\pi$ modulo $2\pi$ if they have opposite
parities.
\end{lemma}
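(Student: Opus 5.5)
The plan is to use the fact that, at each endpoint, the exact Bloch mode $u_\delta^{\alpha_*}$ already has a definite reflection parity. Once we know that, comparing it with the leading profile $u_0^{\alpha_*}$ fixes the sign. Proposition~\ref{prop:zak} then converts the two signs into the Zak phase.

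\textbf{Step 1: the exact mode has a parity.} Fix $\alpha_*\in\{0,\pi/L\}$. As in the proof of Proposition~\ref{prop:zak}, the operator $\mathcal R f(x)=f(2x_c-x)$ maps Bloch eigenmodes at $\alpha$ to Bloch eigenmodes at $-\alpha$, with the same wavenumber $k$. At $\alpha_*=0,\pi/L$ the multipliers $\e^{\pm\i\alpha_*L}$ coincide. The band is simple, so the eigenspace at $(k_\delta(\alpha_*),\alpha_*)$ is one-dimensional, and therefore
\[
\mathcal R u_\delta^{\alpha_*}=\xi_\delta u_\delta^{\alpha_*}.
\]
Because $\mathcal R^2=I$, the sign satisfies $\xi_\delta\in\{1,-1\}$.

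\textbf{Step 2: identify the sign with $\xi_{\alpha_*}$.} Apply $\mathcal R$ to the expansion $u_\delta^{\alpha_*}=u_0^{\alpha_*}+o(1)$ and subtract $\xi_{\alpha_*}u_\delta^{\alpha_*}$. Since $\mathcal R u_0^{\alpha_*}=\xi_{\alpha_*}u_0^{\alpha_*}$, this gives
\[
(\xi_\delta-\xi_{\alpha_*})\,u_\delta^{\alpha_*}=\mathcal R(o(1))-\xi_{\alpha_*}\,o(1).
\]
Suppose $\xi_\delta\ne\xi_{\alpha_*}$. Then $\xi_\delta-\xi_{\alpha_*}=\pm2$, so $u_\delta^{\alpha_*}=o(1)$, and hence $u_0^{\alpha_*}=o(1)$ uniformly. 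This contradicts $u_0^{\alpha_*}\ne0$. So $\xi_\delta=\xi_{\alpha_*}$ for all sufficiently small $\delta$.

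\textbf{Main obstacle: where the estimate holds.} The uniform estimate is stated on $[x_1^-,x_{N+1}^-]$, but $\mathcal R$ maps this cell to its reflection about $x_c$. I would handle this with quasiperiodicity. The extended modes satisfy $u(x+L)=\e^{\i\alpha_*L}u(x)$ with $|\e^{\i\alpha_*L}|=1$, and $u_0^{\alpha_*}$ is extended with the same multiplier. Hence the uniform $o(1)$ bound on one cell carries over to every translate, and so to the whole line. It is enough to evaluate the identity of Step 2 at a single point $x_0$ where $u_0^{\alpha_*}(x_0)\ne0$. Its reflected point $2x_c-x_0$ lies in some translate of the cell, where the bound still holds. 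This yields the contradiction pointwise.

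\textbf{Step 3: conclude.} Proposition~\ref{prop:zak}(i) gives $\e^{\i\theta_\delta}=\xi_0\xi_{\pi/L}$. If the two leading profiles have the same parity, the product is $1$ and $\theta_\delta=0$ modulo $2\pi$. If they have opposite parities, the product is $-1$ and $\theta_\delta=\pi$ modulo $2\pi$.
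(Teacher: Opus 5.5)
Your proposal is correct and follows essentially the same argument as the paper. In both, one-dimensionality of the endpoint eigenspace gives an exact sign $\xi_\delta\in\{1,-1\}$; if $\xi_\delta=-\xi_{\alpha_*}$, the paper uses $(\mathcal R+\xi_{\alpha_*}I)u_\delta^{\alpha_*}=0$ to derive $2\|u_0^{\alpha_*}\|\le 2\|u_0^{\alpha_*}-u_\delta^{\alpha_*}\|=o(1)$, which is your Step~2 in norm form, and both then invoke Proposition~\ref{prop:zak}. Your quasiperiodicity remark supplies the justification the paper leaves implicit when it says that reflection preserves the uniform norm on $[x_1^-,x_{N+1}^-]$: extended with the same unimodular multiplier, $|u_\delta^{\alpha_*}-u_0^{\alpha_*}|$ is $L$-periodic.
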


\begin{proof}
Fix $\alpha_*\in\{0,\pi/L\}$.
By reflection symmetry, $\mathcal R u_\delta^{\alpha_*}$
belongs to the same eigenspace as $u_\delta^{\alpha_*}$.
Since this eigenspace is one-dimensional and
$\mathcal R^2=I$, there exists
$\xi_{\delta,\alpha_*}\in\{1,-1\}$ such that $\mathcal R u_\delta^{\alpha_*}
 =\xi_{\delta,\alpha_*}u_\delta^{\alpha_*}$. If $\xi_{\delta,\alpha_*}=-\xi_{\alpha_*}$, then
\[
 (\mathcal R+\xi_{\alpha_*}I)
 u_\delta^{\alpha_*}=0,
 \qquad
 (\mathcal R+\xi_{\alpha_*}I)
 u_0^{\alpha_*}=2\xi_{\alpha_*}u_0^{\alpha_*}.
\]
Using that reflection
preserves the uniform norm, we obtain
\[
 \begin{aligned}
 2\|u_0^{\alpha_*}\|_{C[x_1^-,x_{N+1}^-]}
 =
 \bigl\|(\mathcal R+\xi_{\alpha_*}I)
       (u_0^{\alpha_*}-u_\delta^{\alpha_*})\bigr\|_{C[x_1^-,x_{N+1}^-]}
\leq
 2\|u_0^{\alpha_*}-u_\delta^{\alpha_*}\|_{C[x_1^-,x_{N+1}^-]}
 =o(1).
 \end{aligned}
\]
Since $u_0^{\alpha_*}\neq0$, this is impossible for
sufficiently small $\delta$.
Hence $\xi_{\delta,\alpha_*}=\xi_{\alpha_*}$ at both
endpoints. Proposition~\ref{prop:zak} therefore gives $\e^{\i\theta_\delta}=\xi_0\xi_{\pi/L}$, which proves the assertion.
\end{proof}

We now specialize to $N=2$ and identify the geometric
conditions under which the reflection symmetry holds.

\begin{proposition}
\label{prop:reflection N=2}
Let \(N=2\) and \(0<\delta<\min\{1,r_1,r_2\}\). The material coefficients
\(p_\delta,w_\delta\) are reflection symmetric if and only if
at least one of the following conditions holds:
\begin{enumerate}
\item[(i)]
\(r_1=r_2\) and \(\ell_1=\ell_2\). The corresponding
reflection centers are
\(x_c=x_1^++s_1/2\) modulo \(L/2\);

\item[(ii)]
\(s_1=s_2\). The corresponding reflection centers are
\(x_c=x_1^-+\ell_1/2\) modulo \(L/2\).
\end{enumerate}
These give all reflection centers; if both conditions hold,
that is, if the completely symmetric configuration (S) holds,
both families of centers occur.
\end{proposition}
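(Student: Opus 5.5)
The plan is to reduce reflection symmetry of the $L$-periodic pair $(p_\delta,w_\delta)$ to a combinatorial statement about the cyclic sequence of intervals in a cell. First I record the values the coefficients take: $p_\delta=a^{-1}$ equals $\delta^{-1/2}$ on resonators and $\delta^{1/2}$ on spacings, while $w_\delta=b$ equals $r_j^2\delta^{-1/2}$ on $D_j$ and $\delta^{1/2}$ on spacings. Since $0<\delta<1$, we have $\delta^{-1/2}\neq\delta^{1/2}$, so $p_\delta$ alone tells resonators from spacings. On resonators, $w_\delta$ records $r_j$. Thus $(p_\delta,w_\delta)$ is exactly the same data as the periodic word of labelled intervals $(D_1,\ell_1,r_1),(S_1,s_1),(D_2,\ell_2,r_2),(S_2,s_2)$ repeated with period $L$. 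Reflection about $x_c$ is a symmetry if and only if the map $x\mapsto 2x_c-x$ sends each interface to an interface, reverses the order of intervals, and preserves type, length and, for resonators, the ratio $r$.

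Next I locate the possible centres. A reflection maps an interval onto an interval of the same type, so $x_c$ lies at the midpoint of some interval: either a resonator or a spacing. A centre cannot lie at an interface, because the two intervals adjacent to it would be mapped onto each other, and they have different types.

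\textbf{Case 1: $x_c$ is the midpoint of a resonator.} Say $x_c=x_1^-+\ell_1/2$; the centre at the midpoint of $D_2$ is handled by the same argument. The reversed word read outward from $D_1$ pairs $S_1$ (to the right) with $S_2$ of the previous cell (to the left), and pairs $D_2$ with the previous copy of $D_2$. The conditions are therefore exactly $s_1=s_2$, with no constraint on $(\ell_j,r_j)$. Conversely, if $s_1=s_2$, the two neighbourhoods of $D_1$ match and periodicity finishes the check. By Proposition~\ref{prop:zak}(ii), $x_c+L/2$ is then also a centre, and it is the midpoint of $D_2$. This gives the family in (ii).

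\textbf{Case 2: $x_c$ is the midpoint of a spacing.} Say $x_c=x_1^++s_1/2$. Now $D_1$ pairs with $D_2$ and $S_2$ pairs with itself. The symmetry holds if and only if $\ell_1=\ell_2$ and $r_1=r_2$, which gives (i); its partner centre $x_c+L/2$ is the midpoint of $S_2$.

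Exhaustiveness follows because every centre falls into one of the two cases, and each case forces the stated condition. The centres are determined modulo $L/2$, since the period $L$ together with the companion centre from Proposition~\ref{prop:zak}(ii) generates all of them. If (S) holds, both cases occur.

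The main obstacle is rigour in the first step: showing that equality of the piecewise constant functions under reflection forces the interface sets to correspond and the labels to agree. I would handle this by comparing the discontinuity sets of $p_\delta$, which are exactly the interfaces, and then reading off the values on each interval. Here $\delta<\min\{1,r_1,r_2\}$ guarantees $r_j^2\delta^{-1/2}\ne\delta^{1/2}$, so no accidental equalities between resonator and spacing values arise.
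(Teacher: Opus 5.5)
Your proposal is correct and follows essentially the same route as the paper. In both arguments the coefficient values separate resonators from spacings, so a centre cannot lie at an interface and must be the midpoint of either a resonator (forcing $s_1=s_2$) or a spacing (forcing $\ell_1=\ell_2$, $r_1=r_2$), with the companion centre at distance $L/2$.

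One harmless slip: $\delta<\min\{1,r_1,r_2\}$ does not guarantee $r_j^2\delta^{-1/2}\neq\delta^{1/2}$ (take $r_j=1/2$, $\delta=1/4$). You never use this, because $p_\delta$ alone already distinguishes the two interval types once $\delta<1$.
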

\begin{proof}
By \(L\)-periodicity, it suffices to classify reflection
centers in \([x_1^-,x_3^-)\).
Since \(\delta<\min\{1,r_1,r_2\}\), the values of \(p_\delta\) and $w_\delta$ on resonators and spacings are distinct. A reflection preserving
\(p_\delta,w_\delta\) must therefore preserve the two types
of intervals. Its center cannot be an interface, since
reflection would exchange these types locally.
Thus, the center lies inside a resonator or a spacing.
The interval containing it is mapped to itself, so the
center is its midpoint.

If the center lies in a spacing, reflection exchanges its
two adjacent resonators. Equality of their lengths and
material coefficients gives \(\ell_1=\ell_2\) and
\(r_1=r_2\). The possible centers in \([x_1^-,x_3^-)\) are $x_1^++s_1/2$ or $x_2^++s_2/2$. Their difference is
\(\ell_2+(s_1+s_2)/2=L/2\), yielding the family in (i)
after periodic extension.

If the center lies in a resonator, reflection exchanges its
two adjacent spacings, giving \(s_1=s_2\).
The possible centers in \([x_1^-,x_3^-)\) are $x_1^-+\ell_1/{2}$ or $x_2^-+\ell_2/{2}$.
Their difference is
\((\ell_1+\ell_2)/2+s_1=L/2\), yielding the family in (ii)
after periodic extension. This proves necessity and
exhausts all possible centers.

Conversely, under (i), reflection about \(x_1^++s_1/2\)
preserves each spacing modulo \(L\) and exchanges the two
resonators, whose lengths and material coefficients agree.
Under (ii), reflection about \(x_1^-+\ell_1/2\) preserves
each resonator modulo \(L\) and exchanges the equal-length
spacings. Hence, both reflections preserve
\(p_\delta,w_\delta\) and $x_c +L/2$ is also a reflection center. 
Therefore, the stated families give all reflection centers.
If both conditions hold, or equivalently, if (S) holds, both
families occur.
\end{proof}
\begin{remark}
Let \(k_0\in E\), so that \(I(k_0)\ne\varnothing\).
Under condition (i) of Proposition~\ref{prop:reflection N=2},
the resonator phases coincide, hence
\(1\in I(k_0)\) if and only if \(3\in I(k_0)\).
Thus, configurations containing exactly one of \(1,3\)
are excluded, and
\[
I(k_0)\in
\bigl\{\{2\},\{4\},\{2,4\},\{1,3\},
\{1,2,3\},\{1,3,4\},\{1,2,3,4\}\bigr\}.
\]
Under condition (ii), the spacing phases coincide, hence
\(2\in I(k_0)\) if and only if \(4\in I(k_0)\).
Thus, configurations containing exactly one of \(2,4\)
are excluded, and
\[
I(k_0)\in
\bigl\{\{1\},\{3\},\{1,3\},\{2,4\},
\{1,2,4\},\{2,3,4\},\{1,2,3,4\}\bigr\}.
\]
In particular, under (S), only
\(I(k_0)=\{1,3\}\), \(\{2,4\}\), or \(\{1,2,3,4\}\)
can occur.
\end{remark}

Combining the configurations described above with the
eigenmode expansions in Appendix~\ref{app:asymptotics N2} determines
the parity of the endpoint profiles and yields the following
characterization of the Zak phase.

\begin{theorem}[Zak phases for \(N=2\)]
\label{thm: Zak phases N2}
Let \(N=2\), fix the geometric and material parameters,
and let \(k_0\in E\). Assume that at least one of the
following reflection symmetry conditions holds:
\begin{enumerate}
\item[(i)]
\(r_1=r_2=r\) and \(\ell_1=\ell_2=\ell\), with
\(x_c=x_1^++s_1/2\);
\item[(ii)]
\(s_1=s_2=s\), with \(x_c=x_1^-+\ell_1/2\).
\end{enumerate}
Let \(Y=[x_c-L/2,x_c+L/2]\), and let
\(\varphi_{\rm Zak}\) denote the quasiperiodic Zak phase
defined on \(Y\).
For the local Bloch bands in Appendix~\ref{app:asymptotics N2}
that are simple and isolated throughout the Brillouin zone,
the following statements hold for sufficiently small
\(\delta>0\), with all comparisons strict and all Zak phases
taken modulo \(2\pi\).

\begin{enumerate}
\item[(a)]
Suppose \(k_0\ne0\) and
\(I(k_0)\ne\{1,2,3,4\}\).
For the two branches
\(k_\pm(\alpha;\delta)
=k_0+\lambda_\pm(\alpha)\delta+\mc O(\delta^2)\),
the terms ``lower'' and ``upper'' refer to the ordering
\(\lambda_-<\lambda_+\).

Under symmetry (i), the Zak phases are given by
\begin{center}
\begin{tabular}{lll}
\toprule
\(I(k_0)\) & Condition & Zak phases \\
\midrule
\(\{2\}\) & --- & \(0\) \\
\(\{4\}\) & --- & \(\pi\) \\
\(\{1,3\}\)
& \(|\sin(s_1k_0)|<|\sin(s_2k_0)|\)
& Both \(0\) \\
\(\{1,3\}\)
& \(|\sin(s_1k_0)|>|\sin(s_2k_0)|\)
& Both \(\pi\) \\
\(\{2,4\}\)
& \(\cot(r\ell k_0)/s_1<\cot(r\ell k_0)/s_2\)
& Lower \(0\), upper \(\pi\) \\
\(\{2,4\}\)
& \(\cot(r\ell k_0)/s_1>\cot(r\ell k_0)/s_2\)
& Lower \(\pi\), upper \(0\) \\
\(\{1,2,3\}\) & --- & All three \(0\) \\
\(\{1,3,4\}\) & --- & All three \(\pi\) \\
\bottomrule
\end{tabular}
\end{center}

Under symmetry (ii), they are given by
\begin{center}
\begin{tabular}{lll}
\toprule
\(I(k_0)\) & Condition & Zak phases \\
\midrule
\(\{1\}\) & --- & \(0\) \\
\(\{3\}\) & --- & \(\pi\) \\
\(\{1,3\}\)
& \(\cot(sk_0)/(r_1^2\ell_1)
   <\cot(sk_0)/(r_2^2\ell_2)\)
& Lower \(0\), upper \(\pi\) \\
\(\{1,3\}\)
& \(\cot(sk_0)/(r_1^2\ell_1)
   >\cot(sk_0)/(r_2^2\ell_2)\)
& Lower \(\pi\), upper \(0\) \\
\(\{2,4\}\)
& \(r_1|\sin(r_1\ell_1k_0)|
   <r_2|\sin(r_2\ell_2k_0)|\)
& Both \(0\) \\
\(\{2,4\}\)
& \(r_1|\sin(r_1\ell_1k_0)|
   >r_2|\sin(r_2\ell_2k_0)|\)
& Both \(\pi\) \\
\(\{1,2,4\}\) & --- & All three \(0\) \\
\(\{2,3,4\}\) & --- & All three \(\pi\) \\
\bottomrule
\end{tabular}
\end{center}

\item[(b)]
Suppose \(k_0\ne0\) and \(I(k_0)=\{1,2,3,4\}\).
The two inner branches meet at the exact Dirac
wavenumber \(k_0\) and are excluded from the single-band
Zak phase considered here.
The two outer branches, associated with the larger
effective eigenvalue, have the same Zak phase:
\[
\begin{array}{c|cc}
& \varphi_{\rm Zak}=0 & \varphi_{\rm Zak}=\pi \\ \hline
\text{Symmetry (i)} & s_1<s_2 & s_1>s_2 \\[2pt]
\text{Symmetry (ii)}
& r_1^2\ell_1<r_2^2\ell_2
& r_1^2\ell_1>r_2^2\ell_2
\end{array}
\]

\item[(c)]
Suppose \(k_0=0\), and order the two subwavelength
bands by their eigenvalues \(k^2\).
Under symmetry (i), both Zak phases are \(0\) if
\(s_1<s_2\), and both are \(\pi\) if \(s_1>s_2\).
Under symmetry (ii), the lower and upper bands have
Zak phases \(0,\pi\), respectively, if
\(r_1^2\ell_1>r_2^2\ell_2\), and \(\pi,0\), respectively,
if \(r_1^2\ell_1<r_2^2\ell_2\).

\item[(d)]
The same results hold for the periodic-part convention
with spatial origin at the specified \(x_c\).
Replacing the quasiperiodic integration cell by the cell
centered at \(x_c+L/2\), or moving the periodic-part
spatial origin to \(x_c+L/2\), adds \(\pi\) to every
listed phase modulo \(2\pi\).
\end{enumerate}
\end{theorem}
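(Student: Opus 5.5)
Every part reduces to Lemma~\ref{lem:parity asymptotics}. For each listed band and each endpoint $\alpha_*\in\{0,\pi/L\}$, we need a nonzero leading profile $u_0^{\alpha_*}$, reached uniformly after normalization. We then read off its parity $\xi_{\alpha_*}$ under $\mathcal R f(x)=f(2x_c-x)$. Proposition~\ref{prop:zak}(i) converts the product $\xi_0\xi_{\pi/L}$ into $\varphi_{\rm Zak}$, and also into $\varphi_{\rm per}$ with origin $x_c$. Part (d) is then immediate from Proposition~\ref{prop:zak}(ii). So the work is case bookkeeping over the admissible sets $I(k_0)$ listed in the remark after Proposition~\ref{prop:reflection N=2}.

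The leading profiles come from two sources. For (a) I will use the Appendix~\ref{app:asymptotics N2} expansions. For (b) I will use Theorem~\ref{thm: eigenmode n=2N}. For (c) I will use Proposition~\ref{prop:lowmodes}.

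The mode structure is as follows. On a resonant resonator the mode is approximately $\cos(r_j k_0(x-x_j^-))$ times an amplitude. On a resonant spacing it is approximately $\sin(k_0(x-x_j^+))$ times an amplitude. Nonresonant intervals carry subleading or interpolating pieces. For each of these profiles I compute its parity about the relevant midpoint, and this parity is determined by the integers $m_j$.

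\textbf{Case (i), $x_c$ at a spacing midpoint.} Here $\mathcal R$ maps spacing $s_1$ to itself. It maps $s_2$ to itself modulo $L$, where quasiperiodicity inserts a factor $\e^{\i\alpha L}$. It exchanges the two resonators. Hence:
\begin{itemize}
\item for a mode supported mainly on spacing $j$, the parity is $(-1)^{m_{2j}+1}$ about the midpoint, times $1$ or $\e^{\i\alpha_* L}$;
\item for resonator-supported modes, the parity compares the endpoint amplitude vectors $(a_1,a_2)$ of the $2\times2$ reduced eigenproblem under the swap.
\end{itemize}
When only one of the two spacings is resonant, the endpoint parities differ only through the factor $\e^{\i\alpha_*L}$. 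This should give $0$ when $I=\{2\}$ and $\pi$ when $I=\{4\}$. The opposite outcome for $\{4\}$ comes from the centre lying in $s_1$ rather than $s_2$.

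\textbf{Coupled cases.} For $\{1,3\}$ and $\{2,4\}$, the reduced $2\times2$ matrix has eigenvectors symmetric or antisymmetric under the swap at $\alpha=0,\pi/L$. The ordering conditions in the table decide which eigenvector belongs to the lower branch. These are the comparisons $|\sin(s_1k_0)|$ versus $|\sin(s_2k_0)|$, $\cot(r\ell k_0)/s_j$, and so on. Symmetry (ii) is handled the same way with the roles of resonators and spacings exchanged.

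\textbf{Case (b).} I use that $\mathcal C^\gamma(k_0)$ for $N=2$ has explicit eigenvectors at $\e^{\i\gamma L}=\pm1$. The outer branches carry the larger eigenvalue. Their $\bm a,\bm b$ parity is fixed by the sign of $s_1-s_2$ under (i), or of $r_1^2\ell_1-r_2^2\ell_2$ under (ii).

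\textbf{Main obstacle.} The hardest step is verifying the uniform convergence hypothesis of Lemma~\ref{lem:parity asymptotics}. The profiles have different scalings: $\delta^{1/2}$ on resonators versus $O(1)$ on spacings in Theorem~\ref{thm: eigenmode n=2N}, and similar splits in the appendix. I therefore normalize by the dominant piece so that the limit is nonzero; for example, for spacing-dominated modes the resonator part is $o(1)$. When the limiting profile vanishes on the dominant component, I would instead rescale and apply the lemma's argument to the subdominant component alone. This works because $\mathcal R$ preserves resonators and spacings separately.

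A second delicate point is simplicity at the endpoints. Where the reduced eigenvalue is degenerate at $\alpha_*$, the band is not isolated and is excluded by hypothesis. I would state this exclusion explicitly, and check that the strict inequalities in the tables exactly rule out such degeneracies.
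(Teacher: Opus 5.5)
Your proposal is correct and follows the paper's proof. Both arguments use Proposition~\ref{prop:zak} and Lemma~\ref{lem:parity asymptotics} to reduce everything to the endpoint reflection parities of the nonzero leading profiles from Appendix~\ref{app:asymptotics N2}, Theorem~\ref{thm: eigenmode n=2N} and Proposition~\ref{prop:lowmodes}, with the strict inequalities excluding repeated effective eigenvalues at $\alpha=0,\pi/L$ and (d) taken from Proposition~\ref{prop:zak}(ii). One small correction to your bookkeeping sketch: the swap-symmetric/antisymmetric picture applies only to $\{1,3\}$ under (i) and $\{2,4\}$ under (ii), whereas for $\{2,4\}$ under (i) and $\{1,3\}$ under (ii) the reflection maps each resonant interval to itself (modulo $L$), so both branches share the same parity at the endpoint where $\e^{\i\gamma L}=1$, the effective matrix is diagonal at the endpoint where $\e^{\i\gamma L}=-1$, and the table condition decides which single-interval eigenvector is the lower branch.
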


\begin{proof}
By Proposition~\ref{prop:zak}, the Zak
phase of a simple isolated band satisfies
$\e^{\i\varphi_{\rm Zak}}=\xi_0\xi_{\pi/L}$,
where $\xi_0$ and $\xi_{\pi/L}$ are its endpoint
reflection eigenvalues.

The leading eigenmode profiles are given in
Propositions~\ref{prop:app N2 single},
\ref{prop:app N2 opposite},
\ref{prop:app N2 triple}, and~\ref{prop:lowmodes},
and Theorem~\ref{thm: eigenmode n=2N}.
At $\alpha=0,\pi/L$, their parities under $\mathcal R$
are obtained using
$\cos(m\pi-z)=(-1)^m\cos z$,
$\sin(m\pi-z)=(-1)^{m+1}\sin z,m\in\Z$, and
$\sigma=\e^{\i\alpha L}\in\{1,-1\}$ whenever reflection
maps a profile to a neighboring cell.
For profiles involving effective eigenvectors,
the stated strict comparisons exclude repeated
effective eigenvalues at the endpoints.
The corresponding effective matrices therefore
determine the eigenvectors up to a nonzero scalar
factor, as well as their eigenvalue ordering.

These calculations give endpoint parity product $1$
for each entry listed with Zak phase $0$ and $-1$
for each entry listed with Zak phase $\pi$.
Since the leading profiles are nonzero and the
remainders converge uniformly,
Lemma~\ref{lem:parity asymptotics} shows that
their parities equal the exact reflection eigenvalues
for sufficiently small $\delta$.
The endpoint formula therefore proves (a)--(c).
The inner branches in (b) meet at the exact Dirac
wavenumber $k_0$ and are excluded from the
single-band Zak phase. Statement (d) follows directly from
Proposition~\ref{prop:zak}.
\end{proof}

\section{Concluding remarks}\label{sec:conclusion}
In this paper, we have shown that the periodic one-dimensional
problem admits a common description of subwavelength and
nonzero Fabry--P\'erot Bloch resonances through an exact
transfer matrix and a frequency-dependent capacitance
matrix. In fact, the cyclic matching structure identifies
the leading square-root splittings, including their
multiplicities. It also shows why the leading coefficients
lose their quasimomentum dependence when the resonant cycle
is broken by a nonresonant interval. However, exact bands
can acquire dispersion at higher orders. The local resonance analysis also extends to small complex
contrast by analytic continuation.

We have illustrated how the one-dimensional exterior intervals play a key role. We have shown that its bounded spacing components
contribute Dirichlet frequencies to the limiting set
\(E\), and their coincidence with interior Neumann
frequencies produces a singular coupling. This mechanism
complements the regular frequency-dependent reduction used
away from the exterior poles. Uniform spectral exclusion
on compact sets outside the limiting set \(E\) yields
a direct bandgap statement, whereas the eigenmode formulas
at full resonance exhibit the different amplitudes in
resonators and spacings.

A complete classification of all Dirac points near each
limiting wavenumber has been obtained for \(N=2\), including
both points fixed at the reference wavenumber and
contrast-dependent crossings. This classification is based on an exact Dirac criterion, which distinguishes exact crossings from coincidences of leading asymptotic coefficients.
For completely symmetric arrays, a complete classification
is also given for arbitrary \(N\), with band folding
clarifying the role of a smaller primitive period.
The zero-frequency constant mode does not constitute
a Dirac point. The weighted Zak phase has also been constructed for simple
isolated bands, and its quantization under reflection
symmetry has been proved. Explicit resonance and eigenmode
expansions for all nonempty resonant configurations when
\(N=2\) are provided in Appendix~\ref{app:asymptotics N2},
under the stated simplicity assumptions.

Further analysis is needed to describe eigenmodes for general \(N\) and arbitrary proper resonant subsets,
to compute higher-order dispersion of the \(\O(\delta)\)
branches, and to make the reductions uniform near changes
in band multiplicity. Such estimates would also provide
a basis for comparing continuum and reduced Zak phases,
studying interfaces, and quantifying the opening of gaps
when the smaller-period symmetry is broken.
A separate high-frequency analysis would be required to
control the constants as the reference wavenumber tends
to infinity. These interesting problems will be the subject
of future work.

\section*{Acknowledgments}
This work was partially supported by the National Key R\&D Program of China grant number 2024YFA1016000 and the Fundamental Research Funds for the Central Universities grant number 226-2025-00192.

\section*{Data Availability Statement}
The figure-generation and verification script is supplied with the LaTeX source. No experimental data were used.

\section*{Conflict of interest} 
The authors have no conflicts of interest to declare. 

\appendix 

\section{Resonance and eigenmode asymptotics for
\texorpdfstring{$N=2$}{N=2}}
\label{app:asymptotics N2}
For general \(N\), Theorem~\ref{thm: k(delta) expand}
determines the numbers of local resonances at the
\(\delta^{1/2}\) and \(\delta\) scales, counted with
multiplicity, and uses \(\mathcal C^\gamma(k_0)\) to describe
the former, but does not give explicit expansions for
the latter.
Likewise, Theorem~\ref{thm: eigenmode n=2N} gives eigenmode
asymptotics only when \(n(k_0)=2N\).
For \(N=2\), the smaller system allows explicit calculations
for each configuration of \(I(k_0)\).
To complement the general results
and facilitate the analysis of Dirac points and Zak phases
in Sections~\ref{sec:dirac} and~\ref{sec: Zak phase}, we give explicit expansions for the remaining
cases. Completeness throughout follows from
Theorem~\ref{thm: k(delta) expand}. The cases considered here
cover all nonempty configurations of \(I(k_0)\), subject to
the stated simplicity assumptions. For a simple isolated band,
Proposition~\ref{prop:zak} determines the Zak phase from the
endpoint reflection eigenvalues, which can be identified from
nonzero leading profiles of definite parity with controlled
remainders.

 We use $I(k_0)$ and $m_j$ as defined in
(\ref{equ: I def}) and (\ref{equ:defmj}), respectively. Throughout, the geometric and material parameters and the real Bloch
parameter $\alpha$ are fixed, and $\delta\to0$. Put
\[
\e^{\i\gamma L}=(-1)^{\sum_{j\in I(k_0)}m_j}\e^{\i\alpha L},
\qquad
\theta_1=\frac{1}{r_1^2\ell_1s_1},\quad
\theta_2=\frac{1}{r_2^2\ell_2s_1},\quad
\theta_3=\frac{1}{r_2^2\ell_2s_2},\quad
\theta_4=\frac{1}{r_1^2\ell_1s_2}.
\]
The eigenmodes are suitably normalized, with uniform field
remainders on the closed period cell.
Expansions involving effective eigenvectors are restricted
to simple effective eigenvalues. Repeated effective eigenvalues
require higher-order analysis and are not considered here.

\subsection{Cyclic relabelling and exact reconstruction}
\label{app:N2 reconstruction}

It suffices to give representatives of the configurations under a
cyclic exchange of the two resonators. This exchange sends
$(1,2,3,4)$ to $(3,4,1,2)$ and interchanges
$(r_1,\ell_1,s_1)$ with $(r_2,\ell_2,s_2)$.
To apply a representative formula, start the cell at $x_2^-$ and
extend the resulting field by $u(x+L)=\e^{\i\alpha L}u(x)$.
This rule specifies the omitted configurations without changing the
Bloch multiplier.

For completeness, any exact eigenmode can be reconstructed from its
initial data, including at exceptional parameter values. Let $a_j=u(x_j^-)$ and $b_j=u'_-(x_j^-)$, where the derivative is
on the spacing side. On the $j$\textsuperscript{th} resonator,
\begin{equation}\label{eq:app N2 exact resonator}
u(x)=a_j\cos(r_jk(x-x_j^-))
+\frac{\delta b_j}{r_jk}\sin(r_jk(x-x_j^-)).
\end{equation}
Write $c_j=u(x_j^+)$ and $d_j=u'_+(x_j^+)$. Then
\[
c_j=a_j\cos(r_j\ell_jk)
+\frac{\delta b_j}{r_jk}\sin(r_j\ell_jk),\qquad
d_j=-\frac{r_jk}{\delta}a_j\sin(r_j\ell_jk)
+b_j\cos(r_j\ell_jk).
\]
On the following spacing,
\begin{equation}\label{eq:app N2 exact spacing}
u(x)=c_j\cos(k(x-x_j^+))
+\frac{d_j}{k}\sin(k(x-x_j^+)),
\end{equation}
and
\[
a_{j+1}=c_j\cos(ks_j)+\frac{d_j}{k}\sin(ks_j),\qquad
b_{j+1}=-kc_j\sin(ks_j)+d_j\cos(ks_j).
\]
The Bloch condition is $(a_3,b_3)^\top=\e^{\i\alpha L}(a_1,b_1)^\top$.
At $k=0$, the formulas are interpreted by continuity.
These relations follow directly from the interval equations and
transmission conditions and also fix all phase factors in the
expansions below. The endpoint notation in this subsection is local and is
independent of the effective eigenvector notation used below.

\subsection{A single resonant phase}

\begin{proposition}\label{prop:app N2 single}
Suppose \(k_0\ne0\) and \(|I(k_0)|=1\).
By Theorem~\ref{thm: k(delta) expand}, there is exactly
one local resonance at the \(\delta\) scale.
Its resonance and eigenmode expansions are as follows.
\begin{enumerate}
\item[(i)] If $I(k_0)=\{1\}$, then
\[
k(\delta)=k_0+ \underbrace{
\frac{\cot(s_1k_0)+\cot(s_2k_0)}{r_1^2\ell_1}}_{:=\mu_1} \delta
+\mc O(\delta^2),
\]
and an associated eigenmode satisfies
\[
u_\delta(x)=
\begin{cases}
\cos(r_1k_0(x-x_1^-)),&x\in[x_1^-,x_1^+],\\
(-1)^{m_1}\dfrac{\sin(k_0(x_2^--x))}{\sin(s_1k_0)},
&x\in[x_1^+,x_2^-],\\
0,&x\in[x_2^-,x_2^+],\\
\e^{\i\alpha L}\dfrac{\sin(k_0(x-x_2^+))}{\sin(s_2k_0)},
&x\in[x_2^+,x_3^-]
\end{cases}
+\mc O(\delta).
\]
\item[(ii)] If $I(k_0)=\{4\}$, then
\[
k(\delta)=k_0+
\frac{1}{s_2}\left[
\frac{\cot(r_1\ell_1k_0)}{r_1}
+\frac{\cot(r_2\ell_2k_0)}{r_2}\right]\delta
+\mc O(\delta^2),
\]
and
\[
u_\delta(x)=
\begin{cases}
\sin(k_0(x-x_2^+)),&x\in[x_2^+,x_3^-],\\
0,&\text{otherwise}
\end{cases}
+\mc O(\delta).
\]
The cases $I(k_0)=\{3\}$ and $\{2\}$ follow from (i) and (ii),
respectively, by cyclic relabelling.
\end{enumerate}
\end{proposition}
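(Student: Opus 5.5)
We use the exact reconstruction of Subsection~\ref{app:N2 reconstruction} together with the count from Theorem~\ref{thm: k(delta) expand}. Since $|I(k_0)|=1$, we have $n=1$ and $m=0$, so exactly one zero of $g(\cdot;\delta)$ lies near $k_0$. Because $n=1$, this zero is simple at $(k,\delta)=(k_0,0)$, and the implicit function theorem makes $k(\delta)$ analytic with $k(\delta)-k_0=\mc O(\delta)$. It therefore remains to identify the first-order coefficient and the leading profile.

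\emph{Case (i), $I(k_0)=\{1\}$.} We normalize $a_1=1$ and set $k=k_0+\mu\delta+\mc O(\delta^2)$, leaving $b_1$ unknown. We first show that the natural scale is $b_1=\mc O(1)$. With this choice,
\[
c_1=(-1)^{m_1}+\mc O(\delta),\qquad d_1=-(-1)^{m_1}r_1^2\ell_1k_0\mu+(-1)^{m_1}b_1+\mc O(\delta).
\]
The key observation is that $\sin(r_2\ell_2k_0)\ne0$ forces the data entering resonator $2$ to satisfy $a_2=\mc O(\delta)$. Otherwise $d_2\sim -(r_2k/\delta)a_2\sin(r_2\ell_2k)$ would be of order $\delta^{-1}$, whereas the Bloch condition requires $b_3=\e^{\i\alpha L}b_1=\mc O(1)$ and the outgoing spacing has $\sin(s_2k_0)\ne0$.

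Imposing $a_2=\mc O(\delta)$ gives at leading order
\[
d_1\sin(s_1k_0)/k_0=-(-1)^{m_1}\cos(s_1k_0),
\]
which determines $d_1$. We then propagate across resonator $2$, writing $a_2=\delta\hat a_2$, and across spacing $2$. Requiring $a_3=\e^{\i\alpha L}$ and $b_3=\e^{\i\alpha L}b_1$ yields linear equations for $\hat a_2$ and $b_1$, and consistency then fixes $\mu$.

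We expect this bookkeeping to be the main obstacle, because the signs $(-1)^{m_1}$ and the phase $\e^{\i\alpha L}$ must cancel so that $\mu$ is $\alpha$-independent. We would organize it as follows. First, $a_3=\e^{\i\alpha L}$ together with $a_2=\mc O(\delta)$ gives the spacing-$2$ field $\e^{\i\alpha L}\sin(k_0(x-x_2^+))/\sin(s_2k_0)$, whose derivative at $x_3^-$ determines $b_3$ and hence $b_1$. Substituting back into the expression for $d_1$ then gives
\[
r_1^2\ell_1\mu=\cot(s_1k_0)+\cot(s_2k_0).
\]
A cross-check is available: this agrees with the $\mc O(\delta)$ coefficient obtained by Taylor expanding $g$ with a single selected interface on each side of interval $1$, via the $\eta$-calculus of Proposition~\ref{thm: tr_G_order}.

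With $\mu$ in hand, the profile follows directly from the reconstruction formulas. On resonator $1$, $u=\cos(r_1k_0(x-x_1^-))+\mc O(\delta)$, since the $\sin$ term carries the factor $\delta b_1$. On spacing $1$, the field is the unique combination with value $(-1)^{m_1}$ at $x_1^+$ and zero at $x_2^-$, namely
\[
(-1)^{m_1}\sin(k_0(x_2^--x))/\sin(s_1k_0).
\]
Resonator $2$ carries $\mc O(\delta)$, and spacing $2$ is as computed above. All remainders are uniform because the coefficients are bounded analytic functions of $\delta$ on compact intervals, and $k(\delta)-k_0=\mc O(\delta)$ may be replaced by $k_0$ inside the trigonometric functions at $\mc O(\delta)$ cost.

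\emph{Case (ii), $I(k_0)=\{4\}$.} This is dual, with the roles of $\cos$ and $\sin$ exchanged. Now only spacing $2$ is resonant, and we normalize the spacing-$2$ derivative at $x_2^+$ to $k_0$. Both resonators are nonresonant, so their boundary values must be $\mc O(\delta)$; otherwise the $r_j k/\delta$ terms blow up. Spacing $1$ then has both endpoint values of order $\delta$ with $\sin(s_1k_0)\ne0$, so its field is $\mc O(\delta)$ as well. Matching the $\mc O(\delta)$ amplitudes at $x_2^+$ and at $x_3^-\equiv x_1^-$ through the two resonator transfer relations, each contributing $\cot(r_j\ell_jk_0)/r_j$ via $d_j/(r_jk/\delta)$, gives
\[
s_2\mu=\frac{\cot(r_1\ell_1k_0)}{r_1}+\frac{\cot(r_2\ell_2k_0)}{r_2},
\]
where the factor $s_2$ arises from expanding $\sin(s_2k)$ about its zero. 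The profile is $\sin(k_0(x-x_2^+))$ on spacing $2$ and $\mc O(\delta)$ elsewhere. Finally, the cases $\{3\}$ and $\{2\}$ follow by the cyclic relabelling of Subsection~\ref{app:N2 reconstruction}.
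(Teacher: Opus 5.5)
Your argument is correct. For the eigenmode profiles it is the paper's argument: normalize at $x_1^-$ (resp.\ the spacing-2 derivative at $x_2^+$), use nonresonance of the other intervals to force their fields to be $\mathcal O(\delta)$, and propagate.

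You differ from the paper in how you obtain $\mu$. The paper substitutes $k=k_0+\delta z$ into the rescaled dispersion equation. At $\delta=0$ it reads off the linear equations $r_1^2\ell_1z=\cot(s_1k_0)+\cot(s_2k_0)$ and $s_2z=\cot(r_1\ell_1k_0)/r_1+\cot(r_2\ell_2k_0)/r_2$, applies the implicit function theorem there, and only afterwards uses the mode. You instead apply the implicit function theorem to $g$ itself, which is valid because $n=1$ gives $\partial_kg(k_0;0)\ne0$. You then obtain $\mu$ as the solvability condition of the leading-order interface matching, keeping the dispersion expansion only as a cross-check. That cross-check does work: the two $\nu_1$-interfaces adjacent to resonator $1$ contribute $\cos(s_1k_0)$ and $\cos(s_2k_0)$ terms, and their sum gives exactly $\cot(s_1k_0)+\cot(s_2k_0)$. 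Your route yields coefficient and profile in one pass and makes the $\alpha$-independence of $\mu$ transparent. The paper's route gets $\mu$ without any control of the eigenvector.

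That last point is where your write-up needs tightening, because in your route the a priori bounds carry the weight. You announce that $b_1=\mathcal O(1)$ under $a_1=1$ will be shown, but you never show it. Your argument for $a_2=\mathcal O(\delta)$ then invokes $b_3=\e^{\i\alpha L}b_1=\mathcal O(1)$, which presupposes it.

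The fix is short. Normalize $|a_1|+|b_1|=1$ instead. By the spacing-2 reconstruction formulas, $(c_2,d_2)$ is obtained from $(a_3,b_3)=\e^{\i\alpha L}(a_1,b_1)$ by a bounded invertible map (in the $(u,u')$ variables, $k$ near $k_0\ne0$), so it is bounded. Then $\sin(r_2\ell_2k_0)\ne0$ gives $a_2=\mathcal O(\delta)$ without circularity. Since
\[
a_2=(-1)^{m_1}\bigl[(\cos(s_1k_0)-r_1^2\ell_1\mu\sin(s_1k_0))a_1+\sin(s_1k_0)b_1/k_0\bigr]+\mathcal O(\delta)
\]
and $\sin(s_1k_0)\ne0$, $a_1$ stays bounded away from zero. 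Rescaling to $a_1=1$ therefore keeps $b_1$ bounded.

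Case (ii) works the same way. With the same normalization, $c_1$ (forward from $x_1^-$) and $a_2$ (backward from $x_3^-$) are bounded. It is the nonresonance of spacing $1$ that then bounds $d_1$ and $b_2$. Only after that does nonresonance of the resonators give $a_1,c_2=\mathcal O(\delta)$, so $|b_1|$, and hence $|d_2|$, is bounded below. This justifies your normalization of the spacing-2 derivative; your phrase ``otherwise the $r_jk/\delta$ terms blow up'' silently relies on this step. The paper asserts these bounds just as briefly, but it does not need them to determine $\mu$.
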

\begin{proof}
Substituting \(k=k_0+\delta z\) into the rescaled dispersion
equation gives, at \(\delta=0\), the linear equations
\[
r_1^2\ell_1z=\cot(s_1k_0)+\cot(s_2k_0),
\qquad
s_2z=\frac{\cot(r_1\ell_1k_0)}{r_1}
+\frac{\cot(r_2\ell_2k_0)}{r_2},
\]
for (i) and (ii), respectively.
Their roots are simple, so the implicit function theorem
gives the stated resonance expansions.

As for (i), we normalize \(u_\delta(x_1^-)=1\).
Since all other phase sines are nonzero, the nonresonant
endpoint equations are invertible, giving bounded exterior
derivatives and \(u_\delta=\mc O(\delta)\) on the second
resonator. Exact propagation across the first resonator gives
\[
u_\delta(x_1^+)=(-1)^{m_1}+\mc O(\delta^2),
\qquad
(-1)^{m_1}u'_{\delta,+}(x_1^+)
-u'_{\delta,-}(x_1^-)
=-r_1^2\ell_1k_0\mu_1+\mc O(\delta).
\]
The spacing solutions with these endpoint values give the
displayed sine interpolants, while the first resonator
has the displayed cosine profile, all with uniform
\(\mc O(\delta)\) errors.

As for (ii), we normalize \(u'_{\delta,+}(x_2^+)=k\).
Invertibility of the nonresonant endpoint equations gives
fields of order \(\delta\) on the other three intervals.
The resonator propagation and Bloch conditions yield
\[
u_\delta(x_2^+)
=-\frac{\delta}{r_2}\cot(r_2\ell_2k_0)+\mc O(\delta^2),
\qquad
u_\delta(x_3^-)
=(-1)^{m_4}\frac{\delta}{r_1}
\cot(r_1\ell_1k_0)+\mc O(\delta^2).
\]
The exact solution on the second spacing is
\[
u_\delta(x)=u_\delta(x_2^+)\cos(k(x-x_2^+))
+\sin(k(x-x_2^+)),
\]
which gives the stated sine profile with a uniform
\(\mc O(\delta)\) remainder.
\end{proof}

\subsection{Two adjacent resonant phases}

\begin{proposition}\label{prop:app N2 adjacent}
Suppose \(k_0\ne0\) and \(I(k_0)\) consists of two
adjacent indices.
By Theorem~\ref{thm: k(delta) expand}, there are exactly
two local resonances at the \(\delta^{1/2}\) scale
and none at the \(\delta\) scale.
If $I(k_0)=\{1,2\}$, they satisfy
\[
k^\pm(\delta)=k_0\pm\sqrt{\theta_1\delta}+\mc O(\delta),
\]
whereas for $I(k_0)=\{2,3\}$,
\[
k^\pm(\delta)=k_0\pm\sqrt{\theta_2\delta}+\mc O(\delta).
\]
In both cases, each branch has an eigenmode of the form
\[
u_\delta^\pm(x)=
\begin{cases}
\sin(k_0(x-x_1^+)),&x\in[x_1^+,x_2^-],\\
0,&\text{otherwise}
\end{cases}
+\mc O(\delta^{1/2}).
\]
The cases $\{3,4\}$ and $\{4,1\}$ follow by cyclic relabelling.
\end{proposition}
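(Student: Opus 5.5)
The resonance expansions come from Theorem~\ref{thm: k(delta) expand}. The eigenmodes come from the exact reconstruction formulas \eqref{eq:app N2 exact resonator}--\eqref{eq:app N2 exact spacing}.

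\textbf{Resonances.} For $I(k_0)=\{1,2\}$ we have $n=2$ and one cyclic component of length two, so $m=1$. Since $n<2N=4$, Lemma~\ref{lem: spec of capacitance matrix} gives $M=m=1$. The only edge inside $I$ is $\{1,2\}$, so $C^{\rm per}_{I,1}=\theta_1$. Hence the single positive eigenvalue of $\mathcal C^\gamma(k_0)$ is $\theta_1=1/(r_1^2\ell_1s_1)$. The count $n-2M=0$ shows there is no $\delta$-scale branch, and Theorem~\ref{thm: k(delta) expand} yields $k^\pm=k_0\pm\sqrt{\theta_1\delta}+\mathcal O(\delta)$. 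The case $\{2,3\}$ is identical with edge weight $\theta_2=1/(t_2t_3)=1/(r_2^2\ell_2 s_1)$.

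\textbf{Eigenmodes for $I(k_0)=\{1,2\}$.} Normalize so that $d_1=u'_+(x_1^+)=k$; this makes the spacing-one amplitude $1$. Along the branch, $\sin(s_1k)=\pm(-1)^{m_2}s_1\sqrt{\theta_1\delta}+\mathcal O(\delta)$, and likewise $\sin(r_1\ell_1 k)=\mathcal O(\delta^{1/2})$. The resonance condition relates $c_1=u(x_1^+)$ to $d_1$ through the coupling of the two resonant intervals.

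I would carry out this step as a $2\times2$ matching, as in Step~1 of Theorem~\ref{thm: eigenmode n=2N}, but now with nonresonant intervals present. On the nonresonant intervals (resonator $2$ and spacing $2$), the endpoint relations are invertible. Using the Bloch condition $(a_3,b_3)=\e^{\i\alpha L}(a_1,b_1)$, I would express the data on $D_2\cup(x_2^+,x_3^-)$ and $(a_1,b_1)$ linearly in terms of $(c_1,d_1)$ and $(a_2,b_2)$.

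The transmission conditions force $a_2=\mathcal O(\delta)$ relative to $b_2$ through the nonresonant second resonator. Indeed, its interior derivative is $\delta^{-1}$-large unless the field is small there, since $d_2\sim -(r_2k/\delta)a_2\sin(r_2\ell_2k)$. The same mechanism makes $a_1$ small, of order $\mathcal O(\delta^{1/2})$.

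Then \eqref{eq:app N2 exact spacing} on $[x_1^+,x_2^-]$ reads $u=c_1\cos(k(x-x_1^+))+\sin(k(x-x_1^+))$ with $c_1=\mathcal O(\delta^{1/2})$. Replacing $k$ by $k_0$ costs $\mathcal O(\delta^{1/2})$, which gives the stated sine profile. On the other intervals, \eqref{eq:app N2 exact resonator} gives fields of size $\mathcal O(\delta^{1/2})$ or smaller. The case $\{2,3\}$ is analogous: spacing one is still the resonant spacing, and the resonator adjacent to it is now $D_2$.

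\textbf{Main obstacle.} The hard part is proving the orders $a_1=\mathcal O(\delta^{1/2})$ and $c_1=\mathcal O(\delta^{1/2})$, uniformly, without circularity. I would first try to bound the whole data vector after normalization, following the argument of \eqref{equ: a(delta) b(delta) bound}. I would then read off the smallness from the resonator equation $d_1=-(r_1k/\delta)a_1\sin(r_1\ell_1k)+b_1\cos(r_1\ell_1k)$. In this equation $\sin(r_1\ell_1k)/\delta\sim\delta^{-1/2}$ and $b_1$ is bounded by the nonresonant inversion.

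Finally, the cases $\{3,4\}$ and $\{4,1\}$ follow from the cyclic relabelling of Appendix~\ref{app:N2 reconstruction}.
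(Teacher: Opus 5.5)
Your proposal is correct and follows the paper's route. The branches come from Theorem~\ref{thm: k(delta) expand}, using the single positive eigenvalue $\theta_1$ (resp.\ $\theta_2$) of $\mathcal C^\gamma(k_0)$, and the mode comes from the exact reconstruction formulas with $k-k_0=\mathcal{O}(\delta^{1/2})$, which the paper states in one line. Your max-normalization remedy for the circularity works, but you should add the easy final step: in that normalization every interface datum except $d_1$ and $b_2=\pm d_1+\mathcal{O}(\delta^{1/2})$ is $\mathcal{O}(\delta^{1/2})$, so $|d_1|$ is bounded below and rescaling to $d_1=k$ is legitimate.
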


\begin{proof}
The nonzero effective eigenvalue is $\theta_1$ for $\{1,2\}$
and $\theta_2$ for $\{2,3\}$, as follows from
\[
\begin{pmatrix}\theta_1&-\theta_1\\0&0\end{pmatrix},
\qquad
\begin{pmatrix}0&0\\-\theta_2&\theta_2\end{pmatrix},
\]
respectively. Theorem~\ref{thm: k(delta) expand} gives the two stated
branches and excludes additional branches at the
\(\delta\) scale. For these branches,
(\ref{eq:app N2 exact resonator})--(\ref{eq:app N2 exact spacing}),
with $k-k_0=\mc O(\delta^{1/2})$, give a nonzero leading sine
profile on the resonant spacing and fields of order
$\delta^{1/2}$ elsewhere.
\end{proof}

\subsection{Two nonadjacent resonant phases}

We consider \(I(k_0)=\{1,3\}\) and \(\{2,4\}\), for which
the resonance shifts are of order \(\delta\).
The following expansions are stated for simple effective
eigenvalues.

\begin{proposition}\label{prop:app N2 opposite}
Let \(k_0\ne0\).
For each simple eigenvalue \(\lambda\) of the effective
matrix specified below, there exist \(\varepsilon,\delta_0>0\)
such that, for every \(0<\delta<\delta_0\), there is exactly
one resonance satisfying
\[
\left|\frac{k-k_0}{\delta}-\lambda\right|<\varepsilon.
\]
This resonance defines a branch \(k(\delta)\) with $k(\delta)=k_0+\lambda\delta+\mc O(\delta^2)$.
After suitable normalization, its eigenmode has the following
expansion, with a uniform remainder on the period cell.
\begin{enumerate}
\item[(i)]
Suppose \(I(k_0)=\{1,3\}\). Define $\mc L=\operatorname{diag}(r_1^2\ell_1,r_2^2\ell_2)$ and
\[
C=
\begin{pmatrix}
\cot(s_1k_0)+\cot(s_2k_0)
&
-\csc(s_1k_0)-\e^{-\i\gamma L}\csc(s_2k_0)
\\
-\csc(s_1k_0)-\e^{\i\gamma L}\csc(s_2k_0)
&
\cot(s_1k_0)+\cot(s_2k_0)
\end{pmatrix}.
\]
If \((\lambda,(a_1,a_3)^\top)\) is a simple eigenpair
of \(\mc L^{-1}C\), then
\[
u_\delta(x)=
\begin{cases}
a_1\cos(r_1k_0(x-x_1^-)),
&x\in[x_1^-,x_1^+],\\
(-1)^{m_1}
\dfrac{a_1\sin(k_0(x_2^--x))
+a_3\sin(k_0(x-x_1^+))}{\sin(s_1k_0)},
&x\in[x_1^+,x_2^-],\\
(-1)^{m_1}a_3\cos(r_2k_0(x-x_2^-)),
&x\in[x_2^-,x_2^+],\\
\e^{\i\alpha L}
\dfrac{a_3\e^{-\i\gamma L}\sin(k_0(x_3^--x))
+a_1\sin(k_0(x-x_2^+))}{\sin(s_2k_0)},
&x\in[x_2^+,x_3^-]
\end{cases}
+\mc O(\delta),
\]
where
\(\e^{\i\gamma L}=(-1)^{m_1+m_3}\e^{\i\alpha L}\).

\item[(ii)]
Suppose \(I(k_0)=\{2,4\}\). Define $\mc S=\operatorname{diag}(s_1,s_2)$ and
\[
D=
\begin{pmatrix}
\dfrac{\cot(r_1\ell_1k_0)}{r_1}
+\dfrac{\cot(r_2\ell_2k_0)}{r_2}
&
-\e^{-\i\gamma L}\dfrac{\csc(r_1\ell_1k_0)}{r_1}
-\dfrac{\csc(r_2\ell_2k_0)}{r_2}
\\[6pt]
-\e^{\i\gamma L}\dfrac{\csc(r_1\ell_1k_0)}{r_1}
-\dfrac{\csc(r_2\ell_2k_0)}{r_2}
&
\dfrac{\cot(r_1\ell_1k_0)}{r_1}
+\dfrac{\cot(r_2\ell_2k_0)}{r_2}
\end{pmatrix}.
\]
If \((\lambda,(b_2,b_4)^\top)\) is a simple eigenpair
of \(\mc S^{-1}D\), then
\[
u_\delta(x)=
\begin{cases}
b_2\sin(k_0(x-x_1^+)),
&x\in[x_1^+,x_2^-],\\
(-1)^{m_2}b_4\sin(k_0(x-x_2^+)),
&x\in[x_2^+,x_3^-],\\
0,&\text{on the resonators}
\end{cases}
+\mc O(\delta),
\]
where
\(\e^{\i\gamma L}=(-1)^{m_2+m_4}\e^{\i\alpha L}\).
\end{enumerate}
Both effective matrices have real eigenvalues. If their two
eigenvalues are distinct, the corresponding branches exhaust
the resonances near \(k_0\).
\end{proposition}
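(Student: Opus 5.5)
The approach is to substitute $k=k_0+\delta z$ into the exact propagation relations of Appendix~\ref{app:N2 reconstruction}. This reduces the local resonance problem to a $2\times2$ analytic eigenvalue problem whose limit at $\delta=0$ is $\mathcal L^{-1}C-z$ or $\mathcal S^{-1}D-z$. I treat case (i) in full; case (ii) follows by the same argument with the roles of resonators and spacings exchanged.

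\textbf{Reduction to a $2\times2$ system in case (i).} Take $I(k_0)=\{1,3\}$. Use the endpoint values $a_1=u(x_1^-)$ and $a_3:=(-1)^{m_1}u(x_2^-)$ as unknowns. Since $\sin(s_jk_0)\ne0$, each spacing solution is determined by its two endpoint values through the sine interpolant \eqref{eq:app N2 exact spacing}. Its endpoint derivatives are then given by the Dirichlet-to-Neumann relation
\[
u'(x_j^+)=k\bigl(-\cot(ks_j)\,u(x_j^+)+\csc(ks_j)\,u(x_{j+1}^-)\bigr),
\]
together with the analogous relation at $x_{j+1}^-$. Across resonator $j$, propagation \eqref{eq:app N2 exact resonator} at $k=k_0+\delta z$ gives
\[
u(x_j^+)=(-1)^{m}\,u(x_j^-)+\mathcal O(\delta^2),
\]
\[
(-1)^{m}u'_+(x_j^+)-u'_-(x_j^-)=-r_j^2\ell_jk_0 z\,u(x_j^-)+\mathcal O(\delta),
\]
where $m=m_1$ for the first resonator and $m=m_3$ for the second. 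These follow from $\sin(r_j\ell_jk)=(-1)^m r_j\ell_j\delta z+\mathcal O(\delta^2)$ combined with the $1/\delta$ factor in $d_j$.

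Inserting the spacing Dirichlet-to-Neumann formulas and the Bloch condition $(a_3,b_3)=\e^{\i\alpha L}(a_1,b_1)$, and tracking signs through $\e^{\i\gamma L}=(-1)^{m_1+m_3}\e^{\i\alpha L}$, the two jump conditions become
\[
\bigl(C-z\mathcal L+\mathcal O(\delta)\bigr)(a_1,a_3)^\top=0 .
\]
The factor $k_0$ cancels throughout, and the error term is analytic in $(z,\delta)$. So resonances near $k_0$ at scale $\delta$ correspond to zeros of
\[
F(z,\delta)=\det\bigl(C-z\mathcal L+\mathcal O(\delta)\bigr),
\]
and at $\delta=0$ these zeros are the eigenvalues of $\mathcal L^{-1}C$.

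\textbf{Eigenvalues and eigenmodes.} The matrix $C$ is Hermitian, since its off-diagonal entries are conjugate for real $\gamma$, and $\mathcal L$ is positive. Hence $\mathcal L^{-1}C$ is similar to $\mathcal L^{-1/2}C\mathcal L^{-1/2}$ and has real eigenvalues.

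For a simple eigenvalue $\lambda$, $\partial_zF(\lambda,0)\ne0$, so the implicit function theorem gives a unique root $z(\delta)=\lambda+\mathcal O(\delta)$ in a fixed disc. This yields the uniqueness statement in the window $|(k-k_0)/\delta-\lambda|<\varepsilon$.

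The kernel vector of the perturbed matrix is $(a_1,a_3)+\mathcal O(\delta)$, by analytic dependence of the one-dimensional kernel. Reconstructing $u$ from \eqref{eq:app N2 exact resonator}--\eqref{eq:app N2 exact spacing} with $k-k_0=\mathcal O(\delta)$ gives the displayed profile with a uniform $\mathcal O(\delta)$ remainder:
\begin{itemize}
\item the resonator cosines come from the $a_j\cos$ term, since the $\delta b_j/(r_jk)$ term is $\mathcal O(\delta)$;
\item the spacing sine interpolants come from the endpoint values.
\end{itemize}

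\textbf{Case (ii).} With $I(k_0)=\{2,4\}$, use the spacing "sine amplitudes" $b_2,b_4$ as unknowns, normalized via $u'$ at the start of each spacing divided by $k$. The resonators are now nonresonant. Each resonator transmits through the matrix
\[
\begin{pmatrix}\cos & \delta\sin/(r k)\\ -rk\sin/\delta & \cos\end{pmatrix}.
\]
Solving for the small endpoint values of $u$ (of order $\delta$) gives
\[
u(x_j^-)\approx\frac{\delta}{r_jk}\bigl(\ldots\cot\ldots-\ldots\csc\ldots\bigr),
\]
in terms of the incoming and outgoing derivatives. Across each resonant spacing, $\sin(s_jk)=(-1)^{m}s_j\delta z+\dots$, which produces $s_jz\,b$ terms. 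The result is the system
\[
\bigl(D-z\mathcal S+\mathcal O(\delta)\bigr)(b_2,b_4)^\top=0,
\]
and the remainder of the argument is identical to case (i).

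\textbf{Exhaustion and the main obstacle.} Exhaustion uses Theorem~\ref{thm: k(delta) expand}. There $n=2$ and $m=0$, so both resonances are $k_0+\mathcal O(\delta)$, and their rescaled values $z$ lie in a bounded set. Any limit point must be a root of $F(\cdot,0)$. With two distinct roots, each carrying exactly one branch, both resonances are accounted for.

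The step I expect to be most delicate is the bookkeeping of signs and phases. The factors $(-1)^{m_j}$, $\e^{\pm\i\gamma L}$, and the wrap-around term on the second spacing must all be correct so that the limit matrix is exactly $C$ or $D$. Getting these right requires carefully matching the normalization of $a_3$ (respectively $b_4$) with the sign conventions of \eqref{equ: gamma def}.
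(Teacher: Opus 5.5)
Your proposal is correct and follows essentially the same route as the paper: put $k=k_0+\delta z$, eliminate the nonresonant intervals to reach the leading matching system $(C-z\mathcal L)(a_1,a_3)^\top=0$ (resp.\ $(D-z\mathcal S)(b_2,b_4)^\top=0$), obtain real eigenvalues from similarity to a Hermitian matrix, apply the implicit function theorem at a simple root, and reconstruct the mode exactly. The only difference is cosmetic: you apply the implicit function theorem to the determinant of the reduced $2\times2$ system rather than to the rescaled trace dispersion equation. You also spell out the exhaustion argument via the root count of Theorem~\ref{thm: k(delta) expand}, which the paper leaves implicit.
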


\begin{proof}
Put \(k=k_0+\delta z\).
For (i), the leading resonator endpoint values are
\(a_1\) and \((-1)^{m_1}a_3\).
Solving the nonresonant spacings gives the displayed sine
interpolants. Matching their derivatives with the resonator
propagation relations gives
\[
C(a_1,a_3)^\top=z\mc L(a_1,a_3)^\top.
\]
For (ii), the leading spacing sine amplitudes are
\(b_2\) and \((-1)^{m_2}b_4\).
Solving the nonresonant resonators in terms of their endpoint
derivatives and expanding the spacing phases gives
\[
D(b_2,b_4)^\top=z\mc S(b_2,b_4)^\top.
\]
Both effective matrices are similar to Hermitian matrices,
through \(\mc L^{1/2}\) and \(\mc S^{1/2}\), respectively,
so their eigenvalues are real.

The rescaled exact dispersion equation reduces at
\(\delta=0\), up to a nonzero factor, to the corresponding
effective characteristic equation.
At a simple root \(\lambda\), the implicit function theorem
gives a unique \(z(\delta)=\lambda+\mc O(\delta)\).
Fixing a nonzero eigenvector component gives its
\(\mc O(\delta)\) perturbation, and the exact reconstruction
yields the uniform mode expansion.
\end{proof}

\begin{remark}
Repeated effective eigenvalues require higher-order
eigenmode analysis. The results on the Zak phase based on these
expansions therefore exclude effective degeneracies at
\(\alpha=0,\pi/L\).
\end{remark}

\subsection{Three resonant phases}

\begin{proposition}\label{prop:app N2 triple}
Suppose \(k_0\ne0\) and \(|I(k_0)|=3\).
By Theorem~\ref{thm: k(delta) expand}, there are exactly
two local resonances at the \(\delta^{1/2}\) scale
and one at the \(\delta\) scale.
Their expansions are as follows.
\begin{enumerate}
\item[(i)] For $I(k_0)=\{1,2,3\}$, the two former branches are
\[
k^\pm(\delta)=k_0\pm\sqrt{(\theta_1+\theta_2)\delta}
+\mc O(\delta),\quad
u_\delta^\pm(x)=
\begin{cases}
\sin(k_0(x-x_1^+)),&x\in[x_1^+,x_2^-],\\
0,&\text{otherwise}
\end{cases}
+\mc O(\delta^{1/2}).
\]
The remaining branch satisfies
\[
k(\delta)=k_0+
\frac{2[\cos(s_2k_0)-\cos(\gamma L)]}
{(r_1^2\ell_1+r_2^2\ell_2)\sin(s_2k_0)}\delta
+\mc O(\delta^2).
\]
Its eigenmode is
\[
u_\delta(x)=
\begin{cases}
\cos(r_1k_0(x-x_1^-)),&x\in[x_1^-,x_1^+],\\
(-1)^{m_1}[\cos(k_0(x-x_1^+))+h\sin(k_0(x-x_1^+))],
&x\in[x_1^+,x_2^-],\\
(-1)^{m_1+m_2}\cos(r_2k_0(x-x_2^-)),&x\in[x_2^-,x_2^+],\\
\dfrac{(-1)^{m_1+m_2+m_3}\sin(k_0(x_3^--x))
+\e^{\i\alpha L}\sin(k_0(x-x_2^+))}{\sin(s_2k_0)},
&x\in[x_2^+,x_3^-]
\end{cases}
+\mc O(\delta),
\]
where
\[
h=\frac{r_2^2\ell_2-r_1^2\ell_1}{r_2^2\ell_2+r_1^2\ell_1}
\frac{\cos(s_2k_0)-\cos(\gamma L)}{\sin(s_2k_0)}
+\i\frac{\sin(\gamma L)}{\sin(s_2k_0)}.
\]

\item[(ii)] For $I(k_0)=\{2,3,4\}$, the two former branches are
\[
k^\pm(\delta)=k_0\pm\sqrt{(\theta_2+\theta_3)\delta}
+\mc O(\delta).
\]
Using the rescaled coordinate $Y(x)$ defined in
(\ref{equ: Y(x) def}), their eigenmodes satisfy
\[
u_\delta^\pm(x)=
\begin{cases}
s_2\sin(k_0(Y(x)-Y(x_1^+))),&x\in[x_1^+,x_2^-],\\
-s_1\sin(k_0(Y(x)-Y(x_1^+))),&x\in[x_2^+,x_3^-],\\
0,&\text{on the resonators}
\end{cases}
+\mc O(\delta^{1/2}).
\]
The remaining branch satisfies
\[
k(\delta)=k_0+
\frac{2[\cos(r_1\ell_1k_0)-\cos(\gamma L)]}
{r_1(s_1+s_2)\sin(r_1\ell_1k_0)}\delta
+\mc O(\delta^2),
\]
and has an eigenmode
\[
u_\delta(x)=
\begin{cases}
\sin(k_0(Y(x)-Y(x_1^+))),
&x\in[x_1^+,x_2^-]\cup[x_2^+,x_3^-],\\
0,&\text{on the resonators}
\end{cases}
+\mc O(\delta).
\]
The cases $\{1,3,4\}$ and $\{1,2,4\}$ follow from (i) and (ii),
respectively, by cyclic relabelling.
\end{enumerate}
\end{proposition}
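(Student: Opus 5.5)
We follow the pattern of Propositions~\ref{prop:app N2 single}--\ref{prop:app N2 opposite}. The count of branches is immediate from Theorem~\ref{thm: k(delta) expand}. For $|I(k_0)|=3$ the index set is a single cyclic path of three vertices, so $n=3$ and $m=1$. The characteristic polynomial in Lemma~\ref{lem: spec of capacitance matrix} is therefore $\lambda^2-C^{\rm per}_{I,1}\lambda$, with $M=1$.

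For $I=\{1,2,3\}$ the admissible edges are $(1,2)$ and $(2,3)$. Hence $C^{\rm per}_{I,1}=\theta_1+\theta_2$, and the nonzero eigenvalue of $\mathcal C^\gamma(k_0)$ is $\theta_1+\theta_2$. For $\{2,3,4\}$ one obtains $\theta_2+\theta_3$ in the same way. This gives the two square-root branches $k_0\pm\sqrt{(\theta_1+\theta_2)\delta}+\mathcal O(\delta)$ and leaves exactly one branch $k_0+\mathcal O(\delta)$.

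\textbf{The $\delta$-scale branch.} We set $k=k_0+\delta z$ in the exact relation $g(k;\delta)=0$ and divide by $\delta^{2}$, the leading order, since $q+2l\ge3$ forces every term to be $\mathcal O(\delta^2)$ when $q=l\delta$-scaled ($z^q\delta^{q+l}$ with $q+2l\ge 3$). To obtain the limiting equation, which should be linear in $z$, we use the matching expansion of Proposition~\ref{thm: tr_G_order}. At order $\delta^2$ the contributing terms are:
\begin{itemize}
\item[] $q=3,l=0$, which is negligible at the $\delta$ scale since it is $\delta^3$;
\item[] $q=1,l=1$ with the edge in $I$, giving coefficient $\propto z(\theta_1+\theta_2)$;
\item[] $q=1,l=1$ with one interface touching the nonresonant interval $4$;
\item[] $q=0,l=2$, with the two interfaces adjacent to interval $4$ (and the Bloch term at $\delta^N=\delta^2$).
\end{itemize}
Rather than track these combinatorially, we compute directly with the transfer matrices. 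Propagating through the resonant intervals $1,2,3$ via the expansions \eqref{equ: T_j^res expand}--\eqref{equ: T_j^sp  expand}, and through the nonresonant spacing $s_2$ exactly, reduces $\operatorname{tr}A=2\cos\alpha L$ to a $2\times2$ matching. We expect the result to be
\[
(r_1^2\ell_1+r_2^2\ell_2)\sin(s_2k_0)\,z=2[\cos(s_2k_0)-\cos\gamma L],
\]
after the signs $(-1)^{m_j}$ are absorbed into $\gamma$. Because this root is simple, the implicit function theorem gives $z(\delta)=z_0+\mathcal O(\delta)$. Case (ii) is handled in the same way, with the roles of resonator and spacing swapped. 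There the weight $r_1(s_1+s_2)$ arises because resonator $1$ is nonresonant, and the two resonant spacings couple through it.

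\textbf{Eigenmodes.} We use the exact reconstruction \eqref{eq:app N2 exact resonator}--\eqref{eq:app N2 exact spacing}. For the square-root branches in (i), normalize the derivative on spacing $1$. The resonant chain $1$--$2$--$3$ is then driven at order $\delta^{1/2}$. The leading field is the sine on the resonant spacing $s_1$, while the fields elsewhere are $\mathcal O(\delta^{1/2})$, exactly as in Proposition~\ref{prop:app N2 adjacent}.

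In (ii), the two resonant spacings are linked through resonator $2$. The matching relation $A_\gamma\bm a=c\mathcal S\bm b$, restricted to the path, forces the spacing amplitudes into the ratio $s_2:-s_1$. This is the eigenvector of the relevant $2\times2$ reduced matrix, and we write the phase through $Y$ from \eqref{equ: Y(x) def}.

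For the $\delta$-branch in (i), normalize $u(x_1^-)=1$. Resonators $1$ and $2$ then carry cosines with signs $(-1)^{m_1}$ and $(-1)^{m_1+m_2}$. Spacing $s_2$ is nonresonant, so its field is the sine interpolant between $u(x_2^+)$ and $u(x_3^-)=\e^{\i\alpha L}$. The field on the resonant spacing $s_1$ is $(-1)^{m_1}[\cos+h\sin]$. Here $h$ is fixed by the derivative jumps at order $\delta$: the average of the two resonator flux balances gives the real part and the Bloch phase gives the imaginary part. Uniformity of all remainders follows from the explicit formulas.

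\textbf{Main obstacle.} The main difficulty is the $\delta$-scale computation, namely extracting the linear coefficient and the constant term with correct signs and phases, and determining $h$. We would first attempt it by eliminating the nonresonant interval through its exact Dirichlet-to-Neumann relation, $\cot/\csc$ entries as in the matrix $C$ of Proposition~\ref{prop:app N2 opposite}. After that we would impose flux balance on the resonant block, which is effectively a single unknown, and solve.
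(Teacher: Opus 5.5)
Your route is the paper's. The square-root branches come from $C^{\mathrm{per}}_{I,1}=\theta_1+\theta_2$ (resp.\ $\theta_2+\theta_3$) via Theorem~\ref{thm: k(delta) expand}. The $\delta$-branch comes from a linear limiting equation in $z=(k-k_0)/\delta$ plus the implicit function theorem, and the modes come from exact reconstruction. The gap is that everything not already contained in that theorem is asserted rather than derived: the $\delta$-scale equations (``we expect''), the value of $h$, and the $\delta$-branch mode in (ii). The tools you cite would also not deliver these as stated.

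The expansions of $T_j^{\mathrm{Res}},T_j^{\mathrm{Sp}}$ in the proof of Theorem~\ref{thm: eigenmode n=2N} are made at $k-k_0=c\delta^{1/2}$. At $k-k_0=\delta z$ their displayed corrections vanish, and the terms you need sit inside the remainders. You must re-expand at the $\delta$ scale:
\begin{itemize}
\item $(T_j^{\mathrm{Res}})_{21}=-(-1)^{m_{2j-1}}r_j^2\ell_jk_0z\,\delta^{1/2}+\mathcal O(\delta^{3/2})$;
\item $(T_1^{\mathrm{Sp}})_{12}=(-1)^{m_2}s_1z\,\delta^{1/2}/k_0+\mathcal O(\delta^{3/2})$;
\item all diagonal entries are $\pm(1+\mathcal O(\delta^2))$, and the other off-diagonal entries are $\mathcal O(\delta^{3/2})$.
\end{itemize}
The $\delta^{-1/2}$ entry $\sin(s_2k)/(k\delta^{1/2})$ of the nonresonant $T_2^{\mathrm{Sp}}$ then pairs with the summed $(2,1)$ entry of the resonant block. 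With $\varsigma=(-1)^{m_1+m_2+m_3}$ this gives $\operatorname{tr}A=\varsigma[2\cos(s_2k_0)-(r_1^2\ell_1+r_2^2\ell_2)\sin(s_2k_0)z]+\mathcal O(\delta)$. Only then does $\varsigma\cos(\alpha L)=\cos(\gamma L)$ yield your formula.

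Your combinatorial list is also wrong in two places. A single interface touching interval $4$ leaves two resonant sine factors ($q\ge2$), so it contributes nothing at order $\delta^2$. Selecting both interfaces adjacent to $4$ leaves the factor $\eta(L_2)=\mathcal O(z)$. The constant term actually comes from the two $2$-matchings $\{1,2\},\{3,4\}$ and $\{2,3\},\{4,1\}$ of the full cycle, each equal to $4\varsigma\cos(s_2k_0)\nu_1\nu_2$ at leading order, plus the Bloch term. This term lies on $q+2l=4$, outside Proposition~\ref{thm: tr_G_order}, so it needs its own computation.

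For $h$ and for (ii) you need the endpoint bookkeeping the paper uses.

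In (i), normalize $u(x_1^-)=1$. The nonresonant spacing and the Bloch condition give $u'_-(x_1^-)/k_0=(\cos(s_2k_0)-\mathrm e^{-\mathrm i\gamma L})/\sin(s_2k_0)+\mathcal O(\delta)$. Resonator $1$ produces the flux jump $(-1)^{m_1}u'_+(x_1^+)-u'_-(x_1^-)=-r_1^2\ell_1k_0z+\mathcal O(\delta)$. Hence $h=(\cos(s_2k_0)-\mathrm e^{-\mathrm i\gamma L})/\sin(s_2k_0)-r_1^2\ell_1z$, which is the stated value once $z$ is inserted. Your ``averaging'' description is consistent with this, but it is not a derivation.

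In (ii), the relation $A_\gamma\bm a=c\mathcal S\bm b$ was proved only for $n=2N$. ``Restricting it to the path'' first requires showing that the endpoint values of the nonresonant resonator $1$ are $\mathcal O(\delta)$. This follows by inverting its exact propagation relation, using $\sin(r_1\ell_1k_0)\neq0$. That estimate yields $(k-k_0)(s_1b_1+s_2b_2)=\mathcal O((k-k_0)^2+\delta)$, and hence the ratio $s_2:-s_1$ on the $\delta^{1/2}$ branches.

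On the $\delta$-branch of (ii), the flux jump across resonator $2$ is only $\mathcal O(\delta)$, so $b_1=b_2$ at leading order. Matching across resonator $1$ together with the Bloch condition then gives $(s_1+s_2)z=2[\cos(r_1\ell_1k_0)-\cos(\gamma L)]/(r_1\sin(r_1\ell_1k_0))$. This step and the corresponding eigenmode are missing from your proposal.
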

\begin{proof}
The nonzero effective eigenvalues are
\(\theta_1+\theta_2\) in (i) and
\(\theta_2+\theta_3\) in (ii), giving the stated
\(\delta^{1/2}\)-scale branches by
Theorem~\ref{thm: k(delta) expand}. 

For (i), exact reconstruction gives the leading sine profile
on the first spacing for these two branches, with fields
of order \(\delta^{1/2}\) elsewhere.
For the \(\delta\)-scale branch, put \(k=k_0+\delta z\)
and normalize \(u_\delta(x_1^-)=1\).
Propagation gives
\(u_\delta(x_2^-)=(-1)^{m_1+m_2}+\mc O(\delta)\).
Derivative matching across both resonators gives, at
leading order,
\[
(r_1^2\ell_1+r_2^2\ell_2)z
=\frac{2[\cos(s_2k_0)-\cos(\gamma L)]}{\sin(s_2k_0)}.
\]
The second spacing and the Bloch condition determine the
left exterior derivative of the first resonator.
Propagation across that resonator then yields
\[
\frac{(-1)^{m_1}u'_{\delta,+}(x_1^+)}{k_0}
=
\frac{\cos(s_2k_0)-\e^{-\i\gamma L}}{\sin(s_2k_0)}
-r_1^2\ell_1z+\mc O(\delta).
\]
Substituting the leading root gives \(h\), and hence the
displayed mode.

For (ii), incorporate the phase signs into
\(\sin(k_0(Y(x)-Y(x_1^+)))\), and denote its two spacing
amplitudes by \(b_1,b_2\).
Propagation across the two
spacings and the second resonator gives the endpoint
matching relation $(k-k_0)(s_1b_1+s_2b_2)
=\mc O((k-k_0)^2+\delta)$.
Since the first resonator is nonresonant, its endpoint
values are \(\mc O(\delta)\).
On the \(\delta^{1/2}\)-scale branches this implies
\(s_1b_1+s_2b_2=0\) at leading order.
The normalization \(b_1=s_2,\ b_2=-s_1\) gives the stated
profile.

On the \(\delta\)-scale branch, the second resonator field
and its phase-adjusted exterior derivative change are
\(\mc O(\delta)\), so \(b_1=b_2\) at leading order.
Normalize their common value to one.
The remaining matching condition gives
\[
(s_1+s_2)z
=\frac{2[\cos(r_1\ell_1k_0)-\cos(\gamma L)]}
{r_1\sin(r_1\ell_1k_0)},
\]
and exact reconstruction gives the stated mode. The simple roots of the rescaled equations give the
resonance remainders by the implicit function theorem.
Exact reconstruction gives the uniform field remainders.
\end{proof}
\subsection{Four resonant phases}

When \(I(k_0)=\{1,2,3,4\}\) and \(k_0\ne0\), the resonance
and eigenmode expansions have already been given in
Theorems~\ref{thm: k(delta) expand} and
\ref{thm: eigenmode n=2N}, respectively.
Although \(I(0)=\{1,2,3,4\}\) as well, the eigenmode
asymptotics at zero are different.
Here \(\gamma=\alpha\), and
\[
\mc C^\alpha(0)=
\begin{pmatrix}
\dfrac{s_1^{-1}+s_2^{-1}}{r_1^2\ell_1}
&
-\dfrac{s_1^{-1}+\e^{-\i\alpha L}s_2^{-1}}{r_1^2\ell_1}
\\[6pt]
-\dfrac{s_1^{-1}+\e^{\i\alpha L}s_2^{-1}}{r_2^2\ell_2}
&
\dfrac{s_1^{-1}+s_2^{-1}}{r_2^2\ell_2}
\end{pmatrix}.
\]
For completeness, we give the corresponding mode expansion
for a simple eigenvalue. For the proof, see \cite[Lemma 4.10]{ammari2023edge}.

\begin{proposition}\label{prop:lowmodes}
Let \(\lambda\) be a simple eigenvalue of
\(\mc C^\alpha(0)\), with eigenvector
\(\bm a=(a_1,a_2)^\top\).
For \(\lambda>0\), let
\(k(\delta)=\pm\sqrt{\lambda\delta}+\mc O(\delta)\)
be the corresponding resonance given by
Theorem~\ref{thm: k(delta) expand}, and let \(u_\delta\)
be its eigenmode.
Then, after suitable normalization,
\[
u_\delta(x)=a_1V_1(x)+a_2V_2(x)+\mc O(\delta^{1/2}),
\]
uniformly on \([x_1^-,x_3^-]\), where
\[
V_1(x)=
\begin{cases}
1,&x_1^-\leq x\leq x_1^+,\\
\dfrac{x_2^--x}{s_1},&x_1^+\leq x\leq x_2^-,\\
\e^{\i\alpha L}\dfrac{x-x_2^+}{s_2},
&x_2^+\leq x\leq x_3^-,\\
0,&\text{otherwise},
\end{cases}
\qquad
V_2(x)=
\begin{cases}
\dfrac{x-x_1^+}{s_1},&x_1^+\leq x\leq x_2^-,\\
1,&x_2^-\leq x\leq x_2^+,\\
\dfrac{x_3^--x}{s_2},&x_2^+\leq x\leq x_3^-,\\
0,&\text{otherwise}.
\end{cases}
\]
For \(\lambda=0\), the same conclusion holds for the
exact constant mode at \(k=0\).
\end{proposition}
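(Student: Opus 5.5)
Since \(k=0\) gives \(\sigma_i\equiv\)-independent trivial propagation, I plan to reduce to the subwavelength regime and treat the proposition as a perturbative statement about the quasistatic problem; this is \cite[Lemma 4.10]{ammari2023edge}. I would nevertheless reprove it with the transfer-matrix machinery of Theorem~\ref{thm: eigenmode n=2N}, so that the remainders are uniform on \([x_1^-,x_3^-]\) as needed in Lemma~\ref{lem:parity asymptotics}.

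Let \(k=k(\delta)=\pm\sqrt{\lambda\delta}+\mc O(\delta)\). Write \(a_j=u_\delta(x_j^-)\) and \(b_j=u'_{\delta,-}(x_j^-)\) as in \eqref{eq:app N2 exact resonator}--\eqref{eq:app N2 exact spacing}. Across the \(j\)th resonator, \(k=\O(\delta^{1/2})\) gives
\[
c_j=a_j+\O(\delta),\qquad d_j=b_j-r_j^2\ell_jk^2\delta^{-1}a_j+\O(\delta).
\]
Here \(k^2/\delta=\lambda+\O(\delta^{1/2})\) is bounded, so the jump in exterior derivative across a resonator is \(-r_j^2\ell_j\lambda a_j\) at leading order. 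Across the \(j\)th spacing,
\[
a_{j+1}=c_j+s_jd_j+\O(\delta),\qquad b_{j+1}=d_j+\O(\delta),
\]
so each spacing carries a linear interpolant with constant slope \(d_j\).

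I would normalize so that \(\max(|a_1|,|a_2|,|b_1|,|b_2|)=1\). Set \(d_1=(a_2-a_1)/s_1\) and, by the Bloch condition, \(d_2=(\e^{\i\alpha L}a_1-a_2)/s_2\), both up to \(\O(\delta^{1/2})\). The two derivative-jump conditions then read
\[
d_j-d_{j-1}=-r_j^2\ell_j\lambda a_j ,
\]
with \(d_0=\e^{-\i\alpha L}d_2\). This is exactly \(\mc C^\alpha(0)\bm a=\lambda\bm a+\O(\delta^{1/2})\), by the definition in the displayed matrix.

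Because \(\lambda\) is simple, the spectral-projection argument of Step~2 of Theorem~\ref{thm: eigenmode n=2N} gives \(\bm a(\delta)=\eta(\delta)\bm a+\O(\delta^{1/2})\). Nondegeneracy, i.e.\ \(\bm a(\delta)\not\to0\), follows because the \(d_j\), and hence the \(b_j\), are controlled by the \(a_j\); so \(\bm a(\delta)\to0\) would contradict the normalization. Rescaling by \(\eta^{-1}\) then fixes the leading profile.

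The profile is reconstructed as follows: the resonators carry \(u_\delta=a_j+\O(\delta)\), and the spacings carry the linear interpolants between neighbouring resonator values. These are exactly \(a_1V_1+a_2V_2\), with the factor \(\e^{\i\alpha L}\) on the second spacing coming from the quasiperiodic value at \(x_3^-\).

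For \(\lambda=0\), the resonance is exactly \(k=0\) with \(\e^{\i\alpha L}=1\) by Example~\ref{ex: k=0}. The mode is then constant, and the kernel vector of \(\mc C^0(0)\) is \((1,1)^\top\), which reproduces \(V_1+V_2\equiv1\).

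The main obstacle is the order bookkeeping: the \(\delta^{-1}\) in \(d_j\) must be compensated exactly by \(k^2=\O(\delta)\). The \(\O(\delta^{1/2})\) error in \(k^2/\delta\) is what limits the final remainder to \(\O(\delta^{1/2})\).
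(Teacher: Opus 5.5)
Your argument is correct. The paper gives no argument of its own for this proposition and simply defers to \cite[Lemma 4.10]{ammari2023edge}. You instead derive it directly from the exact interval formulas \eqref{eq:app N2 exact resonator}--\eqref{eq:app N2 exact spacing}. Your derivation follows Steps 1--3 of the proof of Theorem~\ref{thm: eigenmode n=2N}, but with the scalings appropriate to $k_0=0$. Resonator values are $\mathcal O(1)$ and constant up to $\mathcal O(\delta)$, and spacing fields are linear interpolants. The $\mathcal O(1)$ jump $-r_j^2\ell_j(k^2/\delta)a_j$ of the exterior derivative comes from $k^2/\delta=\lambda+\mathcal O(\delta^{1/2})$ rather than from resonant sine factors. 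Your two jump relations reproduce both rows of the displayed $\mathcal C^\alpha(0)$, including the placement of $\e^{\mp\i\alpha L}$. The nondegeneracy step works because $b_2=d_1+\mathcal O(\delta)$ and $b_1=\e^{-\i\alpha L}d_2+\mathcal O(\delta)$ are both determined by $\bm a(\delta)$ up to $\mathcal O(\delta)$. Compared with the citation, your route makes the uniformity of the remainders on the closed cell explicit in this paper's notation, which is exactly what Lemma~\ref{lem:parity asymptotics} needs. It also shows that $k^2/\delta-\lambda$ is the only source of the $\mathcal O(\delta^{1/2})$ error, and it carries over verbatim to any $N$ for simple eigenvalues. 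The citation, by contrast, ties the result to the existing quasiperiodic capacitance theory. Two minor points. Your opening phrase about ``$\sigma_i$-independent trivial propagation'' is garbled and can be dropped. For $\lambda=0$, the identity $\e^{\i\alpha L}=1$ is best justified by Lemma~\ref{lem: spec of capacitance matrix}, since the constant term $2(1-\cos(\alpha L))/\prod_j t_j$ must vanish; Example~\ref{ex: k=0} then supplies the constant mode.
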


% \begin{proof}
% For $\lambda>0$, normalize the interface state $(u,u')$
% at one exterior interface to have norm one.
% Since $k^2=\O(\delta)$, the resonator transfer matrix
% in the exterior-flux coordinates $(u,u'/\delta)$ has
% uniformly bounded entries, as does the spacing matrix
% in $(u,u')$. Thus, propagation bounds all endpoint
% values and exterior derivatives uniformly.
% The interface conditions give $u'=\O(\delta)$ inside
% both resonators. Write $a_j(\delta)=u(x_j^-)$,
% $j=1,2$; integration of the interior equation gives
% \[
%  u'_+(x_j^+)-u'_-(x_j^-)
%  =-\frac{k^2}{\delta}\,r_j^2\ell_j a_j(\delta)
%  +\O(\delta),\qquad j=1,2.
% \]
% The derivatives on both spacings equal their linear
% interpolation slopes up to $\O(\delta)$. Thus,
% \[
%  (\mathcal C^\alpha(0)-\lambda I)\bm a(\delta)
%  =\O(\sqrt\delta),
%  \qquad
%  \bm a(\delta)=(a_1(\delta),a_2(\delta))^\top.
% \]
% The vector $\bm a(\delta)$ is bounded away from zero,
% since the spacing derivatives are determined by its
% endpoint values up to $\O(\delta)$ and the original
% state was normalized. Projection onto the simple
% eigenspace, followed by a bounded scalar normalization,
% gives $\bm a(\delta)=\bm a+\O(\sqrt\delta)$.
% Interior integration and spacing interpolation give
% the stated uniform estimate.
% For $\lambda=0$, necessarily $\alpha=0$ modulo
% $2\pi/L$; the exact mode is constant and the assertion
% follows directly.
% \end{proof}

\bibliographystyle{plain}
\bibliography{highcon}

\end{document}